\documentclass{amsart}
\usepackage{amsmath,amssymb,amscd,latexsym}

\usepackage[all]{xy}

\usepackage[usenames, dvipsnames]{color}

\newcommand{\A}{{\mathbb{A}}}
\newcommand{\C}{{\mathbb{C}}}
\newcommand{\DD}{\Delta}

\newcommand{\ED}{E_{\Dh}}
\newcommand{\Elog}{E_{\log}}
\newcommand{\EDB}{E_{\log}}
\newcommand{\FD}{F_{\Dh}}

\newcommand{\MUD}{MU_{\Dh}}
\newcommand{\MUlog}{MU_{\log}}
\newcommand{\MUDB}{MU_{\log}}

\newcommand{\Lee}{\mathbb{L}}
\newcommand{\Q}{{\mathbb{Q}}}
\newcommand{\Pro}{{\mathbb{P}}}
\newcommand{\Sph}{{\mathbb{S}}}
\newcommand{\R}{{\mathbb{R}}}
\newcommand{\Z}{{\mathbb{Z}}}
\newcommand{\an}{\mathrm{an}}

\newcommand{\clH}{\mathrm{cl}_{H}}
\newcommand{\clHD}{\mathrm{cl}_{H_{\Dh}}}
\newcommand{\clMU}{\mathrm{cl}_{MU}}
\newcommand{\clMUD}{\mathrm{cl}_{\MUlog}}

\newcommand{\Ev}{\mathrm{Ev}}
\newcommand{\Hdg}{\mathrm{Hdg}}
\newcommand{\HdgE}{\mathrm{Hdg}_{E}}
\newcommand{\HdgMU}{\mathrm{Hdg}_{MU}}

\newcommand{\inj}{\mathrm{inj}}

\newcommand{\nomo}{\mathrm{nomo}}

\newcommand{\proj}{\mathrm{proj}}
\newcommand{\pt}{\mathrm{pt}}

\newcommand{\sing}{\mathrm{sing}}
\newcommand{\Sing}{\mathrm{Sing}_*}

\newcommand{\Spec}{\mathrm{Spec}\,}

\newcommand{\colim}{\operatorname*{colim}}
\newcommand{\hocolim}{\operatorname*{hocolim}}
\newcommand{\holim}{\operatorname*{holim}}
\newcommand{\Hom}{\mathrm{Hom}}

\newcommand{\Homhs}{{\mathcal Hom}_{\Sph}}

\newcommand{\Imm}{\mathrm{Im}\,}
\newcommand{\Ker}{\mathrm{Ker}\,}

\newcommand{\Map}{\mathrm{Map}}

\newcommand{\Sm}{\mathbf{Sm}}
\newcommand{\Smc}{\Sm_{\C}}
\newcommand{\Smb}{\overline{\Smc}}
\newcommand{\Smcnis}{\Sm_{\C,\mathrm{Nis}}}

\newcommand{\Manc}{\mathbf{Man}_{\C}}

\newcommand{\Pre}{\mathbf{Pre}}
\newcommand{\sPre}{\mathbf{sPre}}
\newcommand{\sPrep}{\mathbf{sPre}_*}
\newcommand{\hosPre}{\mathrm{ho}\sPre}

\newcommand{\SpsPre}{\sSp(\sPrep)}
\newcommand{\hoSpsPre}{\mathrm{ho}\sSp(\sPrep)}
\newcommand{\SpsPreman}{\sSp(\sPrep(\Manc))}
\newcommand{\hoSpsPreman}{\mathrm{ho}\SpsPreman}
\newcommand{\SpsPrenis}{\sSp(\sPrep(\Smcnis))}
\newcommand{\hoSpsPrenis}{\mathrm{ho}\SpsPrenis}
\newcommand{\HCAlg}{H\C-\mathbf{Alg}}

\newcommand{\Predga}{\mathbf{PreDGA}_{\C}}

\newcommand{\sS}{\mathbf{sS}}
\newcommand{\hosS}{\mathrm{ho}\sS}
\newcommand{\sSp}{\mathrm{Sp}^{\Sigma}}

\newcommand{\sSpt}{\mathrm{Sp}_{\mathrm{Top}}^{\Sigma}}
\newcommand{\hosSpt}{\mathrm{ho}\sSpt}
\newcommand{\Top}{\mathbf{Top}}
\newcommand{\hoTop}{\mathrm{ho}\Top}
\newcommand{\Ch}{{\mathcal C}}
\newcommand{\Dh}{{\mathcal D}}
\newcommand{\Eh}{{\mathcal E}}

\newcommand{\Fh}{{\mathcal F}}

\newcommand{\Gh}{{\mathcal G}}

\newcommand{\Oh}{{\mathcal O}}

\newcommand{\Sh}{{\mathcal S}}

\newcommand{\Uh}{{\mathcal U}}
\newcommand{\Vh}{{\mathcal V}}
\newcommand{\Wh}{\mathcal{W}}
\newcommand{\Xh}{\mathcal{X}}

\newcommand{\Yh}{\mathcal{Y}}
\newcommand{\Zh}{\mathcal{Z}}
\newcommand{\oX}{\overline{X}}
\newcommand{\oY}{\overline{Y}}

\newcommand{\into}{\hookrightarrow}

\newtheorem{theorem}{Theorem}[section]
\newtheorem{lemma}[theorem]{Lemma}
\newtheorem{prop}[theorem]{Proposition}
\newtheorem{cor}[theorem]{Corollary}
\theoremstyle{definition}
\newtheorem{defn}[theorem]{Definition}

\newtheorem{example}[theorem]{Example}

\newtheorem{remark}[theorem]{Remark}


\begin{document}
\title{Hodge filtered complex bordism}
\author{Michael J. Hopkins}
\address{Department of Mathematics, Harvard University, Cambridge, MA 02138, USA}
\email{mjh@math.harvard.edu}
\author{Gereon Quick}
\thanks{Both authors were supported in part by DARPA under HR0011-10-1-0054-DOD35CAP. The first author was supported in part by the National Science Foundation under DMS-0906194. The second author was supported by  the German Research Foundation (DFG)-Fellowship QU 317/1}
\address{Mathematisches Institut, WWU M\"unster, Einsteinstr. 62, 48149 M\"unster, Germany}
\email{gquick@math.uni-muenster.de}
\date{}
\begin{abstract}
We construct Hodge filtered cohomology groups for complex manifolds that combine the topological information of generalized cohomology theories with geometric data of Hodge filtered holomorphic forms. This theory provides a natural generalization of Deligne cohomology. For smooth complex algebraic varieties, we show that the theory satisfies a projective bundle formula and $\A^1$-homotopy invariance. Moreover, we obtain transfer maps along projective morphisms. 
\end{abstract}

\maketitle

\section{Introduction}
%
%
For a complex manifold, Deligne cohomology is an elegant way to combine the topological information of integral cohomology with the geometric data of holomorphic forms. For a given $p$, the Deligne cohomology group $H_{\Dh}^n(X;\Z(p))$ of a complex manifold $X$ is defined to be the $n$th hypercohomology of the Deligne complex $\Z_{\Dh}(p)$ 
\[
0 \to \Z \xrightarrow{(2\pi i)^p} \Oh_X \to \Omega^1_X \xrightarrow{d} \ldots \xrightarrow{d} \Omega^{p-1}_X \to 0
\]
of sheaves over $X$, where $\Omega_X^k$ denotes the sheaf of holomorphic $k$-forms. 

Deligne cohomology is an important tool for many fundamental questions in algebraic, arithmetic and complex geometry. Let us just highlight one of the algebraic geometric applications of Deligne cohomology. Let $X$ be a smooth projective complex variety. The associated complex manifold is an example of a compact K\"ahler manifold. Let $CH^pX$ be the Chow group of codimension $p$ algebraic cycles modulo rational equivalence. There is the classical cycle map 
\[
\clH \colon CH^pX \to H^{2p}(X;\Z)
\]
from Chow groups to the integral cohomology of the associated complex manifold of $X$. This map sends an irreducible subvariety $Z\subset X$ of codimension $p$ in $X$ to the Poincar\'e dual of the fundamental class of a resolution of singularities of $Z$. The so defined cohomology class of $Z$ is an integral Hodge class, i.e. lies in the subgroup 
\[
\Hdg^{2p}(X) \subset H^{2p}(X;\Z)
\]
of integral cohomology classes whose images in $H^{2p}(X;\C)$ lie in the step $F^pH^{2p}(X; \C)$ 
of the Hodge filtration of the de Rham cohomology. 

Now Deligne cohomology groups enter the picture. Deligne showed that for every given $p$ the group $H_{\Dh}^{2p}(X;\Z(p))$ sits in a short exact sequence
\begin{equation}\label{sesDeligneintro}
0\to J^{2p-1}(X) \to H_{\Dh}^{2p}(X; \Z(p)) \to \Hdg^{2p}(X)\to 0
\end{equation}
where the group on the left hand side is a compact complex torus, the $p$th intermediate Jacobian of $X$. 
Moreover, Deligne constructed a cycle map 
\[
\clHD \colon CH^pX \to H_{\Dh}^{2p}(X;\Z(p)).
\]
Together with the short exact sequence (\ref{sesDeligneintro}) this provided an algebraic definition of Griffiths's Abel-Jacobi homomorphism from cycles homologically equivalent to zero, i.e. those which are mapped to zero under $\clH$, to the intermediate Jacobian $J^{2p-1}(X)$ (see \cite{zeinzucker}). Thus Deligne groups play an important role in the study of the cycle map, the Abel-Jacobi map and, in particular, the Griffiths group of smooth projective complex varieties, i.e. the quotient group of cycles homologically equivalent to zero modulo a potentially weaker relation, called algebraic equivalence. 

More recently Totaro showed that the Griffiths group of a complex variety can be analyzed using the purely topological information of its complex cobordism groups. In \cite{totaro} he proved that the map $\clH$ factors through a cycle map 
\[
\clMU \colon CH^pX \to (MU^{*}(X)\otimes_{MU^*}\Z)^{2p}
\]
for any smooth projective complex variety $X$; in fact, Totaro showed
a more general result for any complex variety in terms of the
corresponding bordism quotient groups.  Since the canonical map
$\theta \colon MU^{2p}(X)\otimes_{MU^*}\Z \to H^{2p}(X;\Z)$ is an
isomorphism after tensoring with $\Q$, this factorization is a torsion
phenomenon. But in general, the kernel of $\theta$ can be nontrivial.
Generalizing work of Atiyah and Hirzebruch, Totaro constructed (using
Godeaux-Serre varieties)
elements in the kernel of $\theta$ that lie in the image of $\clMU$.
In this way he found new examples of nontrivial elements in the
Griffiths group of Godeaux-Serre varieties. In \cite{cycles}, the
second author extended Totaro's idea for the cycle map to varieties
over algebraically closed fields of positive characteristic.

Now given that Deligne cohomology and complex cobordism are useful
tools for the understanding of the cycle map and also for the
Abel-Jacobi map, the question arises whether it is possible to combine
these two approaches.  In other words, is it
possible to combine the geometric information of differential forms on
a complex manifold with the topological information of complex
cobordism groups which carry richer data than cohomology groups? The
purpose of this paper is to give a positive answer to this question by
constructing a natural generalization of Deligne cohomology via
complex cobordism and to set the stage for its applications, in
particular with a view towards a new Abel-Jacobi map.

The idea for the construction is similar to the one for generalized differential cohomology theories in \cite{hs}. Let $X$ be a complex manifold. The Deligne complex $\Z_{\Dh}(p)$ is quasi-isomorphic to the homotopy pullback of the diagram of sheaves of complexes 
\[
\xymatrix{
 & \Z \ar[d] \\
 \Omega_X^{*\geq p}\ar[r] & \Omega_X^*}
 \]
where $\Omega^*_X$ denotes the complex of holomorphic forms on $X$ and $\Omega^{*\geq p}_X$ denotes the subcomplex of forms of degree at least $p$. We would like to replace the complex $\Z$ which contributes the singular cohomology of $X$ with a spectrum representing a more general cohomology theory. In particular, we would like to use the Thom spectrum $MU$ of complex bordism.

In order to make this work, we have to throw the latter chain complexes and $MU$ in a common category. We do this by adding a simplicial direction and considering simplicial sets rather than chain complexes. Moreover, in order to have a good homotopy theory for complex manifolds as part of the team we consider presheaves of spectra on the site of complex manifolds with open coverings. In order to illustrate the basic pattern for the construction, we will start in the next section with a reformulation of Deligne cohomology in terms of simplicial presheaves before we move on to the generalizations. 

The technical details for the general case are a bit more involved. In particular, to obtain a nice Eilenberg-MacLane spectrum functor from presheaves of differential graded algebras to presheaves of symmetric spectra which preserves the product structure requires more machinery on the pitch than one would like. The construction is based on the idea of Brown in \cite{brown} to use sheaves of spectra to define generalized sheaf cohomology. 

Once our roster is complete, we can construct for every integer $p$ and any symmetric spectrum $E$ such that $\pi_jE\otimes \C$ vanishes if $j$ is odd, a new presheaf of symmetric spectra $\ED(p)$ as a suitable homotopy pullback and define the Hodge filtered complex bordism groups $\ED^n(p)(X)$ of $X$ as the homotopy groups of $\ED(p)$.  

Given a map of symmetric spectra $E \to H\Z$ from $E$ to the integral Eilenberg-MacLane spectrum, we obtain a natural homomorphism
\begin{equation}\label{MUDtoHDintro}
\ED^n(p)(X) \to H^n_{\Dh}(X;\Z(p))
\end{equation}
between Hodge filtered $E$-cohomology and Deligne cohomology groups of any complex manifold $X$. For $E=MU$, we will show that Hodge filtered complex bordism has a multiplicative structure and that map (\ref{MUDtoHDintro}) respects the ring structures on both sides.

One motivation for this project is the result that for a compact K\"ahler manifold $X$ the diagonal Hodge filtered cohomology groups $\ED^{2p}(p)(X)$ sit in an exact sequence 
\begin{equation}\label{sesMUintro}
0\to J_E^{2p-1}(X) \to \ED^{2p}(p)(X) \to \HdgE^{2p}(X)\to 0.
\end{equation}
The group on the right is the subset of elements in $E^{2p}(X)$ that map to Hodge classes in cohomology. The left hand group is an analogue of the intermediate Jacobian and carries the structure of a complex torus. Moreover, a map $E\to H\Z$ induces a natural map of short exact sequences from \eqref{sesMUintro} to \eqref{sesDeligneintro}. 

We have written the first five sections of this paper in a way that applies to complex manifolds.	In the final section we specialize to the case of smooth algebraic varieties and modify our construction in order to take mixed Hodge structures into account. Following Deligne and Beilinson, we replace the sheaves of holomorphic forms occurring in the definition of $\ED(p)$ with the sheaves of holomorphic forms having logarithmic poles to form $E_{\log}(p)$. These "logarithmic" generalized cohomology groups are finitely generated if the groups $E_\ast (\text {pt})$ are, and are $ \A^1$-invariant. 

In the case that $E$ is the topological Thom spectrum $MU$, we obtain a projective bundle formula and transfer maps for projective morphisms. This transfer structure provides a new cycle map that associates to a smooth irreducible subvariety $Z$ of codimension $p$ of a smooth projective complex variety $X$ an element in $\MUlog^{2p}(p)(X)$. This also induces a generalized Abel-Jacobi homomorphism on smooth cycles that map to zero in $\HdgMU^{2p}(X)$, a subgroup of the cycles homologically equivalent to zero. More generally, for every smooth quasi-projective complex variety $X$, we obtain a natural ring homomorphism from algebraic cobordism of Levine and Morel \cite{lm} to Hodge filtered complex bordism
\[
\Omega^*(X) \to \MUlog^{2*}(*)(X).
\] 

Finally, by \cite{lm}, there is a natural isomorphism 
$CH^* \cong \Omega^*\otimes_{\Lee^*}\Z$ of oriented cohomology theories on $\Smc$. This implies that there is a natural homomorphism
\[
CH^*X \to \MUlog^{2*}(*)(X)\otimes_{MU^*}\Z.
\]

We conclude the paper with two examples of elements in algebraic
cobordism that map to zero in the Chow ring and in complex cobordism
but remain non-zero in $\MUlog{2*}(*)(X)$.

The authors wish to thank Clark Barwick, H\'el\`ene Esnault, Marc
Levine, Burt Totaro, and Claire Voisin for very helpful conversations
and comments. We would also like to thank the anonymous referee for
many very helpful comments and suggestions to improve the paper.

\section{Deligne cohomology in terms of simplicial presheaves}

We would like to modify the definition of Deligne cohomology groups in a way that allows a generalization. The new theory should fit in an exact sequence similar to (\ref{sesDeligneintro}) with the role of singular cohomology replaced by a generalized cohomology theory. 

The starting observation is that we can consider the Deligne complex $\Z_{\Dh}(p)$ also as a homotopy pullback of complexes. Up to quasi-isomorphism $\Z_{\Dh}(p)$ fits into the homotopy pullback square of complexes
\begin{equation}\label{pullbackzdp}
\xymatrix{
\Z_{\Dh}(p) \ar[d] \ar[r] & \Z \ar[d]\\
\Omega_X^{*\geq p} \ar[r] & \Omega_X^*}
\end{equation}
where $\Omega_X^*$ denotes the full complex of holomorphic differential forms and $\Omega_X^{*\geq p}$ is the truncated subcomplex of $\Omega_X^*$ of forms of degree at least $p$. The right hand vertical map in (\ref{pullbackzdp}) is given by $(2\pi i)^p$ times the canonical inclusion $\Z \into \C$. Taking the homotopy pullback along the lower horizontal map corresponds to cutting the complex $0 \to \Z \to \Omega_X^*$ at degree $p$. 

Now we would like to substitute the sheaf of chain complexes $\Z$ by a different player. In order to set  the stage for this replacement we consider diagram (\ref{pullbackzdp}) in the world of simplicial presheaves on complex manifolds. 
%

\subsection{Simplicial presheaves}\label{section2.1}

Let $\Manc$ be the category of complex manifolds and holomorphic maps. We consider it as a site with the Grothendieck topology defined by open coverings. 

Let $\Pre$ be the category of presheaves of sets on the site $\Manc$. We denote by $\sPre$ the category of simplicial objects in $\Pre$, or equivalently the category of presheaves of simplicial sets on $\Manc$. Sending an object $U$ of $\Manc$ to the presheaf it represents defines a fully faithful embedding $\Manc \into \Pre$. Since any presheaf defines an object in $\sPre$ of simplicial dimension zero, we can also embed $\Manc$ into $\sPre$. On the other hand every simplicial set defines a simplicial presheaf. For example, the simplicial circle $S^1=\Delta[1]/\partial\Delta[1]$ can be considered as a simplicial presheaf by sending any complex manifold $U$ to $S^1$.


Let $f \colon \Xh \to \Yh$ be a morphism of simplicial presheaves. Then $f$ is called 
\begin{itemize}
\item a (local) weak equivalence if it induces stalkwise a weak equivalence of simplicial sets at every point of $\Manc$;
\item an injective cofibration if it induces a monomorphism of simplicial sets $\Xh(U) \to \Yh(U)$ for every object $U$ of $\Manc$;
\item a global fibration if it has the right lifting property with respect to any injective cofibration which is also a weak equivalence.
\end{itemize}

These classes of morphisms define a closed proper cellular simplicial model structure on $\sPre$ (see \cite[Theorem 2.3]{jardine}). We denote its homotopy category by $\hosPre$.

There are several other different interesting model structures on $\sPre$. In the local projective model structure the weak equivalences are again the local, i.e. stalkwise, weak equivalences. A morphism $f \colon \Xh \to \Yh$ of simplicial presheaves is called  
\begin{itemize}
\item a projective cofibration if it has the left lifting property with respect to maps which induce a trivial cofibration of simplicial sets $\Xh(U) \to \Yh(U)$ for every object $U$;
\item a local projective fibration if it has the right lifting property with respect to every projective cofibration which is also a local weak equivalence.
\end{itemize}

The classes of local weak equivalences, projective cofibrations and local projective fibrations provide $\sPre$ with the structure of a proper cellular simplicial model category (see \cite{blander} and \cite{dugger}). 

\begin{theorem}\label{comparemodel} {\rm (\cite{blander}, \cite{dugger}, \cite{dhi})} 
The identity functor is a left Quillen equivalence from the local projective model structure to the local injective model structure on $\sPre$.  
\end{theorem}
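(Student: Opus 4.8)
The plan is to exploit the fact that the two model structures on $\sPre$ share exactly the same class of weak equivalences, namely the local (stalkwise) weak equivalences. Consequently the identity functor, viewed as a left adjoint $\id\colon \sPre_{\proj} \to \sPre_{\inj}$ with the identity as its right adjoint, will be a Quillen equivalence as soon as it is a left Quillen functor: a Quillen adjunction between two structures with a common class $W$ of weak equivalences is automatically a Quillen equivalence, since both derived functors induce the identity on the common localization. So the real content is to check that $\id$ is left Quillen, i.e.\ that every projective cofibration is an injective cofibration and every trivial projective cofibration is a trivial injective cofibration.

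First I would reduce the trivial-cofibration condition to the cofibration condition. Because the two structures have the same weak equivalences, a trivial projective cofibration is a projective cofibration that is a local weak equivalence; once we know such a map is a monomorphism, it is automatically a trivial injective cofibration. Hence it suffices to prove that every projective cofibration is a monomorphism. For this I would use that the local projective structure is cofibrantly generated, with generating cofibrations the maps $(\partial\Delta[n] \into \Delta[n]) \otimes U$ indexed by $n \geq 0$ and representables $U$ in $\Manc$. Each such generator is levelwise a monomorphism of simplicial sets, hence a monomorphism in $\sPre$. Since the class of monomorphisms is closed under pushout, transfinite composition, and retract, and since every projective cofibration is a retract of a transfinite composite of pushouts of the generators, every projective cofibration is a monomorphism, i.e.\ an injective cofibration.

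Combining these two observations shows that $\id\colon \sPre_{\proj}\to\sPre_{\inj}$ is a left Quillen functor, and the opening remark then upgrades it to a left Quillen equivalence: on cofibrant objects the derived unit is a fibrant replacement computed in the injective structure, on fibrant objects the derived counit is a cofibrant replacement computed in the projective structure, and both of these are local weak equivalences because they lie in the shared class $W$. I expect the only genuinely technical point to be the explicit identification of the generating projective cofibrations and the verification that they are levelwise injective; everything after that is a formal consequence of the saturation properties of monomorphisms together with the coincidence of the two classes of weak equivalences.
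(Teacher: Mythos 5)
Your argument is correct and is precisely the standard one: the paper itself gives no proof of Theorem \ref{comparemodel}, deferring to the cited references of Blander, Dugger, and Dugger--Hollander--Isaksen, and the proof found there is exactly your reduction --- the generating projective cofibrations are objectwise monomorphisms, monomorphisms are saturated, so the identity is left Quillen, and since the two structures share the class of local weak equivalences the Quillen adjunction is automatically a Quillen equivalence. No gaps; the only point worth stating explicitly is that the local projective structure, being a left Bousfield localization of the objectwise projective structure, has the same cofibrations as the latter, which is what licenses your description of the generating cofibrations.
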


There is a third class of fibrations given by a local condition that is in fact easier to check and therefore convenient for applications. A map $f$ of simplicial presheaves is called a local fibration (resp. trivial local fibration) if the map of stalks $f_x$ is a Kan fibration (resp. Kan fibration and weak equivalence) of simplicial sets for every point $x$ of $\Manc$. Let $M$ be a complex manifold and let $U_{\bullet}$ be a simplicial presheaf on $\Manc$ with an augmentation map $U_{\bullet} \to M$ in $\sPre$. This map is called a {\it hypercover} of $M$ if it is a trivial local fibration and each $U_n$ is a coproduct of representables.

An interesting feature of the local projective structure is that one can detect its fibrant objects by a nice criterion (see \cite{dugger} and \cite{dhi}). Let $M$ be a complex manifold and let $U_{\bullet} \to M$ a hypercover of $M$. A simplicial presheaf $\Xh$ is said to satisfy descent for the hypercover $U_{\bullet} \to M$ if the natural map 
\[
\Xh(M) \to \holim_n\Xh(U_n)
\]
is a weak equivalence of simplicial sets. A simplicial presheaf $\Xh$ is fibrant in the local projective model structure if $\Xh(M)$ is a Kan complex for every object $M$ of $\Manc$ and if it satisfies descent for all hypercovers. We will make use of this criterion when we show that the singular functor is a fibrant replacement in $\sPre$.

Recall that a simplicial presheaf $\Xh$ is called locally fibrant if the unique map from $\Xh$ to the final object is a local fibration. 
For locally fibrant simplicial presheaves $\Xh$ and $\Yh$, let $\pi(\Xh,\Yh)$ be the quotient of $\Hom_{\sPre}(\Xh,\Yh)$ with respect to the equivalence relation generated by simplicial homotopies. The set $\pi(\Xh,\Yh)$ is called the set of simplicial homotopy classes of morphisms from $\Xh$ to $\Yh$. For a simplicial presheaf $\Xh$, denote by $\pi Triv/\Xh$ the category whose objects are the trivial local fibrations to $\Xh$ and whose morphisms are simplicial homotopy classes of morphisms which fit  in the obvious commutative triangle over $\Xh$. A crucial fact about the category $\pi Triv$ is that it approximates the homotopy category $\hosPre$ in the following sense.

\begin{prop}\label{verdierpure} {\rm (\cite[Proof of Theorem 2]{brown}, \cite[Proposition 2.1.13]{mv} and \cite[p. 55]{jardine})}
For any simplicial presheaves $\Xh$ and $\Yh$ with $\Yh$ locally fibrant, the canonical map
\[
\pi'(\Xh,\Yh):=\colim_{p \colon \Xh' \to \Xh \in \pi Triv/\Xh} \pi(\Xh',\Yh) \to \Hom_{\hosPre}(\Xh,\Yh)
\]
is a bijection.
\end{prop}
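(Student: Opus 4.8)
The plan is to recognize this as an instance of the Verdier hypercovering theorem and to prove it via Brown's calculus of right fractions. The engine is the observation that the locally fibrant simplicial presheaves, with the local fibrations as fibrations and the local weak equivalences as weak equivalences, form a \emph{category of fibrant objects} in Brown's sense. All of the required axioms are inherited stalkwise from the Kan--Quillen structure on simplicial sets: the stalk functors preserve finite limits and jointly detect fibrations and weak equivalences, so that pullbacks of (trivial) local fibrations are again (trivial) local fibrations, the classes satisfy two-out-of-three, and a functorial path object for a locally fibrant $\Yh$ is furnished by the internal mapping object $\Yh^{\Delta[1]}$ together with the evaluation maps $\Yh^{\Delta[1]} \to \Yh \times \Yh$. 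It is exactly at this point that local fibrancy of $\Yh$ is needed: this evaluation map is a local fibration only when $\Yh$ is locally fibrant, and this is what makes simplicial homotopy into an equivalence relation compatible with composition, so that $\pi(\Xh',\Yh)$ is well defined. One also records that localizing the locally fibrant objects at the local weak equivalences recovers $\hosPre$, since every simplicial presheaf admits a local weak equivalence to a locally fibrant one.

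With the category of fibrant objects in place, Brown's theorem provides a calculus of right fractions for the localization at the local weak equivalences: every morphism $\Xh \to \Yh$ in $\hosPre$ is represented by a span
\[
\Xh \xleftarrow{\;s\;} \Xh' \xrightarrow{\;f\;} \Yh
\]
in which $s$ is a trivial local fibration, and the morphism it represents is $f \circ s^{-1}$. This is precisely the datum of an element of $\pi(\Xh',\Yh)$ lying over the object $s \in \pi Triv/\Xh$. Pullback-stability of trivial local fibrations shows that any two resolutions $s_1,s_2$ admit a common refinement, namely the fibre product $\Xh'_1 \times_{\Xh} \Xh'_2 \to \Xh$, so that $\pi Triv/\Xh$ is cofiltered; precomposition therefore makes $s \mapsto \pi(\Xh',\Yh)$ into a functor over which the colimit $\pi'(\Xh,\Yh)$ is filtered, and the assignment $[f] \mapsto f \circ s^{-1}$ is well defined on this colimit because each refinement map is itself a local weak equivalence. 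This is the canonical comparison map.

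It remains to check bijectivity. Surjectivity is immediate from the fraction calculus, since every morphism in $\Hom_{\hosPre}(\Xh,\Yh)$ is represented by some such span and hence lies in the image. For injectivity, suppose two spans $(s_1,f_1)$ and $(s_2,f_2)$ define the same morphism in $\hosPre$; by the fraction calculus they are dominated by a common refinement $\Xh'' \to \Xh$ on which the two forward composites become right-homotopic, i.e. equal in $\pi(\Xh'',\Yh)$, and therefore already agree in the filtered colimit. Filteredness of the index category is used twice here, once to produce the common refinement and once to absorb the homotopy witnessing the coincidence.

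The main obstacle is the bookkeeping in the calculus of fractions rather than any single hard estimate. Concretely, the delicate point is that the simplicial homotopy relation underlying $\pi(-,\Yh)$ is only guaranteed to be compatible with composition and to detect equality in $\hosPre$ \emph{after} passing to trivial-local-fibration refinements; arranging the path-object homotopies so that the equivalence relation on spans collapses to honest simplicial homotopy on a common refinement is the technical heart of the argument. This is exactly the information encoded by the cofilteredness of $\pi Triv/\Xh$, and once it is organized, everything else is a stalkwise transport of the corresponding facts for Kan complexes.
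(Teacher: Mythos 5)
Your proposal is correct and follows the same route as the sources the paper cites for this statement (the paper itself offers no proof beyond the references): Brown's Theorem 2 is proved exactly by exhibiting the locally fibrant objects as a category of fibrant objects, invoking the calculus of right fractions from his Theorem 1, and using pullback-stability of trivial local fibrations to make $\pi Triv/\Xh$ cofiltered. The stalkwise verification of the axioms and the path object $\Yh^{\Delta[1]}$ are the standard ingredients, so there is nothing to add.
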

%

We would like to be able to recover in $\hosPre$ homotopy classes of maps between topological spaces. But the full embedding that sends an object of $\Manc$ to the presheaf in simplicial dimension zero that it represents is very rigid. For we have an isomorphism
\[
\Hom_{\Manc}(X,Y) \cong \Hom_{\hosPre}(X,Y),
\]
i.e. the functor $\Manc \to \hosPre$ is still a full embedding. The simplicial direction is crucial and there are at least two ways to use it. The first one is the singular functor of Suslin and Voevodsky \cite{sv} in the topological context.

Let $\DD^n$ be the standard topological $n$-simplex 
\[
\DD^n =\{ (t_0,\ldots, t_n) \in \R^{n+1} | 0 \leq t_j \leq 1, \sum t_j =1 \}.
\]
For topological spaces $Y$ and $Z$, the singular function complex $\Sing(Z,Y)$ is the simplicial set whose $n$-simplices are continuous maps
\[
f \colon Z \times \DD^n \to Y.
\]
We denote the simplicial presheaf  
\[
Z\mapsto \Sing(Z,Y)=:\Sing Y(Z)
\]
by $\Sing Y$. Note that $\Sing Y$ is a simplicial presheaf on our site. The singular simplicial set of a topological space $Y$ is denoted by $\Sh(Y)\in \sS$. The geometric realization of a simplicial set $K$ is denoted by $|K|$.

For topological spaces $Y$ and $Z$, we denote by $Y^Z$ the topological space of continuous maps $Z\to Y$ with the compact-open topology. Then the adjunction of taking products and mapping spaces yields a canonical isomorphism of simplicial sets
\[
\Sing(Z,Y) = \Sh(Y^Z).
\]
\begin{lemma}\label{singfibrant}
Let $Y$ be a $CW$-complex. The simplicial presheaf $\Sing Y$ is a fibrant object in the local projective model structure on $\sPre$. 
\end{lemma}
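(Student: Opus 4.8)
The plan is to verify, for $\Sing Y$, the criterion for fibrancy in the local projective model structure recalled above: I would check that $\Sing Y(M)$ is a Kan complex for every complex manifold $M$, and that $\Sing Y$ satisfies descent for every hypercover. The first condition is immediate. By the adjunction isomorphism $\Sing(M,Y)=\Sh(Y^M)$ noted just before the statement, the simplicial set $\Sing Y(M)$ is the singular complex of a topological space, and singular complexes are always Kan. Thus the entire content of the lemma lies in the descent condition, and this is where the hypothesis that $Y$ is a $CW$-complex will be used.

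For descent, fix a hypercover $U_\bullet \to M$ with each $U_n$ a coproduct of representables $\coprod_i V_{n,i}$. First I would rewrite the target of the descent map. By the Yoneda lemma and the fact that $\Sing Y$ carries coproducts to products, $\Sing Y(U_n)=\prod_i \Sh(Y^{V_{n,i}})=\Sh(Y^{U_n})$, where on the right $U_n$ denotes the underlying disjoint union of manifolds. Since the singular functor $\Sh$ carries homotopy limits of spaces to homotopy limits of simplicial sets, and the mapping-space functor $Y^{(-)}$ turns homotopy colimits into homotopy limits, there is a natural weak equivalence $\holim_n \Sing Y(U_n)\simeq \Sh\bigl(Y^{|U_\bullet|}\bigr)$, where $|U_\bullet|$ is the geometric realization of the simplicial space underlying $U_\bullet$, a model for $\hocolim_n U_n$. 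Under this identification the descent map is induced by the augmentation $|U_\bullet|\to M$.

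The key geometric input is the theorem on topological hypercovers (Dugger--Isaksen): the augmentation $|U_\bullet|\to M$ is a weak equivalence of topological spaces. I expect this to be the main obstacle, since it requires translating the descent-theoretic notion of hypercover on the site $\Manc$ (a trivial local fibration with representable levels) into a genuine topological hypercover of the underlying spaces, and then checking that the simplicial space is suitably cofibrant so that its realization really computes $\hocolim_n U_n$.

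Granting this, it remains to see that the functor $\Sing(-,Y)=\Sh(Y^{(-)})$ sends the weak equivalence $|U_\bullet|\to M$ to a weak equivalence of singular mapping complexes. This is precisely where the hypothesis that $Y$ is a $CW$-complex enters: for a target of $CW$-homotopy type the functor $[-,Y]$, and more generally the full singular mapping complex $\Sh(Y^{(-)})$, is homotopy invariant, so that weak equivalences of sources induce weak equivalences. Combining this with the identification of the second step yields the desired weak equivalence $\Sing Y(M)\xrightarrow{\sim}\holim_n \Sing Y(U_n)$, establishing descent and hence fibrancy of $\Sing Y$.
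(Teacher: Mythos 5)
Your proposal is correct and follows essentially the same route as the paper: objectwise Kan-ness is immediate, descent is reduced via the Dugger--Isaksen theorem on topological hypercovers to the statement that $\hocolim_n U_n \to M$ is a weak equivalence, and the $CW$ hypothesis on $Y$ guarantees that $\Sh(Y^{(-)})$ converts this into a weak equivalence of mapping complexes. The only difference is that you spell out the identification $\holim_n \Sing Y(U_n)\simeq \Sh(Y^{\hocolim_n U_n})$ more carefully than the paper, which simply asserts the equality.
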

\begin{proof}
Since the singular complex of any topological space is a Kan complex, $\Sing Y$ is objectwise fibrant. By \cite[Corollary 7.1]{dhi}, it remains to show that $\Sing Y$ satisfies descent for all hypercovers. So let $X$ be a complex manifold and $U_{\bullet} \to X$ a hypercover. By \cite[Theorem 1.3]{di}, the induced map $\hocolim_n U_n \to X$ is a weak equivalence. Since $Y$ is fibrant and $\Sh$ preserves weak equivalences, the induced map 
\[
\Sing Y(X) \to \Sing Y(\hocolim_n U_n)= \holim_n \Sing Y(U_n)
\]
is a weak equivalence too. Hence $\Sing Y$ satisfies descent for every hypercover and is fibrant.
\end{proof}

\begin{prop}\label{constant}
Let $K$ be a simplicial set. 
The natural map  
\[
K \to \Sing |K|
\]
is a weak equivalence of simplicial presheaves.  
\end{prop}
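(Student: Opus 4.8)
The plan is to show directly that the natural map is a local weak equivalence, i.e. that it induces a weak equivalence of simplicial sets on stalks at every point $x$ of every complex manifold, which is exactly the definition of local weak equivalence recorded above. The stalk of the constant presheaf $K$ at $x$ is simply $K$, so everything hinges on identifying the stalk of $\Sing|K|$. Using the adjunction isomorphism $\Sing(Z,Y)=\Sh(Y^Z)$ stated before Lemma~\ref{singfibrant}, this stalk is the filtered colimit
\[
(\Sing|K|)_x \;=\; \colim_{x \in U} \Sing(U,|K|) \;=\; \colim_{x \in U} \Sh(|K|^U),
\]
the colimit being taken over the filtered system of open neighborhoods $U$ of $x$ ordered by reverse inclusion. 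I would first unwind the natural map on each component: an $n$-simplex $\sigma \in K_n$ is sent to the composite $U \times \DD^n \xrightarrow{\mathrm{pr}} \DD^n \xrightarrow{|\sigma|} |K|$, so that $K \to \Sing|K|(U)$ factors as the unit $\eta\colon K \to \Sh|K|$ of the $(|\cdot|,\Sh)$-adjunction followed by the map $\Sh(|K|^{\pt}) \to \Sh(|K|^U)$ induced by the projection $p\colon U \to \pt$.

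Next I would use that a complex manifold is locally biholomorphic to an open subset of $\C^n=\R^{2n}$, so $x$ admits a cofinal system of contractible open neighborhoods $U$ (for instance convex ones). For such a $U$ the projection $p\colon U \to \pt$ is a homotopy equivalence, with homotopy inverse the inclusion $i\colon \pt \to U$ of $x$: one has $p\circ i = \id_{\pt}$ and $i\circ p \simeq \id_U$. Applying the contravariant functor $|K|^{(-)}$ turns these into $|K|^i\circ|K|^p=\id_{|K|}$ and $|K|^p\circ|K|^i=|K|^{i\circ p}\simeq\id_{|K|^U}$, so that $|K|^p\colon |K|=|K|^{\pt}\to|K|^U$ is a homotopy equivalence. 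Passing to singular complexes gives a weak equivalence $\Sh|K| \rwe \Sh(|K|^U)$, and composing with the unit $K \rwe \Sh|K|$, which is a weak equivalence for every simplicial set $K$, shows that $K \to \Sh(|K|^U)$ is a weak equivalence for every contractible $U$ in the cofinal system.

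The one step that requires genuine care, and which I regard as the main obstacle, is the claim that $|K|^p$ is a true homotopy equivalence of mapping spaces rather than merely a map inducing an isomorphism on homotopy groups. This is where the compact-open topology and the standing hypotheses must cooperate: one converts the contracting homotopy $U \times [0,1] \to U$ into a homotopy $|K|^U \times [0,1] \to |K|^U$ via the exponential law, which is legitimate because $U$ is locally compact Hausdorff and $|K|$ is a $CW$-complex, so that all relevant spaces lie in a convenient category. Granting this, the argument concludes formally: the contractible neighborhoods are cofinal and hence compute the stalk, and since filtered colimits of simplicial sets preserve weak equivalences, the map $K \to \colim_{x \in U} \Sh(|K|^U) = (\Sing|K|)_x$ is a weak equivalence. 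As $x$ was arbitrary, $K \to \Sing|K|$ is a local weak equivalence.
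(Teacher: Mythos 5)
Your proof is correct and follows essentially the same route as the paper: both compute the stalk $(\Sing|K|)_x$ as a filtered colimit over a cofinal system of contractible neighborhoods of $x$, compare it with $\Sh(|K|)$ using local contractibility of manifolds, and conclude via the standard weak equivalence $K \to \Sh|K|$. The only (cosmetic) difference is that you pull back along the projection $U \to \pt$ where the paper restricts along the inclusion of the point, and you spell out the exponential-law and filtered-colimit details that the paper leaves implicit.
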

\begin{proof}
The stalks of the simplicial presheaf $K$ are all canonically isomorphic to $K$ itself. In order to compute the stalks of $\Sing |K|$, $x$ be a point in a complex manifold $X$ and let $D_{\epsilon}$ be a small open disk of radius $\epsilon$ containing $x$. Then we have a canonical map
\[
\colim_{\epsilon \to 0}\Hom(\DD^{\bullet}, |K|^{D_{\epsilon}}) \to \Hom(\DD^{\bullet},|K|).
\]
Since $X$ is a manifold and hence locally contractible, the left hand side is isomorphic to the stalk $(\Sing |K|)_x$ at $x$. The right hand side is just the singular complex of the topological space $|K|$. Hence we have a map
\[
(\Sing |K|)_x \to \Sh(|K|).
\]
This map is a weak equivalence. This implies that the map $K \to (\Sing |K|)_x$ is a weak equivalence. Hence $K\to \Sing |K|$ is a weak equivalence of simplicial presheaves. 
%
\end{proof}

Lemma \ref{singfibrant} and Proposition \ref{constant} show that $\Sing |K|$ is a fibrant replacement of $K$ in the local projective model structure on $\sPre$.

\begin{lemma}\label{Singwe}
The functor $\Sing \colon \sS \to \sPre$, $K \mapsto \Sing |K|$, sends weak equivalences in $\sS$ to local weak equivalences. 
\end{lemma}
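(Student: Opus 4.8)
The plan is to deduce the lemma formally from Proposition \ref{constant} together with the two-out-of-three property of local weak equivalences, so that no new stalk computation beyond the one already carried out is required. Let $f \colon K \to L$ be a weak equivalence of simplicial sets. Applying the functor $K \mapsto \Sing|K|$ produces a morphism $\Sing|f| \colon \Sing|K| \to \Sing|L|$ of simplicial presheaves, and by the naturality of the map in Proposition \ref{constant} we obtain a commutative square
\[
\xymatrix{
K \ar[r]^-{f} \ar[d] & L \ar[d] \\
\Sing|K| \ar[r]_-{\Sing|f|} & \Sing|L|
}
\]
in which the two vertical maps are the natural unit maps.

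First I would record that the two vertical maps are local weak equivalences, which is exactly the assertion of Proposition \ref{constant}. Next I would verify that the top horizontal map, viewed as a morphism of constant simplicial presheaves, is itself a local weak equivalence. For this one uses the observation already made in the proof of Proposition \ref{constant} that the stalk of a constant simplicial presheaf $K$ at any point of $\Manc$ is canonically isomorphic to $K$ itself; hence the stalk of $f$ at every point is the weak equivalence $f \colon K \to L$ of simplicial sets, so that $f$ is a stalkwise, i.e. local, weak equivalence of simplicial presheaves. Since local weak equivalences satisfy the two-out-of-three property, the commutativity of the square then forces the bottom map $\Sing|f|$ to be a local weak equivalence as well, which is the claim.

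The argument presents no genuine obstacle; the only point requiring attention is the identification of the stalks of the constant presheaf $K$, and this is precisely the computation performed for Proposition \ref{constant}. One could instead argue directly on stalks, using that $(\Sing|K|)_x$ is weakly equivalent to $\Sh(|K|)$ and that $|f|$ is a homotopy equivalence of $CW$-complexes, so that $\Sh(|f|)$ is a weak equivalence compatible with the stalk identifications. I would nonetheless prefer to route the proof through Proposition \ref{constant} and two-out-of-three, since this avoids having to check the naturality of the stalk identification by hand and keeps the verification purely formal.
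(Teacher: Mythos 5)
Your proof is correct and follows essentially the same route as the paper's, which likewise deduces the lemma immediately from the stalk computation in the proof of Proposition \ref{constant}. The only cosmetic difference is that you apply two-out-of-three to the naturality square of the unit maps $K \to \Sing |K|$ at the presheaf level, whereas the paper works directly with the stalkwise identification $(\Sing |K|)_x \simeq \Sh(|K|)$ together with the fact that $|f|$ is a weak equivalence; both reduce to the same computation.
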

\begin{proof}
This follows immediately from the calculation of stalks in the proof of Proposition \ref{constant}. 
\end{proof}

\begin{prop}\label{singpretop}
Let $X$ be a complex manifold and $K$ be a simplicial set. There is a natural bijection 
\[
\Hom_{\hosPre}(X, \Sing |K|) \cong \Hom_{\hoTop}(X,|K|).
\]
\end{prop}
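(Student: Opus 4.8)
The plan is to compute the left-hand side in the local projective model structure on $\sPre$, which is legitimate because by Theorem~\ref{comparemodel} the identity functor identifies its homotopy category with $\hosPre$. The key observation is that the representable presheaf $X$ is cofibrant in the local projective structure: the map $\emptyset=\partial\Delta[0]\times X \to \Delta[0]\times X\cong X$ is one of the generating projective cofibrations. At the same time $\Sing|K|$ is fibrant by Lemma~\ref{singfibrant}, since $|K|$ is a $CW$-complex. In a simplicial model category the set of maps in the homotopy category between a cofibrant source and a fibrant target is computed by $\pi_0$ of the simplicial mapping space, so I would begin from
\[
\Hom_{\hosPre}(X,\Sing|K|) \cong \pi_0 \Map(X,\Sing|K|).
\]

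Next I would evaluate this mapping space by the enriched Yoneda lemma. For a representable presheaf $X$, concentrated in simplicial degree zero, and any simplicial presheaf $\Yh$, one has $\Map(X,\Yh)_n = \Hom_{\sPre}(X\times\Delta[n],\Yh)\cong \Hom_{\sS}(\Delta[n],\Yh(X)) = \Yh(X)_n$, whence $\Map(X,\Yh)\cong \Yh(X)$ as simplicial sets. Applying this with $\Yh=\Sing|K|$ and invoking the identification $\Sing(X,|K|)=\Sh(|K|^X)$ recorded just before Lemma~\ref{singfibrant} yields
\[
\Map(X,\Sing|K|)\cong \Sing|K|(X)=\Sing(X,|K|)=\Sh(|K|^X).
\]

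It then remains to take path components. Since $\pi_0$ of the singular complex of any space recovers its set of path components, we obtain $\pi_0\Sh(|K|^X)=\pi_0(|K|^X)$, and the latter is by definition the set of homotopy classes of continuous maps $X\to|K|$. Because a complex manifold has the homotopy type of a $CW$-complex, this set is precisely $\Hom_{\hoTop}(X,|K|)$. Each isomorphism above is natural in $X$ and in $K$, so the composite bijection is natural, as required.

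The main obstacle is the first step, namely justifying that $\Hom_{\hosPre}$ may be read off as $\pi_0$ of a strict simplicial mapping space. This requires keeping the two model structures straight: one computes in the local \emph{projective} structure, where both the cofibrancy of $X$ and, via Lemma~\ref{singfibrant}, the fibrancy of $\Sing|K|$ are available, while interpreting the answer in $\hosPre$, which is defined through the injective structure. The bridge between the two is exactly the Quillen equivalence of Theorem~\ref{comparemodel}. One should also verify that no fibrant or cofibrant replacement is needed, i.e. that $X$ and $\Sing|K|$ are already suitably (co)fibrant, so that the strict mapping space agrees with the derived one and the identification with $\Hom_{\hoTop}(X,|K|)$ is unambiguous.
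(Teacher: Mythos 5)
Your proposal is correct and follows essentially the same route as the paper's own proof: the paper likewise computes in the local projective model structure, observes that the representable $X$ is cofibrant and $\Sing|K|$ is fibrant by Lemma~\ref{singfibrant}, and obtains the chain of bijections $\Hom_{\hosPre}(X,\Sing|K|)\cong\pi_0(\Sing|K|(X))\cong\pi_0(\Sh(|K|^X))\cong\Hom_{\hoTop}(X,|K|)$. You merely spell out two details the paper leaves implicit, namely the enriched Yoneda identification $\Map(X,\Yh)\cong\Yh(X)$ and the fact that a complex manifold has the homotopy type of a $CW$-complex.
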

\begin{proof}
There are different ways to see this. One would be to apply Verdier's hypercovering theorem, which we recalled in Proposition \ref{verdierpure}. This is possible since $\Sing |K|$ is stalkwise fibrant. Another way is to use the freedom to pick one of the various underlying model structures for the homotopy category of $\sPre$. In the local projective model structure, $X$ is a cofibrant object, since it is representable. By Lemma \ref{singfibrant}, $\Sing |K|$ is fibrant in the local projective structure. Then we get the following sequence of natural bijections 
\[
\Hom_{\hosPre}(X, \Sing |K|) \cong \pi_0(\Sing |K|(X)) \cong \pi_0(\Sh(|K|^X) \cong \Hom_{\hoTop}(X,|K|).
\]
%
%
%
\end{proof}

As a consequence of Propositions \ref{constant} and \ref{singpretop}, we obtain the following result about maps in the homotopy category to a simplicial presheaf given by a simplicial set.

\begin{cor}\label{corconstant}
Let $X$ be a complex manifold and $K$ be a simplicial set. There is a natural bijection 
\[
\Hom_{\hosPre}(X, K) \cong \Hom_{\hoTop}(X,|K|).
\]
\end{cor}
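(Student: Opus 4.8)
The plan is to deduce the bijection by stringing together the two preceding propositions, using only the formal fact that a weak equivalence becomes an isomorphism in the homotopy category. The whole content has already been absorbed into Propositions \ref{constant} and \ref{singpretop}, so what remains is a short formal argument together with a naturality check.

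First I would invoke Proposition \ref{constant}, which supplies a local weak equivalence of simplicial presheaves $K \to \Sing |K|$, where $K$ is regarded as the constant simplicial presheaf. By construction of $\hosPre$ as the localization of $\sPre$ at the local weak equivalences (in any of the Quillen-equivalent model structures of Theorem \ref{comparemodel}), this map descends to an isomorphism in $\hosPre$. Post-composition with an isomorphism in any category is a bijection on morphism sets, so the induced map
\[
\Hom_{\hosPre}(X, K) \to \Hom_{\hosPre}(X, \Sing |K|)
\]
is a bijection. Composing this with the bijection of Proposition \ref{singpretop},
\[
\Hom_{\hosPre}(X, \Sing |K|) \cong \Hom_{\hoTop}(X, |K|),
\]
yields the asserted bijection $\Hom_{\hosPre}(X,K) \cong \Hom_{\hoTop}(X,|K|)$.

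The one point I would take care with, and the only place any checking is required, is naturality. One must verify that the composite bijection is natural in both $X$ and $K$. This follows because the weak equivalence $K \to \Sing|K|$ of Proposition \ref{constant} is natural in $K$, hence so is the induced isomorphism in $\hosPre$ and the resulting bijection on $\Hom_{\hosPre}(X,-)$; meanwhile the bijection of Proposition \ref{singpretop} is already natural in $X$ and $K$, so the two natural bijections compose to a natural one. Since no nontrivial computation is involved, I expect no real obstacle here: the corollary is a formal consequence of the two propositions, and the substantive work (the stalkwise computation giving the weak equivalence, and the model-categorical identification of the mapping set) has already been carried out.
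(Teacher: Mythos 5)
Your proof is correct and follows exactly the paper's route: the corollary is stated there as an immediate consequence of Propositions \ref{constant} and \ref{singpretop}, obtained by inverting the weak equivalence $K \to \Sing|K|$ in $\hosPre$ and composing the resulting bijection on $\Hom$-sets with the one from Proposition \ref{singpretop}. Your added naturality check is a reasonable (and harmless) elaboration of what the paper leaves implicit.
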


For given $n\geq 0$ and an abelian group $A$, let $K(A,n)$ be an Eilenberg-MacLane space in $\sS$. For a topological space $X$, we denote by $H_{\sing}^n(X; A)$ the $n$th singular cohomology with coefficients in $A$. It is isomorphic to the group of maps $X \to |K(A,n)|$ in the homotopy category $\hoTop$. From \cite[Corollary D.13]{hs}, we can read off the homotopy groups of the global sections of $\Sing |K(A,n)|$. 

\begin{lemma}\label{D.13}
For any complex manifold $X$, the $i$th homotopy group of the simplicial set $\Sing |K(A,n)|(X)$ is given by a natural isomorphism 
\[
\pi_i (\Sing |K(A,n)|(X)) \cong H^{n-i}_{\sing}(X;A).
\]
\end{lemma}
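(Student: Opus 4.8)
The plan is to reduce the statement to the classical fact that $|K(A,n)|$ represents singular cohomology, and then to read off the degree shift from the suspension isomorphism. First I would use the identification $\Sing(X,Y)=\Sh(Y^X)$ recorded just before Lemma \ref{singfibrant}, applied to $Y=|K(A,n)|$. This exhibits the simplicial set $\Sing |K(A,n)|(X)$ as the singular complex $\Sh(|K(A,n)|^X)$ of the mapping space $|K(A,n)|^X$ equipped with the compact-open topology. Since the singular complex of a space has the same homotopy groups as the space itself, this yields a natural isomorphism $\pi_i(\Sing |K(A,n)|(X)) \cong \pi_i(|K(A,n)|^X)$, where the basepoint is the constant map to the basepoint of $|K(A,n)|$.

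Next I would compute these homotopy groups by the exponential adjunction. Because a complex manifold $X$ is locally compact Hausdorff, the compact-open topology furnishes a homeomorphism $\Map(S^i, |K(A,n)|^X) \cong \Map(S^i \times X, |K(A,n)|)$. Restricting to maps out of the pointed sphere that hit the constant map identifies $\pi_i(|K(A,n)|^X)$ with the set of based homotopy classes $[S^i \wedge X_+, |K(A,n)|]_\ast$: a based map $S^i \to |K(A,n)|^X$ is the same as a map $S^i \times X \to |K(A,n)|$ carrying $\{\ast\}\times X$ to the basepoint, i.e. a based map out of $S^i \wedge X_+ \cong (S^i \times X)/(\{\ast\}\times X)$, where $X_+$ denotes $X$ with a disjoint basepoint.

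Now I invoke that $|K(A,n)|$ is an Eilenberg--MacLane space representing $n$th singular cohomology with coefficients in $A$; since $S^i\wedge X_+$ has the homotopy type of a $CW$-complex (as $X$ does), representability applies and gives $[S^i \wedge X_+, |K(A,n)|]_\ast \cong \tilde H_{\sing}^n(S^i \wedge X_+; A)$. The suspension isomorphism then yields $\tilde H^n_{\sing}(S^i \wedge X_+; A) \cong \tilde H^{\,n-i}_{\sing}(X_+; A) \cong H^{n-i}_{\sing}(X;A)$, the last identification being reduced cohomology of $X_+$ versus unreduced cohomology of $X$. Composing the chain of isomorphisms produces the desired $\pi_i(\Sing |K(A,n)|(X)) \cong H^{n-i}_{\sing}(X;A)$, and naturality in $X$ is immediate because every step is functorial in $X$. (In particular the case $i=0$ is just $\pi_0\, |K(A,n)|^X = [X,|K(A,n)|] \cong H^n_{\sing}(X;A)$.)

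The one point demanding care, and the main obstacle, is the \emph{basepoint dependence}: a priori $\pi_i(|K(A,n)|^X)$ depends on the chosen path component, i.e. on the homotopy class of a map $X \to |K(A,n)|$, whereas the adjunction above computes it at the constant map. To remove this ambiguity I would model $|K(A,n)|$ as a topological abelian group, realizing the simplicial abelian group $K(A,n)$; then $|K(A,n)|^X$ is itself a topological abelian group, so all of its components are homotopy equivalent, the action of $\pi_1$ is trivial, and free and based homotopy classes into it coincide. Hence the computation at the constant basepoint delivers the answer on every component, which is also what legitimizes the $i=0$ case and matches the statement read off from \cite[Corollary D.13]{hs}.
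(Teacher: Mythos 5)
Your argument is correct: the chain $\Sing|K(A,n)|(X)=\Sh(|K(A,n)|^X)$, the exponential adjunction for the locally compact Hausdorff space $X$, representability of singular cohomology on the CW-homotopy-type space $S^i\wedge X_+$, and the suspension isomorphism give exactly the stated degree shift, and you correctly dispose of the basepoint ambiguity by realizing $K(A,n)$ as a (simplicial, hence topological) abelian group so that all components of $|K(A,n)|^X$ are equivalent. The paper itself offers no proof here --- it simply reads the statement off from \cite[Corollary D.13]{hs} --- and your argument is the standard one underlying that citation, so there is nothing to compare beyond noting that you have supplied the details the paper delegates to the reference.
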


Moreover, Corollary \ref{corconstant} shows that $K(A,n)$ represents singular cohomology for complex manifolds also in $\hosPre$.
\begin{prop}
For an abelian group $A$ and a complex manifold $X$, there is a natural isomorphism 
\[
\Hom_{\hosPre}(X, |K(A,n)|) \cong H_{\sing}^n(X; A).
\]
\end{prop}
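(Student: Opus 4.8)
The plan is to obtain this as a direct consequence of Corollary \ref{corconstant} combined with the classical representability of singular cohomology by Eilenberg--MacLane spaces. First I would specialize Corollary \ref{corconstant} to the simplicial set $K = K(A,n)$, which yields a natural bijection
\[
\Hom_{\hosPre}(X, K(A,n)) \cong \Hom_{\hoTop}(X, |K(A,n)|).
\]
A preliminary remark settles what the target means in the statement: by Proposition \ref{constant} the constant simplicial presheaf $K(A,n)$ is weakly equivalent to $\Sing |K(A,n)|$, so both represent the same object of $\hosPre$, and this is the object written $|K(A,n)|$ appearing in the proposition.

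Next I would invoke the classical fact, recalled in the text just before Lemma \ref{D.13}, that the Eilenberg--MacLane space $|K(A,n)|$ represents singular cohomology in the topological homotopy category, that is, that there is a natural isomorphism
\[
\Hom_{\hoTop}(X, |K(A,n)|) \cong H^n_{\sing}(X; A).
\]
Composing the two displayed maps produces the desired bijection, and its naturality in $X$ follows at once from the naturality of each factor.

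The only point that is not purely formal, and which I would regard as the heart of the matter, is checking that this bijection of sets is an isomorphism of abelian groups. The addition on $H^n_{\sing}(X;A)$ comes from the abelian-group-object-up-to-homotopy structure on $K(A,n)$, equivalently from its infinite loop space structure. Since the functor $K \mapsto \Sing |K|$ preserves weak equivalences by Lemma \ref{Singwe}, and both it and the passage to $\hosPre$ respect the relevant finite products, this additive structure is transported compatibly to the $\Hom$-set in $\hosPre$; hence the bijection of Corollary \ref{corconstant} is additive and the identification is one of groups. I do not expect any genuine obstacle beyond this bookkeeping: the substantive content, namely that maps in $\hosPre$ out of a representable into a constant Eilenberg--MacLane presheaf compute topological cohomology, is already supplied by Corollary \ref{corconstant}.
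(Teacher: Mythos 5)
Your proposal is correct and follows essentially the same route as the paper: the paper's proof is a one-line citation of Propositions \ref{constant} and \ref{singpretop} (precisely the content of Corollary \ref{corconstant}) together with the classical identification $\Hom_{\hoTop}(X,|K(A,n)|)\cong H^n_{\sing}(X;A)$ recalled just before Lemma \ref{D.13}. Your extra care about the interpretation of $|K(A,n)|$ as an object of $\hosPre$ and about transporting the additive structure is a reasonable elaboration of points the paper leaves implicit, but it does not change the argument.
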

\begin{proof}
By Propositions \ref{constant} and \ref{singpretop}, the set $\Hom_{\hosPre}(X, K(A,n))$ is in bijection with $\Hom_{\hoTop}(X, |K(A,n)|)\cong H_{\sing}^n(X; A)$. 
\end{proof}

\begin{remark}\label{constantdoldkan}
For an abelian group $A$, we also denote by $A$ the constant presheaf with value $A$. We denote the image of the complex with a single nontrivial sheaf $A$ placed in degree $n \geq 0$ under the Dold-Kan correspondence by $K(A,n)$. This is just the constant simplicial presheaf with value the simplicial Eilenberg-MacLane space $K(A,n)$. 
\end{remark}


\subsection{Hypercohomology}

For a  chain complex of presheaves of abelian groups $\Ch_*$ on $\Manc$, we denote by $H_i(\Ch^*)$ the presheaf $U\mapsto H_i(\Ch_*(U))$. For a cochain complex $\Ch^*$ we will denote by $\Ch_*$ its associated chain complex given by $\Ch_n:=\Ch^{-n}$. For any given $n$, we denote by $\Ch^*[n]$ the cochain complex given in degree $q$ by $\Ch^q[n]:=\Ch^{q+n}$. The differential on $\Ch^*[n]$ is the one $\Ch^*$ multiplied by $(-1)^n$.

Applying the normalized chain complex functor pointwise we obtain a functor $\Gh \mapsto N(\Gh)$ from simplicial presheaves of abelian groups to chain complexes of presheaves of abelian groups. Then we have $\pi_i(\Gh)\cong H_i(N(\Gh))$. Moreover, the functor has a right adjoint $\Gamma$ again obtained by applying the corresponding functor for chain complexes pointwise. The following result is the analog of the Dold-Kan correspondence for simplicial sheaves.

\begin{prop}\label{1.24} {\rm (\cite{brown} and \cite[Proposition 2.1.24]{mv})} 
The pair $(N,\Gamma)$ is a pair of mutually inverse equivalences between the category of complexes of presheaves of abelian groups $\Ch_*$ with $\Ch_i=0$ for $i<0$ and the category of simplicial presheaves of abelian groups.
\end{prop}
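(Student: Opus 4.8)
The plan is to deduce this from the classical Dold--Kan correspondence, using that the entire situation is defined objectwise over the site $\Manc$. First I would record the key structural reinterpretation: a simplicial presheaf of abelian groups is nothing but a functor $\Manc^{\op} \to \mathbf{sAb}$ into the category of simplicial abelian groups, while a chain complex of presheaves of abelian groups concentrated in non-negative degrees is a functor $\Manc^{\op} \to \mathbf{Ch}_{\geq 0}$ into non-negatively graded chain complexes of abelian groups. Under this dictionary the two categories in the statement become the functor categories $\mathrm{Fun}(\Manc^{\op}, \mathbf{sAb})$ and $\mathrm{Fun}(\Manc^{\op}, \mathbf{Ch}_{\geq 0})$.

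Next I would invoke the ordinary Dold--Kan theorem: the normalized chain complex functor $N \colon \mathbf{sAb} \to \mathbf{Ch}_{\geq 0}$ is an equivalence of categories with quasi-inverse $\Gamma$, and this equivalence comes equipped with natural isomorphisms $N\Gamma \cong \id$ and $\Gamma N \cong \id$. The functors $N$ and $\Gamma$ appearing in the proposition are, by construction (as noted just before the statement, where they are obtained "by applying the corresponding functor pointwise"), exactly the post-composition functors $N_* = N \circ (-)$ and $\Gamma_* = \Gamma \circ (-)$ on the functor categories above.

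The core of the argument is then the purely formal fact that post-composition with an equivalence of categories is again an equivalence. I would spell this out by noting two points. First, $N_*$ is well defined: for a morphism $U \to V$ in $\Manc$ the restriction map $\Gh(V) \to \Gh(U)$ is a map of simplicial abelian groups, and functoriality of $N$ turns it into a map of chain complexes, so the assignment $U \mapsto N(\Gh(U))$ is again a presheaf and the restriction maps are respected; the same applies to $\Gamma_*$. Second, the pointwise isomorphisms $N(\Gamma(\Ch_*(U))) \cong \Ch_*(U)$ and $\Gamma(N(\Gh(U))) \cong \Gh(U)$ are natural in $U$ precisely because the classical unit and counit are natural, so they assemble into isomorphisms $N_* \Gamma_* \cong \id$ and $\Gamma_* N_* \cong \id$ in the functor categories, which is what the proposition asserts.

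I do not expect any genuine obstacle: the statement is a formal consequence of the classical correspondence together with the observation that everything is defined levelwise in the presheaf variable. The only point requiring a little care -- and the closest thing to a subtlety -- is verifying that the isomorphisms furnished by ordinary Dold--Kan are natural with respect to the restriction maps of the site, so that they glue to natural transformations of functors on $\Manc^{\op}$; but this is immediate from the naturality built into the classical theorem and needs no computation specific to $\Manc$.
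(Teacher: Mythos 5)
Your argument is correct and is essentially the standard proof of this statement, which the paper itself does not spell out but simply cites from Brown and Morel--Voevodsky: since $N$ and $\Gamma$ are defined by pointwise application of the classical Dold--Kan functors, the equivalence of the presheaf categories follows formally from post-composition with the classical equivalence, with the naturality in the site variable being automatic. There is nothing to add; the only care needed is exactly the point you identify, namely that the unit and counit isomorphisms assemble into natural transformations of presheaves, which holds by the naturality of the classical correspondence.
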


Let $\Fh$ be a presheaf of abelian groups. We denote the image of the complex with a single nontrivial presheaf $\Fh$ placed in degree $n$ under the Dold-Kan correspondence by $K(\Fh,n)$. The $n$th cohomology $H^n(X; \Fh)$ of $X$ with coefficients in $\Fh$ is defined as the sheaf cohomology of $X$ with coefficients in the sheaf $a\Fh$ associated to $\Fh$.

If $\Ch^*$ is a cochain complex of presheaves of abelian groups on $\mathbf{T}$, the hypercohomology $H^*(U, \Ch^*)$ of an object $U$ of $\Manc$ with coefficients in $\Ch^*$ is the graded group of morphisms $\Hom(\Z_U,a\Ch^*)$ in the derived category of cochain complexes of sheaves on $\Manc$, where $a\Ch^*$ denotes the complex of associated sheaves of $\Ch^*$.

The following result is a version of Verdier's hypercovering theorem due to Ken Brown in \cite[Theorem 2]{brown}. We refer the reader also to \cite[Proposition 2.1.25]{mv} and \cite{jardineverdier} for more details.

\begin{prop}\label{verdier} {\rm (\cite[Theorem 2]{brown})}
Let $\Ch^*$ be a cochain complex of presheaves of abelian groups on $\Manc$. Then for any integer $n$ and any object $U$ of $\Manc$ one has a canonical isomorphism 
\[
H^n(U; \Ch^*)\cong \Hom_{\hosPre}(U, \Gamma(\Ch^*[-n])).
\]
In particular, if $\Ch_*=\Fh$ is a presheaf of abelian groups, we have 
\[
H^n(U;\Fh)\cong \Hom_{\hosPre}(U, K(\Fh,n)).
\]
\end{prop}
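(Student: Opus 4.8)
The plan is to deduce this from Verdier's hypercovering theorem (Proposition \ref{verdierpure}) together with the Dold--Kan correspondence for simplicial presheaves (Proposition \ref{1.24}). The point is that the right-hand side can be rewritten as a filtered colimit, indexed by hypercovers of $U$, of naive (\v{C}ech-type) cohomology groups, and that this colimit computes exactly the hypercohomology on the left.

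First I would note that $\Gamma(\Ch^*[-n])$ is a simplicial presheaf of abelian groups, so its stalks are simplicial abelian groups and hence Kan complexes; thus $\Gamma(\Ch^*[-n])$ is locally fibrant and Proposition \ref{verdierpure} applies, giving a natural bijection
\[
\Hom_{\hosPre}(U, \Gamma(\Ch^*[-n])) \cong \colim_{U_\bullet \to U} \pi(U_\bullet, \Gamma(\Ch^*[-n])),
\]
where, restricting to the cofinal family of hypercovers inside $\pi Triv/U$, the colimit runs over augmentations $U_\bullet \to U$ with each $U_m$ a coproduct of representables.

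Next I would evaluate the individual terms at the chain level. Since the target is a simplicial presheaf of abelian groups, the free--forgetful adjunction identifies maps $U_\bullet \to \Gamma(\Ch^*[-n])$ with maps $\Z[U_\bullet] \to \Gamma(\Ch^*[-n])$ of simplicial presheaves of abelian groups, and Proposition \ref{1.24} turns these into maps of chain complexes of presheaves $N\Z[U_\bullet] \to \Ch^*[-n]$. Under this correspondence simplicial homotopies pass to chain homotopies, so $\pi(U_\bullet, \Gamma(\Ch^*[-n])) \cong H_0\,\mathrm{Hom}^\bullet(N\Z[U_\bullet], \Ch^*[-n])$. Because each $U_m$ is a coproduct of representables, evaluating by Yoneda ($\Hom(\Z[U_m],\Ch^p) = \Ch^p(U_m)$) presents the right-hand side as the degree-$n$ cohomology of the total complex $\mathrm{Tot}(\Ch^*(U_\bullet))$ of the double complex $(p,q)\mapsto \Ch^p(U_q)$ obtained by evaluating $\Ch^*$ on the hypercover; that is, the \v{C}ech-type hypercohomology of $U$ computed relative to the single hypercover $U_\bullet$, with the shift $[-n]$ accounting for the placement in degree $n$.

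Finally I would pass to the colimit over all hypercovers. Since $\Gamma$ carries local quasi-isomorphisms of bounded-below complexes to local weak equivalences (as one checks on stalks via the classical Dold--Kan theorem), both sides depend only on the complex of associated sheaves $a\Ch^*$, consistent with the definition of $H^n(U;\Ch^*)$. The remaining and central assertion is that
\[
\colim_{U_\bullet \to U} H^n\big(\mathrm{Tot}(\Ch^*(U_\bullet))\big) \cong H^n(U; \Ch^*) = \Hom_{D}(\Z_U, a\Ch^*[n]),
\]
i.e. that hypercovers are cofinal and compute the derived functors of global sections. This cohomological-descent statement is the heart of Brown's theorem \cite[Theorem 2]{brown}, and I expect it to be the main obstacle: one must verify that every hypercohomology class is represented over some hypercover and that refinement maps are compatible, so that the filtered colimit of \v{C}ech approximations recovers the full derived $\Hom$. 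The ``in particular'' case is then immediate by specializing to the complex $\Ch^*$ concentrated in a single degree, for which $\Gamma(\Ch^*[-n])$ reduces to $K(\Fh,n)$ by the definition of $K(\Fh,n)$ and the shift convention.
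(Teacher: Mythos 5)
Your outline is correct and coincides with how this result is actually established in the sources the paper relies on: the paper offers no proof of Proposition \ref{verdier} beyond the citation of \cite[Theorem 2]{brown} (see also \cite[Proposition 2.1.25]{mv} and \cite{jardineverdier}), and Brown's argument is precisely your reduction, via the hypercovering theorem (Proposition \ref{verdierpure}, restricted to the cofinal family of hypercovers over the representable $U$) and the Dold--Kan correspondence (Proposition \ref{1.24}), to the statement that the filtered colimit over hypercovers of the cohomology of the total complex $\mathrm{Tot}(\Ch^*(U_\bullet))$ computes the derived $\Hom$ against $a\Ch^*$. Be aware, though, that the descent statement you defer at the end is not a routine verification but the entire content of the cited theorem --- Brown handles it by comparing with a Cartan--Eilenberg resolution through the descent spectral sequence with $E_2$-term the \v{C}ech cohomology of the cohomology presheaves (where bounded-below hypotheses enter for convergence) --- so as written your argument is a correct reduction to, rather than an independent proof of, \cite[Theorem 2]{brown}, which is consistent with the paper's own treatment.
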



\subsection{Deligne cohomology in terms of simplicial presheaves}

For $p\geq 0$, let $K(\Z,n) \to K(\C,n)$ be a map of simplicial sets whose cohomology class corresponds to the $(2\pi i)^p$-multiple of the canonical inclusion $\Z \subset \C$, under the isomorphism
\[
\Hom_{\hosS}(K(\Z,n),K(\C,n))\cong \Hom(\Z, \C).
\]
This induces a morphism of simplicial presheaves
\[
K(\Z,n) \to K(\C,n).
\]
As mentioned in Remark \ref{constantdoldkan},  the constant simplicial presheaf $K(\C,n)$ is the image of the Dold-Kan correspondence of the complex $\C$ given by the constant presheaf $\C$ in degree $n$. The canonical inclusion into the complex of sheaves $\Omega^*[-n]$ of holomorphic forms induces a map of simplicial presheaves
\[
K(\C,n) \to \Gamma(\Omega^*[-n]).
\]
Combining these maps we obtain a morphism of simplicial presheaves
\[
K(\Z,n) \to \Gamma(\Omega^*[-n]).
\]
We define $K(\Z,n)(p)$ to be the homotopy pullback of the diagram of simplicial presheaves  
\begin{equation}\label{exampleH}
\xymatrix{
K(\Z,n)(p) \ar[d] \ar[r] & K(\Z, n) \ar[d] \\
\Gamma(\Omega^{*\geq p}[-n]) \ar[r] & \Gamma(\Omega^*[-n]).}
\end{equation}
For a simplicial presheaf $\Xh$, we call the abelian groups 
\[
\Hom_{\hosPre}(\Xh, K(\Z,n)(p))
\]
the Hodge filtered $K(\Z,n)$-cohomology groups of $\Xh$.

\begin{prop}\label{D.6}
Let $X$ be a complex manifold. The Hodge filtered $K(\Z,n)$-cohomology groups of $X$ agree with Deligne cohomology groups of $X$, i.e. for every $n\geq 0$ and $p\geq 0$, the map induced by the adjointness property of the Dold-Kan correspondence induces an isomorphism   
\[
\Hom_{\hosPre}(X, K(\Z,n)(p)) \cong H_{\Dh}^{n}(X;\Z(p)).
\]
\end{prop}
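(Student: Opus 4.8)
The plan is to identify $K(\Z,n)(p)$ with $\Gamma(\Z_{\Dh}(p)[-n])$ in $\hosPre$ and then invoke Verdier's hypercovering theorem. Recall that Deligne cohomology is by definition the hypercohomology of the Deligne complex, $H_{\Dh}^n(X;\Z(p)) = H^n(X;\Z_{\Dh}(p))$. Applying Proposition \ref{verdier} to $\Ch^* = \Z_{\Dh}(p)$ gives at once a natural isomorphism
\[
H_{\Dh}^n(X;\Z(p)) \cong \Hom_{\hosPre}(X, \Gamma(\Z_{\Dh}(p)[-n])),
\]
where the fact that $X$ is an object of $\Manc$ is exactly the hypothesis Verdier's theorem needs. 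Thus the entire content of the proposition reduces to the comparison $\Gamma(\Z_{\Dh}(p)[-n]) \simeq K(\Z,n)(p)$.

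To establish this comparison, I would apply the functor $\Gamma$ together with the shift $[-n]$ to the homotopy pullback square (\ref{pullbackzdp}) of cochain complexes of presheaves, which computes $\Z_{\Dh}(p)$ up to quasi-isomorphism. By Proposition \ref{1.24}, $(N,\Gamma)$ is an equivalence between nonnegatively graded complexes of abelian presheaves and simplicial abelian presheaves, under which $H_i$ corresponds to $\pi_i$; in particular this equivalence matches quasi-isomorphisms with objectwise weak equivalences and therefore carries homotopy pullback squares to homotopy pullback squares. The images of the three known corners of (\ref{pullbackzdp}) are, by Remark \ref{constantdoldkan} and the definitions, precisely the three corner objects $K(\Z,n)$, $\Gamma(\Omega^{*\geq p}[-n])$, and $\Gamma(\Omega^*[-n])$ appearing in the defining square (\ref{exampleH}). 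Since the forgetful functor from simplicial abelian presheaves to $\sPre$ preserves homotopy pullbacks, the resulting homotopy pullback agrees with the one defining $K(\Z,n)(p)$.

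The step that requires care is matching the \emph{maps}, not merely the objects. The right-hand vertical arrow in (\ref{pullbackzdp}) is $(2\pi i)^p$ times the inclusion $\Z \into \C \into \Omega_X^*$, whose second factor $\C \into \Omega_X^*$ is a quasi-isomorphism by the holomorphic Poincar\'e lemma. Under $\Gamma$ this must correspond to the composite $K(\Z,n) \to K(\C,n) \to \Gamma(\Omega^*[-n])$ used in (\ref{exampleH}). Here one has to check that the chosen map $K(\Z,n) \to K(\C,n)$, whose cohomology class was defined to be the $(2\pi i)^p$-multiple of $\Z \subset \C$, indeed corresponds under the adjunction of Proposition \ref{1.24} to the $(2\pi i)^p$-inclusion of complexes, and that $K(\C,n) \to \Gamma(\Omega^*[-n])$ is $\Gamma$ of the inclusion $\C \into \Omega_X^*$ in degree $n$. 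This compatibility of conventions is the only place where the specific twist $(2\pi i)^p$ enters, and it is what I expect to be the main obstacle; everything else is formal.

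Granting the matching of maps, $\Gamma([-n])$ sends (\ref{pullbackzdp}) to a homotopy pullback square whose three known corners coincide with those of (\ref{exampleH}), so the fourth corners agree and we obtain a weak equivalence $\Gamma(\Z_{\Dh}(p)[-n]) \simeq K(\Z,n)(p)$ in $\hosPre$. Composing the induced bijection on $\Hom_{\hosPre}(X,-)$ with the isomorphism from Verdier's theorem yields the desired natural isomorphism
\[
\Hom_{\hosPre}(X, K(\Z,n)(p)) \cong H_{\Dh}^n(X;\Z(p)),
\]
with naturality in $X$ inherited from the naturality in Proposition \ref{verdier}.
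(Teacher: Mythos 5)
Your argument is correct, but it is organized differently from the paper's. The paper does not identify the objects $K(\Z,n)(p)$ and $\Gamma(\Z_{\Dh}(p)[-n])$ at all: it compares the two long exact sequences arising from the defining homotopy pullback squares (\ref{pullbackzdp}) and (\ref{exampleH}), observes that the Dold--Kan correspondence induces a map between them under which every term except the one in question is already known to be an isomorphism (via Proposition \ref{verdier} and Lemma \ref{D.13}), and concludes by the five lemma; this is why the proof is deferred to the discussion surrounding Proposition \ref{les}. You instead prove the stronger, object-level statement that $\Gamma(\tau)$ carries the square of complexes to the square of simplicial presheaves up to weak equivalence, so that a single application of Verdier's theorem to $\Z_{\Dh}(p)$ finishes the job. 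What your route buys is an actual equivalence $\Gamma(\Z_{\Dh}(p)[-n])\simeq K(\Z,n)(p)$ in $\hosPre$ rather than just an abstract isomorphism of groups; what it costs is that you must justify two points you pass over quickly: (i) that $(N,\Gamma)$ carries homotopy pullbacks to homotopy pullbacks does not follow merely from its preserving weak equivalences --- one should say that homotopy pullbacks on both sides are detected by the same Mayer--Vietoris long exact sequence on homology, respectively homotopy, presheaves, which Dold--Kan interchanges (together with the usual care about good truncation to nonnegative chain degrees, which only affects $\pi_0$ and is harmless here); and (ii) the matching of the twisted map $K(\Z,n)\to K(\C,n)$ with $(2\pi i)^p\colon\Z\to\C$, which you correctly isolate and which holds by the very definition of that map via $\Hom_{\hosS}(K(\Z,n),K(\C,n))\cong\Hom(\Z,\C)$. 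Note that the paper's five-lemma argument secretly needs the same compatibility of maps in order to produce a map of long exact sequences in the first place, so the two proofs share their essential content; yours merely packages it at the level of representing objects instead of cohomology groups.
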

%

This result follows from the fact that the Dold-Kan correspondence induces a map of long exact sequences in which we know that all the maps but the one in question are isomorphisms. We will discuss this in a more general context in the next section (see Proposition \ref{les} below).


\section{Stable homotopy theory for complex manifolds}

\subsection{Presheaves of symmetric spectra}

Although most of the theory can be formulated in terms of simplicial presheaves, we need a more general setting in order to obtain a product structure in the next section. Therefore, we consider presheaves of symmetric spectra. \\

Let $\mathbf{T}$ be a Grothendieck site whose underlying category is small. Let $\sPrep$ be the category of pointed simplicial presheaves on $\mathbf{T}$. A symmetric sequence in $\sPrep$ is a sequence of pointed simplicial presheaves $\Xh_0, \Xh_1, \ldots$ with an action by the $n$th symmetric group $\Sigma_n$ on $\Xh_n$. For two symmetric sequences $\Xh$ and $\Yh$, the tensor product $\Xh \otimes \Yh$ is defined as the symmetric sequence given in degree $n$ by 
\[
(\Xh \otimes \Yh)_n = \coprod_{p+q=n} \Sigma_n \times_{\Sigma_p \times \Sigma_q} (\Xh_p \wedge \Yh_q).
\]

Let $S^n$ denote the $n$-fold smash product of the simplicial circle 
\[
S^1=\Delta^1/\partial\Delta[1]
\]
with itself. Considering $S^n$ as a constant simplicial presheaf, we obtain a symmetric sequence $(S^0, S^1, S^2, \ldots)$ where $\Sigma_n$ acts on $S^n$ by permutation of the factors. We denote this symmetric space by $\Sph$. 

A presheaf of symmetric spectra $\Eh$ is given by a sequence of pointed presheaves $\Eh_n$ with an action $\Sigma_n$ for $n\geq 0$ together with $\Sigma_n$-equivariant maps $S^1 \wedge \Eh_n \to \Eh_{n+1}$ such that the composite 
\[
S^p \wedge \Eh_n \to S^{p-1} \wedge \Eh_{n+1} \to \ldots \to \Eh_{n+p}
\]
is $\Sigma_p \times \Sigma_n$-equivariant for all $n,p \geq 0$. A map $\Eh \to \Fh$ of presheaves of symmetric spectra is a collection of $\Sigma_n$-equivariant maps $\Eh_n \to \Fh_n$ compatible with the structure maps of $\Eh$ and $\Fh$. Hence we could say that a presheaf of symmetric spectra is a symmetric spectrum object in the category $\sPrep$. This justifies denoting the category of presheaves of symmetric spectra on $\mathbf{T}$ by 
\[
\SpsPre(\mathbf{T}) = \SpsPre.
\]

Another way to think of a presheaf of symmetric spectra $\Eh$ is that $\Eh$ is a symmetric space together with the structure of an $\Sph$-module given by a map 
\[
m \colon \Sph \otimes \Eh \to \Eh.
\]

The important advantage of symmetric spectra compared to, say, presheaves of Bousfield-Friedlander spectra is that $\SpsPre$ has a monoidal structure defined as follows. For two presheaves of symmetric spectra $\Eh$ and $\Fh$, the smash product $\Eh \wedge \Fh$ is defined as the coequalizer 
\[
\Eh \otimes \Sph \otimes \Fh \rightrightarrows \Eh \otimes \Fh \to \Eh \otimes_{\Sph}\Fh=:\Eh \wedge \Fh.
\]
The two maps in the diagram are given by the module structure of $\Fh$ and the twisted module structure map of $\Eh$
\[
\Eh \otimes \Sph \xrightarrow{\tau} \Sph \otimes \Eh \to \Eh.
\]

\begin{example}
As for symmetric spectra in \cite{hss}, we have the following basic examples of presheaves of symmetric spectra.

(a) Let $\sSp$ be the category of symmetric spectra of simplicial sets defined in \cite{hss}. Every symmetric spectrum $E\in \sSp$ defines a presheaf of symmetric spectra given by the constant presheaf with value $E$. In particular, the sequence of constant simplicial presheaves $(S^0, S^1, S^2, \ldots)$ with the obvious structure maps also defines a presheaf of symmetric spectra which we also denote by $\Sph$ and call it the symmetric sphere spectrum. 

(b) For a pointed simplicial presheaf $\Xh$, we denote by $\Sigma^{\infty}\Xh$ the presheaf of symmetric spectra given by the sequence of pointed simplicial presheaves $S^n\wedge \Xh$ with the natural isomorphisms $S^1 \wedge S^n \wedge \Xh \to S^{n+1}\wedge \Xh$ and the diagonal action of $\Sigma_n$ on $S^n\wedge \Xh$ coming from the left permutation action on $S^n$ and the trivial action on $\Xh$. 

(c) In particular, if $X$ an object of $\mathbf{T}$, we can associate to $X$ a presheaf of symmetric spectra $\Sigma_+^{\infty}(X)$, where $X$ denotes the simplicial presheaf represented by $X$ and the $+$-subscript means that we add a disjoint basepoint.
 
(d) Furthermore, if $\Eh$ is a presheaf of symmetric spectra and $n$ an integer, then we denote by $\Sigma^n\Eh$ the $n$th suspension of $\Eh$ whose $k$th space is $\Eh_{k+n}$.

(e) Let $\Xh$ and $\Yh$ be presheaves of symmetric spectra. There is a function spectrum $\Homhs(\Xh,\Yh)$ in $\SpsPre$ defined as the limit of the diagram in $\SpsPre$
$$\Hom_{\Sigma}(\Xh,\Yh) \rightrightarrows \Hom_{\Sigma}(\Sph \otimes \Xh, \Yh)$$
where the two arrows given by $m^*$ and $m_*$ respectively for the $\Sph$-module structure map $\Sph \otimes \Xh \to \Xh$. The endofunctor $\Homhs(\Yh,-)$ of $\SpsPre$ is right adjoint to the functor $- \wedge \Yh$. 
\end{example}

The category $\SpsPre$ has a stable model structure defined in two steps. We equip $\sPrep$ either with the local injective or with the local projective model structure. In either case we obtain the following intermediate structure.

\begin{defn}\label{defstrict}
A map $f \colon \Eh \to \Fh$ in $\SpsPre$ is called a projective weak equivalence (respectively fibration) if each $f_n \colon \Eh_n \to \Fh_n$ is a weak equivalence (respectively fibration) in $\sPrep$. A map is called a projective cofibration if it has the left lifting property with respect to all maps that are projective weak equivalences and projective fibrations.
\end{defn}

For either the local injective or the local projective model structure on $\sPrep$, the following result is a consequence of Hovey's general result \cite[Theorems 8.2 and 8.3]{hovey}.

\begin{prop}\label{propstrict}
The classes of projective weak equivalence, projective fibrations and projective cofibrations define a proper cellular model structure on $\SpsPre$ such that $\SpsPre$ is a $\sSp$-model category.
\end{prop}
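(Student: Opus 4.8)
The plan is to recognize $\SpsPre$ as the category $\sSp(\sPrep)$ of symmetric spectra in the pointed simplicial presheaf category $\sPrep$ formed with respect to the suspension object $S^1 = \Delta^1/\partial\Delta[1]$, and then to invoke Hovey's general machinery \cite[Theorems 8.2 and 8.3]{hovey} verbatim. A presheaf of symmetric spectra is precisely a symmetric sequence $(\Eh_n)$ carrying $\Sigma_n$-actions together with appropriately equivariant structure maps $S^1 \wedge \Eh_n \to \Eh_{n+1}$, so this identification is literal. Consequently the entire content of the argument lies in checking that $\sPrep$, equipped with either the local injective or the local projective model structure, meets the hypotheses of those theorems; once this is done, the existence, properness, cellularity, and $\sSp$-enrichment of the projective model structure are all formal consequences.

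First I would record that, for either choice of model structure, $\sPrep$ is a left proper cellular symmetric monoidal model category whose monoidal product is the objectwise smash product of pointed simplicial presheaves and whose unit is the constant presheaf $S^0$. Properness and cellularity are part of the cited structures (\cite{jardine} for the local injective case, \cite{blander} and \cite{dugger} for the local projective case), together with the observation that the pointed category inherits these properties from the unpointed one. The pushout-product (SM7) axiom for the smash product must be verified: in the local injective case the cofibrations are the monomorphisms and the axiom reduces to the corresponding objectwise statement for pointed simplicial sets, while in the local projective case it suffices to test the pushout-product of generating (trivial) projective cofibrations, which again reduces to the objectwise level. In both cases one uses that the local weak equivalences are detected on stalks, so that smashing with a cofibration preserves local trivial cofibrations. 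I would also note that $\sPrep$ is a pointed simplicial, hence $\sS_\ast$-, model category, and that the constant-presheaf functor $\sS_\ast \to \sPrep$ is a strong symmetric monoidal left Quillen functor.

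Next I would check the cofibrancy hypotheses on the suspension object and the unit. The object $S^1$, viewed as a constant pointed simplicial presheaf, is cofibrant in $\sPrep$, since $S^1$ is a cofibrant pointed simplicial set and the constant-presheaf functor preserves cofibrations for both the local injective and local projective structures; the unit $S^0$ is cofibrant for the same reason. With these verifications in hand, Hovey's Theorem 8.2 produces the projective model structure on $\sSp(\sPrep) = \SpsPre$, in which the weak equivalences and fibrations are exactly the levelwise ones of Definition \ref{defstrict}, and asserts that this structure is left proper and cellular. Right properness is immediate, since both weak equivalences and fibrations are detected levelwise and each level lives in the right proper category $\sPrep$. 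Finally, applying $\sSp(-)$ to the monoidal left Quillen functor $\sS_\ast \to \sPrep$ and appealing to Hovey's Theorem 8.3 promotes the resulting module structure over $\sSp(\sS_\ast) = \sSp$ to a $\sSp$-model structure, which is the last assertion of the proposition.

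The main obstacle is the verification of the monoidal model category axioms for $\sPrep$ in the local structures, in particular the pushout-product axiom together with the cofibrancy of the unit $S^0$. This is where genuine rather than purely formal work is required, because the local model structures are defined by a stalkwise condition rather than an objectwise one, and one must confirm that smashing with a cofibration preserves local trivial cofibrations. Once $\sPrep$ is confirmed to be one of the input categories to which Hovey's Theorems 8.2 and 8.3 apply, every clause of the proposition follows directly from those theorems.
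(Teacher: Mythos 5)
Your proposal is correct and follows exactly the route the paper takes: the paper derives Proposition \ref{propstrict} purely as a consequence of Hovey's Theorems 8.2 and 8.3 applied to $\SpsPre = \sSp(\sPrep)$ with $\sPrep$ carrying either local model structure, which is precisely your argument. The only difference is that you spell out the hypothesis-checking (properness, cellularity, cofibrancy of $S^0$ and $S^1$, the pushout-product axiom for the local structures) that the paper leaves implicit in the citation.
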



\begin{defn}\label{stablemodel} {\rm (\cite[Definition 8.7]{hovey})} 
Define the set $S$ of maps in $\SpsPre$ to be 
\[
\{ F_{n+1}(C\wedge S^1) \xrightarrow{\zeta_n^C} F_nC \}
\]
as $C$ runs through the domains and codomains of the generating cofibrations of $\sPrep$, and $n\geq 0$, where the map $\zeta_n^C$ is adjoint to the map
\[
C\wedge S^1 \to \Ev_{n+1}F_nC = \Sigma_{n+1}\times (C\wedge S^1)
\]
corresponding to the identity of $\Sigma_{n+1}$. 
The {\it stable local injective model structure} (respectively {\it stable local projective model structure}) on $\SpsPre$ is defined to be the left Bousfield localization with respect to $S$ of the strict local injective (respectively local projective) model structure. 
\end{defn}

Since both the local projective and local injective model structures are proper, simplicial and cellular whose generating cofibrations and trivial cofibrations have cofibrant domains and codomains, we can deduce the following consequence of \cite[Theorem 9.3]{hovey}, and Theorem \ref{comparemodel}.

\begin{theorem}\label{stablecomparemodel} {\rm (\cite{blander}, \cite{dugger}, \cite{hovey}, \cite{jardinestable})} 
The identity functor on $\SpsPre$ is a left Quillen equivalence from the stable local projective model structure to the stable local injective model structure. We denote the stable homotopy category obtained by localizing $\SpsPre$ at the stable local equivalences by $\hoSpsPre$. 
\end{theorem}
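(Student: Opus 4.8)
The plan is to reduce the claimed statement, which asserts that the identity functor gives a left Quillen equivalence between the stable local projective and stable local injective model structures on $\SpsPre$, to the already-established comparison at the \emph{unstable} level together with the general machinery of left Bousfield localization. The key external input is Hovey's Theorem~9.3 together with Theorem~\ref{comparemodel}, both of which are quoted in the hypotheses leading up to the statement; my job is to assemble these correctly rather than to prove anything from scratch.

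First I would recall that, by Theorem~\ref{comparemodel}, the identity functor is a left Quillen equivalence from the local projective to the local injective model structure on $\sPre$ (and hence on $\sPrep$ after adding basepoints, which is a routine enrichment). At the level of the \emph{strict} (unstable) model structures on $\SpsPre$ from Proposition~\ref{propstrict}, this Quillen equivalence of the underlying categories $\sPrep$ is levelwise, so it immediately induces a Quillen equivalence between the strict local projective and strict local injective model structures on $\SpsPre$: a map is a projective weak equivalence or fibration precisely when each level $f_n$ is such in $\sPrep$, and the two local structures on $\sPrep$ have identical weak equivalences (the local, i.e.\ stalkwise, equivalences), with the projective cofibrations contained in the injective ones.

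Next I would invoke the functoriality of left Bousfield localization. The stable structures are, by Definition~\ref{stablemodel}, the left Bousfield localizations of the respective strict structures at the \emph{same} set $S$ of maps. The relevant general principle is that if the identity is a left Quillen equivalence $\Ch \to \Dh$ and one localizes both sides at a set $S$, then the identity remains a left Quillen equivalence between the localizations $L_S\Ch \to L_S\Dh$, provided the hypotheses of Hovey's Theorem~9.3 hold. Here those hypotheses are exactly the properties recorded just before the statement: both the local projective and local injective structures are proper, simplicial and cellular, and their generating (trivial) cofibrations have cofibrant domains and codomains. Concretely, the localized fibrant objects are the $S$-local (i.e.\ the $\Omega$-spectrum) objects in each structure, and since the weak equivalences of the two strict structures coincide, the $S$-local weak equivalences also coincide; the identity then preserves and detects them.

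The main obstacle I anticipate is verifying that the set $S$ used to localize is genuinely compatible across the two structures, i.e.\ that localizing at $S$ produces the same notion of stable weak equivalence independently of whether one starts projectively or injectively. This hinges on the fact that the generating cofibrations of $\sPrep$ entering the definition of the maps $\zeta_n^C$ can be taken compatibly, so that the maps $F_{n+1}(C\wedge S^1)\xrightarrow{\zeta_n^C} F_nC$ are the same in both structures; since the projective generating cofibrations have representable (hence cofibrant) domains and codomains in either structure, this compatibility is exactly what is guaranteed, and it is the point where one must be careful rather than where any deep difficulty lies. Once this is in hand, the conclusion is formal: the identity functor is a left Quillen equivalence of the stable structures, and the common homotopy category is denoted $\hoSpsPre$.
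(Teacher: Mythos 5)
Your proposal is correct and follows essentially the same route as the paper: the paper likewise deduces the theorem from the unstable comparison (Theorem \ref{comparemodel}) combined with Hovey's Theorem 9.3, using that both underlying structures are proper, simplicial and cellular with (trivial) generating cofibrations having cofibrant domains and codomains. Your additional remark about the compatibility of the localizing sets is exactly the point that Hovey's theorem is designed to absorb, so nothing further is needed.
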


\begin{defn}\label{omegaspectra}
{\rm (1)} A presheaf of symmetric spectra $\Eh \in \SpsPre$ is called an $\Omega$-spectrum if each $\Eh_n$ is fibrant in $\sPrep$ and the adjoint structure maps $\Eh_n \to \Eh_{n+1}^{S^1}$ are weak equivalences in $\sPrep$ for all $n \geq 0$. \\
{\rm (2)} A map in $\SpsPre$ is called an injective fibration if it has the right lifting property with respect to all maps that are level cofibrations and level weak equivalences.\\
{\rm (3)} A spectrum $\Eh$ in $\SpsPre$ is called an injective spectrum if the map $\Eh \to *$ is an injective fibration. 
\end{defn}

By \cite[Theorem 8.8]{hovey}, the stably fibrant objects are the $\Omega$-spectra. For two presheaves of symmetric spectra $\Xh$ and $\Yh$, let $\Map_{\SpsPre}(\Xh, \Yh)$ denote the mapping space which is part of the simplicial structure on $\SpsPre$.
The following lemma shows that the identity functor on $\SpsPre$ is a Quillen equivalence from the stable local injective model structure to the stable model structure of \cite{jardinestable}.

\begin{lemma}\label{3.1.4}
Let $f \colon \Xh \to \Yh$ be a map in $\SpsPre$. Then the following conditions are equivalent:\\
{\rm (1)} $f$ is a stable equivalence. \\ 
{\rm (2)} $f$ induces a weak equivalence of Kan complexes 
\[
\Map_{\SpsPre}(f,\Eh): \Map_{\SpsPre}(\Yh,\Eh) \to \Map_{\SpsPre}(\Xh,\Eh)
\]
for every injective $\Omega$-spectrum $\Eh$ in $\SpsPre$.\\
{\rm (3)} $f$ induces a level equivalence 
\[
\Homhs(f,\Eh) \colon \Homhs(\Yh,\Eh) \to \Homhs(\Xh,\Eh)
\]
for every injective $\Omega$-spectrum $\Eh$ in $\SpsPre$.
\end{lemma}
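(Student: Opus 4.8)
The plan is to use condition (2) as the hinge, establishing $(1)\Leftrightarrow(2)$ from the theory of left Bousfield localizations and $(2)\Leftrightarrow(3)$ from the tensor--hom adjunction on $\SpsPre$. The first step is to identify the stably fibrant objects. Since the stable local injective structure is by Definition~\ref{stablemodel} the left Bousfield localization of the strict local injective structure at the set $S$, \cite[Theorem~8.8]{hovey} identifies its fibrant objects with the $S$-local objects that are strictly fibrant, and these are precisely the injective $\Omega$-spectra of Definition~\ref{omegaspectra}. This is exactly the class over which conditions (2) and (3) quantify, so the statement asserts that stable equivalences are detected by mapping into, respectively by the function spectra out of, the stably fibrant objects.

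For $(1)\Leftrightarrow(2)$ I would invoke the standard fact that in a left Bousfield localization a map is a local weak equivalence if and only if it induces a weak equivalence of homotopy function complexes into every fibrant object of the localized structure. The one point demanding care is that $\Map_{\SpsPre}(-,\Eh)$ should compute the homotopy function complex without a preliminary cofibrant replacement of the source. This is where the \emph{injectivity} of $\Eh$ is essential: passing to Jardine's injective level structure, which has the same weak equivalences but in which the cofibrations are the levelwise monomorphisms so that every object is cofibrant, an injective object $\Eh$ is level-fibrant, and hence by Ken Brown's lemma $\Map_{\SpsPre}(-,\Eh)$ carries every strict weak equivalence between these (automatically cofibrant) objects to a weak equivalence of Kan complexes. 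Thus no cofibrant replacement is needed and the localization criterion yields $(1)\Leftrightarrow(2)$ directly.

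For $(2)\Leftrightarrow(3)$ I would argue levelwise and objectwise. Two preliminary facts are needed: first, for any $\Xh$ the function spectrum $\Homhs(\Xh,\Eh)$ is again an injective $\Omega$-spectrum whenever $\Eh$ is, since $\Homhs(\Xh,-)$ is right adjoint to $-\wedge\Xh$ and the latter preserves levelwise monomorphisms and their trivial versions; second, a local weak equivalence between level-fibrant objects is an objectwise weak equivalence, so a level equivalence of injective $\Omega$-spectra may be tested on sections over each representable $U$. Implication $(3)\Rightarrow(2)$ is then immediate: evaluating the level-zero presheaf of $\Homhs(f,\Eh)$ at the terminal object $\pt$ of $\Manc$ recovers $\Map_{\SpsPre}(f,\Eh)$, and a level equivalence of fibrant objects is in particular an objectwise weak equivalence in level zero. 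For $(2)\Rightarrow(3)$ I would use the free presheaf spectra $F_n(U_+)$ and the adjunction isomorphism $\Homhs(\Xh,\Eh)_n(U)\cong\Map_{\SpsPre}(F_n(U_+)\wedge\Xh,\Eh)\cong\Map_{\SpsPre}(\Xh,\Homhs(F_n(U_+),\Eh))$. Since $\Homhs(F_n(U_+),\Eh)$ is again an injective $\Omega$-spectrum, hypothesis (2) applied to it shows that $\Homhs(f,\Eh)_n(U)$ is a weak equivalence for every $U$ and $n$; by the objectwise criterion this means $\Homhs(f,\Eh)$ is a level equivalence.

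The main obstacle is the $(1)\Leftrightarrow(2)$ step, and within it the cofibrancy bookkeeping: making precise that the naive simplicial mapping space $\Map_{\SpsPre}(-,\Eh)$ already models the homotopy function complex, rather than some cofibrantly replaced variant, and that the fibrant objects of the Bousfield-localized structure coincide on the nose with the injective $\Omega$-spectra of Definition~\ref{omegaspectra}. Once this identification and the injectivity trick are in place, $(2)\Leftrightarrow(3)$ is a formal consequence of the monoidal adjunction together with the stability of the class of injective $\Omega$-spectra under the functors $\Homhs(F_n(U_+),-)$, and the intended comparison with the stable structure of \cite{jardinestable} follows.
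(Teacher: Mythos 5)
Your handling of $(2)\Leftrightarrow(3)$ is essentially the paper's argument: the paper uses the adjunction identity $\Ev_k\Homhs(f,\Eh)=\Map_{\SpsPre}(f,\Homhs(F_kS^0,\Eh))$ where you use $F_n(U_+)$ and evaluate sections at each $U$, and both versions rest on the (correct, and worth stating) fact that $\Homhs(F_n(U_+),-)$ preserves injective $\Omega$-spectra. Likewise your device for avoiding cofibrant replacement in $\Map_{\SpsPre}(-,\Eh)$ --- every object is a level cofibration from the point, $\Eh$ is level fibrant, Ken Brown --- is exactly how the paper passes between $f$ and $Q_{\proj}f$.

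The gap is in $(1)\Leftrightarrow(2)$, specifically in the direction $(2)\Rightarrow(1)$. You assert that the fibrant objects of the stable local injective structure ``coincide on the nose with the injective $\Omega$-spectra of Definition~\ref{omegaspectra}.'' They do not. The stable local injective structure is the Bousfield localization of the strict structure of Proposition~\ref{propstrict}, whose fibrations are the \emph{levelwise} fibrations and whose cofibrations are generated by $F_n$ applied to generating cofibrations of $\sPrep$; so by \cite[Theorem~8.8]{hovey} the stably fibrant objects are the levelwise fibrant $\Omega$-spectra. An injective spectrum in the sense of Definition~\ref{omegaspectra} must in addition satisfy the right lifting property against \emph{all} level cofibrations that are level weak equivalences --- a strictly stronger, spectrum-level condition; these are the fibrant objects of Jardine's stable structure in \cite{jardinestable}, which is precisely why the lemma is needed as a comparison statement rather than being a tautology. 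The localization criterion you invoke detects stable equivalences by mapping into \emph{all} stably fibrant objects, so it gives $(1)\Rightarrow(2)$ (injective $\Omega$-spectra are among them) but not $(2)\Rightarrow(1)$: hypothesis (2) quantifies only over a proper subclass. To close the loop you need the replacement step the paper supplies: for an arbitrary stably fibrant $\Fh$ (a levelwise fibrant $\Omega$-spectrum), its fibrant replacement $R_{\inj}\Fh$ in Jardine's injective structure is an injective $\Omega$-spectrum and $\Fh\to R_{\inj}\Fh$ is a \emph{level} equivalence between level fibrant objects; since $Q_{\proj}\Xh$ and $Q_{\proj}\Yh$ are cofibrant, it induces weak equivalences $\Map_{\SpsPre}(Q_{\proj}\Xh,\Fh)\to\Map_{\SpsPre}(Q_{\proj}\Xh,R_{\inj}\Fh)$ and likewise for $\Yh$, so hypothesis (2) applied to $R_{\inj}\Fh$ forces $\Map_{\SpsPre}(Q_{\proj}f,\Fh)$ to be a weak equivalence for every stably fibrant $\Fh$, i.e. $f$ is a stable equivalence. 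With that step inserted your argument coincides with the paper's.
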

\begin{proof}
In order to prove the lemma we recall the injective model structure on $\SpsPre$ of \cite[Theorem 2]{jardinestable}. A map in $\SpsPre$ is called an injective cofibration (injective weak equivalence) if it is a levelwise cofibration (weak equivalence). The fibrant objects in the injective model structure are the injective $\Omega$-spectra. 

The identity functor provides a Quillen equivalence between the injective and projective model structures on $\SpsPre$. In particular, every injective $\Omega$-spectrum is a fibrant object in the projective model structure on $\SpsPre$ and every projective cofibrant object is also injective cofibrant. This has the following consequence. 

Let $Q_{\proj}$ be a cofibrant replacement functor in the projective model structure. If $f$ is a stable equivalence, then $\Map_{\SpsPre}(Q_{\proj}f, \Eh)$ is a weak equivalence of Kan complexes for every injective $\Omega$-spectrum $\Eh$. Since the maps $\Xh \to Q_{\proj}\Xh$ and $\Yh \to Q_{\proj}\Yh$ are level equivalences, we obtain that $\Map_{\SpsPre}(f, \Eh)$ is a weak equivalence of Kan complexes for every injective $\Omega$-spectrum $\Eh$. 

Now let $R_{\inj}$ be a fibrant replacement functor in the injective model structure and assume that $f$ induces a weak equivalence $\Map_{\SpsPre}(f, \Eh)$ for every injective $\Omega$-spectrum $\Eh$. This implies that $\Map_{\SpsPre}(Q_{\proj}f, R_{\inj}\Fh)$ is a weak equivalence for every $\Omega$-spectrum $\Fh$. Since the map $\Fh \to R_{\inj}\Fh$ is a level equivalence, this shows that $\Map_{\SpsPre}(Q_{\proj}f, \Fh)$ is a weak equivalence for every $\Omega$-spectrum $\Fh$, i.e. that $f$ is a stable equivalence. This proves that (1) and (2) are equivalent.

The second and third conditions are equivalent, since 
$$\Ev_k \Homhs(f, \Eh) = \Map_{\SpsPre}(f, \Homhs(F_k S^0, \Eh)).$$
\end{proof}

Consider $\SpsPre$ with the stable local injective model structure. Let $K$ be the class in $\SpsPre$ of all maps $f \wedge \Xh$, where $f$ is a stable trivial cofibration and $\Xh$ is a presheaf of symmetric spectra. One can show exactly as in \cite[\S 5.4]{hss}, that that stable local injective model structure is monoidal and that each map in $K$-cof is a stable equivalence. This fact implies that the monoid axiom holds in $\SpsPre$ and by \cite[Theorem 4.1]{ss}, we can deduce the following result on monoids in $\SpsPre$. 
\begin{theorem}
The forgetful functor creates a model structure on the category of monoids in $\SpsPre$ for which a morphism is a weak equivalence (fibration) if and only if the underlying map of presheaves of symmetric spectra is a stable equivalence (stable fibration). 
\end{theorem}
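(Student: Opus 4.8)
The plan is to obtain the model structure on monoids by transferring the stable local injective model structure along the free-forgetful adjunction, and then to check that the hypotheses of Schwede--Shipley's lifting theorem \cite[Theorem 4.1]{ss} are met. Write $U$ for the forgetful functor from monoids in $\SpsPre$ to $\SpsPre$, and let $T$ be its left adjoint, the free monoid (tensor algebra) functor
\[
T(\Xh)=\bigvee_{n\geq 0}\Xh^{\wedge n},
\]
where $\Xh^{\wedge 0}=\Sph$. One declares a map of monoids to be a weak equivalence (resp.\ fibration) precisely when $U$ of it is a stable equivalence (resp.\ stable fibration), and defines the cofibrations of monoids by the left lifting property against the maps that are simultaneously weak equivalences and fibrations. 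The content of the theorem is that these three classes satisfy the model category axioms with $U$ creating weak equivalences and fibrations.

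First I would record that all the structural hypotheses are already in place. By Proposition \ref{propstrict} and Theorem \ref{stablecomparemodel}, $\SpsPre$ with the stable local injective model structure is a cofibrantly generated, indeed cellular, monoidal model category; let $I$ and $J$ denote its sets of generating cofibrations and generating trivial cofibrations. The category of monoids is complete and cocomplete: limits are created by $U$, while colimits are built in the standard way, since reflexive coequalizers and filtered colimits are created by $U$. Because the domains and codomains of $I$ and $J$ are small in $\SpsPre$, the images $T(I)$ and $T(J)$ have small domains in the category of monoids, so the small object argument applies to them.

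The crux, and the step I expect to be the main obstacle, is to verify the acyclicity condition of Kan's transfer principle: every map in $T(J)$-cell, that is, every transfinite composite of pushouts in monoids of maps $T(j)$ with $j\in J$, must become a stable equivalence after applying $U$. This is exactly where the monoid axiom enters. Following the pushout filtration of \cite{ss}, one analyzes a single pushout in monoids: given a monoid $R$ and a pushout of $T(j)$ along a map $T(K)\to R$, the underlying map $R\to R'$ in $\SpsPre$ is the transfinite composite of a filtration whose successive stages are pushouts of maps of the form $j\wedge \Xh$ for suitable presheaves of symmetric spectra $\Xh$. Each such map lies in the class $K$ introduced just before the theorem, since $j$ is a stable trivial cofibration. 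By the discussion preceding the statement, every map in $K$-cof is a stable equivalence, which is precisely the monoid axiom; hence $U(R\to R')$, being assembled from pushouts and transfinite composites of maps in $K$, is a stable equivalence. Passing to general $T(J)$-cell complexes by a further transfinite composition, again absorbed by the monoid axiom, yields the required acyclicity.

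With completeness, cocompleteness, smallness, and the monoid-axiom acyclicity in hand, the hypotheses of \cite[Theorem 4.1]{ss} hold verbatim, and that theorem produces the transferred model structure on monoids in $\SpsPre$. The only genuinely delicate point is the pushout-filtration computation of the previous paragraph; everything else is a matter of citing the monoidality, cellularity, and monoid axiom that have already been established for $\SpsPre$.
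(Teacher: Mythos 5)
Your proposal is correct and follows essentially the same route as the paper: both establish that the stable local injective model structure on $\SpsPre$ is monoidal and satisfies the monoid axiom (via the class $K$ of maps $f\wedge \Xh$ with $f$ a stable trivial cofibration, all of whose $K$-cofibrations are stable equivalences), and then invoke Schwede--Shipley's lifting theorem \cite[Theorem 4.1]{ss}. The additional detail you supply on the pushout filtration is really the internal mechanism of the cited theorem rather than a different argument.
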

\begin{remark}\label{constantembedding}
The functor $\sSp \to \SpsPre$ sending a symmetric spectrum $E$ to the constant presheaf of symmetric spectra with value $E$ is a strong symmetric monoidal left Quillen functor. In particular, a commutative $\Sph$-algebra $E$ in $\sSp$ is still a commutative $\Sph$-algebra in $\SpsPre$ when we consider $E$ as a constant presheaf.  
\end{remark}
%
%
%
\subsection{Eilenberg-MacLane spectra}
In the remainder of this section, we start to explore Brown's idea \cite{brown} to use sheaves of spectra to define generalized sheaf cohomology.

Let $\Predga$ be the category of presheaves of differential graded $\C$-algebras on $\mathbf{T}$. 
By \cite{shipley}, for every differential graded algebra $A_*$, there is functorial construction of a symmetric Eilenberg-MacLane ring spectrum $HA_*$. (The reader may also want to consult \cite[\S 2.6]{ds} for a short summary on the construction of the functor $H$.) Pointwise application yields a functor 
\[
H \colon \Predga \to \SpsPre.
\]

By its construction, the functor $H$ sends the constant presheaf $\C$ to the constant symmetric Eilenberg-MacLane spectrum $H\C$. Moreover, since the tensor product in $\Predga$ and the smash product in $\SpsPre$ are defined pointwise, the functor $H$ is a lax monoidal functor and its image lies in fact in the subcategory of monoids over $H\C$, or in other words $H\C$-algebras in $\SpsPre$: 
\[
H \colon \Predga \to \HCAlg.
\]
In particular, a morphism $\Ch^* \to \Dh^*$ of presheaves of differential graded $\C$-algebras induces a morphism of monoids $H\Ch^* \to H\Dh^*$ in $\SpsPre$.

Our applications in the following sections require graded versions of the complex of holomorphic forms, since different coefficient rings will be substituted in the game. The correct choice of compatible  gradings and filtrations is an important point. This will become apparent in the proof of the main result in Theorem \ref{thmfundses}.

So we enlarge our roster by considering also cochains, cocycles and forms with values in an evenly graded complex vector space $\Vh$. The main example will be $\Vh_{2*}=\pi_{2*}MU\otimes \C$. We will use the convention to grade cochains and forms with values in $\Vh$ in such a way that $C^i(X; \Vh_{j})$ has total degree $(i-j)$. We will write
\[
C^n(X; \Vh_{2*}) := \bigoplus_{j} C^{n+2j}(X; \Vh_{2j}),
\]
and
\[
H^n(X; \Vh_{2*}) := \bigoplus_{j} H^{n+2j}(X; \Vh_{2j}).
\]
For the presheaf of holomorphic forms we will write 
\[
\Omega^*(\Vh_{2*}) = \bigoplus_j \Omega^{*}(\Vh_{2j})[-2j].
\] 
For a given integer $p$, we will denote  
\[
\Omega^{*\geq p}(\Vh_{2*}) := \bigoplus_j \Omega^{*\geq p+j}(\Vh_{2j})[-2j].
\]
%
Note that by our grading convention, this defines in general a complex different from $\Omega^*(\Vh_{2*})$ even for negative $p$. 

For hypercohomology groups we write 
\[
H^n(X; \Omega^{*\geq p}(\Vh_{2*})) = \bigoplus_{j} H^{n+2j}(X; \Omega^{*\geq p+j}(\Vh_{2j})[-2j]).
\]
These gradings have corresponding counterparts on the level of Eilenberg-MacLane spectra. Let $\Vh_{2*}$ be an evenly graded $\C$-algebra and $\Ch^* \in \Predga$. We denote by $\Ch^*(\Vh_{2*})$ the presheaf of complexes given in degree $n$ by
\[
\Ch^n(\Vh_{2*}) = \bigoplus_j \Ch^{n+2j} \otimes \Vh_{2j}.
\]
The image of $\Ch^*(\Vh_{2*})$ under $H$ in $\SpsPre$ has the form  
\[
H(\Ch^*(\Vh_{2*})) = \bigvee_j \Sigma^{2j} H(\Ch^{*} \otimes \Vh_{2j}).
\]
In particular, if $\Ch^*(\Vh_{2*})$ is just the graded ring $\Vh_{2*}$ considered as a chain complex, then we have 
\[
H(\Vh_{2*}) = \bigvee_j \Sigma^{2j}H(\Vh_{2j}).
\]
For example, let $\Ch^*=\Omega^*$ be the presheaf of holomorphic forms on $\Manc$. The inclusion of cochain complexes $\C \into \Omega^*$ induces a map of presheaves of symmetric spectra
\[
H\C \to H\Omega^*.
\]
Moreover, for the $\C$-algebra $\Vh_{2*}$, there is a canonical map of presheaves of symmetric spectra
\[
H(\Vh_{2*}) \to H(\Omega^*(\Vh_{2*})).
\]


\begin{prop}\label{hypersymmgraded}
Let $X$ be an object of $\mathbf{T}$, $\Ch^* \in \Predga$ and let $\Vh_{2*}$ be an evenly graded commutative $\C$-algebra. Then we have a natural isomorphism of graded-commutative rings
\[
\bigoplus _n \Hom_{\hoSpsPre}(\Sigma^{\infty}_+(X), \Sigma^nH(\Ch^*(\Vh_{2*}))) \cong \bigoplus_n H^n(X; \Ch^*(\Vh_{2*})).
\]
In particular, for $X\in \Manc$, there are natural isomorphisms of graded commutative rings
\[
\bigoplus_n\Hom_{\hoSpsPre}(\Sigma^{\infty}_+(X), \Sigma^n H(\Omega^*(\Vh_{2*}))) \cong \bigoplus_n H^n(X; \Vh_{2*})~\mathrm{and}
\]
\[
\bigoplus_n\Hom_{\hoSpsPre}(\Sigma^{\infty}_+(X), \Sigma^n H(\Omega^{*\geq p}(\Vh_{2*}))) \cong \bigoplus_n H^n(X; \Omega^{*\geq p}(\Vh_{2*})).
\]
\end{prop}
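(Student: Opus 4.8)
The plan is to reduce the graded, $\Vh_{2*}$-twisted statement to the ungraded case, and then to identify the ungraded case with Brown–Verdier hypercohomology via the results already established in the excerpt. The key structural input is the splitting formula recorded just before the statement, namely $H(\Ch^*(\Vh_{2*})) = \bigvee_j \Sigma^{2j} H(\Ch^* \otimes \Vh_{2j})$, together with the grading convention $C^n(X;\Vh_{2*}) = \bigoplus_j C^{n+2j}(X;\Vh_{2j})$. So first I would observe that, since $\Sigma^{\infty}_+(X)$ is a (finite, for reasonable $X$) suspension spectrum and the indexing wedge is compatible with the stable homotopy category structure, mapping out of $\Sigma^{\infty}_+(X)$ into the wedge decomposes as
\[
\Hom_{\hoSpsPre}(\Sigma^{\infty}_+(X), \Sigma^n H(\Ch^*(\Vh_{2*}))) \cong \bigoplus_j \Hom_{\hoSpsPre}(\Sigma^{\infty}_+(X), \Sigma^{n+2j} H(\Ch^*\otimes \Vh_{2j})).
\]
This reduces everything to understanding a single summand $\Hom_{\hoSpsPre}(\Sigma^{\infty}_+(X), \Sigma^m H(\Ch^*\otimes \Vh_{2j}))$ for the complex $\Ch^*\otimes\Vh_{2j}$ of presheaves of abelian groups.

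For that single summand the plan is to pass from symmetric spectra back to simplicial presheaves of abelian groups, where Proposition \ref{verdier} (Brown's version of Verdier's theorem) directly computes hypercohomology. Concretely, the Eilenberg-MacLane functor $H$ built from the construction of \cite{shipley} is designed so that maps in $\hoSpsPre$ out of a suspension spectrum $\Sigma^{\infty}_+(X)$ into $\Sigma^m H D^*$ agree with maps in $\hosPre$ from $X$ into the Dold–Kan image $\Gamma(D^*[-m])$; this is the compatibility between the stable model structure and the unstable one under the adjunction $(\Sigma^{\infty}_+, \Ev_0)$ combined with the fact that $H D^*$ is an $\Omega$-spectrum whose zeroth space represents the relevant Dold–Kan object. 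Hence
\[
\Hom_{\hoSpsPre}(\Sigma^{\infty}_+(X), \Sigma^m H D^*) \cong \Hom_{\hosPre}(X, \Gamma(D^*[-m])) \cong H^m(X; D^*),
\]
where the last isomorphism is exactly Proposition \ref{verdier}. Applying this with $D^* = \Ch^*\otimes\Vh_{2j}$ and $m = n+2j$, and reassembling the wedge sum using the grading convention, yields the first displayed isomorphism. The two special cases then follow by specializing $\Ch^*$ to $\Omega^*$ and to $\Omega^{*\geq p}$ and invoking the notational definitions of $H^n(X;\Vh_{2*})$ and $H^n(X;\Omega^{*\geq p}(\Vh_{2*}))$, which were set up precisely so that the index bookkeeping matches; for the $\Omega^*$ case one additionally uses that $\Omega^*$ is a resolution of the constant sheaf $\C$, so hypercohomology with $\Omega^*$-coefficients computes $H^*(X;\C)$.

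The ring-structure claim requires separate care: the isomorphism must be shown to respect products, not merely to be an isomorphism of graded groups. The plan here is to use that $H$ is lax monoidal (noted in the Eilenberg-MacLane subsection), so that the multiplication $\Ch^* \otimes \Ch^* \to \Ch^*$ and the algebra structure on $\Vh_{2*}$ induce a ring spectrum structure on $H(\Ch^*(\Vh_{2*}))$ whose induced product on homotopy classes of maps is the cup product on hypercohomology; the sign conventions built into the grading (the factor $(-1)^n$ on $\Ch^*[n]$ and the total-degree convention $(i-j)$) are what make the pairing graded-commutative with the correct signs. I expect the main obstacle to be precisely this compatibility of the two isomorphisms at the level of the \emph{stable} category, i.e.\ verifying carefully that the adjunction-based identification $\Hom_{\hoSpsPre}(\Sigma^{\infty}_+(X), \Sigma^m HD^*) \cong \Hom_{\hosPre}(X,\Gamma(D^*[-m]))$ is natural and multiplicative, since this requires tracking the Eilenberg-MacLane functor of \cite{shipley} through the stabilization and matching its product with the cup product appearing in Proposition \ref{verdier}. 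The purely additive content, by contrast, is a formal consequence of the wedge splitting plus Verdier's theorem.
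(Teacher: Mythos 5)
Your overall route is the same as the paper's: reduce via the $(\Sigma^{\infty}_+,\Ev_0)$ adjunction to a statement about maps in $\hosPre$ into a Dold--Kan object, and then quote Proposition \ref{verdier}. The wedge/direct-sum bookkeeping and the appeal to lax monoidality for the ring structure are also consistent with what the paper does. But there is one concrete step you assert without justification, and it is precisely the step the paper's proof is organized around: you claim that Shipley's monoidal Eilenberg--MacLane functor $H$ ``is designed so that'' $HD^*$ is an $\Omega$-spectrum whose zeroth space represents the Dold--Kan object $\Gamma(D^*[-m])$. That is not true by design. Shipley's $H$ is produced through a zig-zag of Quillen equivalences between $H\Z$-algebra spectra and differential graded algebras, and its levels are not literally the Dold--Kan Eilenberg--MacLane spaces; a priori one only knows it is \emph{some} symmetric ring spectrum functorially attached to the DGA. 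The paper closes exactly this gap by introducing the standard non-monoidal Eilenberg--MacLane functor $H_{\nomo}$ (which \emph{does} have the Dold--Kan description levelwise, so that Proposition \ref{verdier} applies to it directly) and citing \cite[Proposition 5.1]{ss2} for the fact that the natural map $H_{\nomo}C \to HC$ is a stable equivalence of underlying symmetric spectra, compatibly with the ring structures in the homotopy category. Applied objectwise, this gives a stable equivalence of presheaves of symmetric spectra, after which your argument (adjunction plus Verdier) goes through verbatim for $H_{\nomo}$.

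A secondary remark: you identify the multiplicativity of the stable-versus-unstable comparison as ``the main obstacle,'' but the paper does not carry out a separate multiplicative verification at that level; once the comparison $H_{\nomo}(\Ch^*(\Vh_{2*})) \to H(\Ch^*(\Vh_{2*}))$ is known to be a ring isomorphism in the stable homotopy category, the product structure is transported along it, and the rest is the additive identification. So the genuinely non-formal input is the Schwede--Shipley comparison of the two Eilenberg--MacLane functors, not the naturality of the adjunction. With that citation added, your proof is complete and matches the paper's.
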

\begin{proof}
Let $H_{\nomo} \colon \mathrm{Ch}_{\Z} \to \sSp$ be the non-monoidal Eilenberg-MacLane spectrum functor that sends a chain complex $C$ to a symmetric spectrum which represents cohomology with coefficients in $C$. By \cite[Proposition 5.1]{ss2}, the canonical map $H_{\nomo}C \to HC$ is an equivalence of underlying symmetric spectra. Hence, for every object $U \in \mathbf{T}$, the canonical map
\[
H_{\nomo}(\Ch^*(\Vh_{2*}))(U) \to H(\Ch^*(\Vh_{2*}))(U)
\]
is an equivalence of symmetric spectra. By abuse of notation, we denote the pointwise application of $H_{\nomo}$ again by $H_{\nomo}$. Then the induced map of presheaves of symmetric spectra 
\begin{equation}\label{Hequiv}
H_{\nomo} (\Ch^*(\Vh_{2*})) \to H(\Ch^*(\Vh_{2*}))
\end{equation}
is a stable equivalence in $\SpsPre$. The spectrum $H_{\nomo}(\Ch^*(\Vh_{2*}))$ is a ring in the stable homotopy category of presheaves of spectra. Moreover, equivalence (\ref{Hequiv}) induces an isomorphism of rings in the stable homotopy category of presheaves of spectra between $H_{\nomo}(\Ch^*(\Vh_{2*}))$ and $H(\Ch^*(\Vh_{2*}))$.   
Hence it suffices to show that 
\[
\Hom_{\hoSpsPre}(\Sigma^{\infty}_+(X), \Sigma^nH_{\nomo}(\Ch^*(\Vh_{2*}))) \cong H^n(X; \Ch^*(\Vh_{2*}))
\]
is an isomorphism. 

Since $\Sigma^{\infty}$ is left Quillen adjoint to the evaluation functor at the $0$th space, it suffices to show that we have a natural isomorphism
\[
\Hom_{\hosPre}(X, \Gamma(\Ch^*(\Vh_{2j})[-n])) \cong H^{n-2j}(X; \Ch^*(\Vh_{2j})).
\]
But, using our grading conventions, this is the content of Proposition \ref{verdier}. 
\end{proof}

%

\subsection{The singular functor for symmetric spectra}

In this subsection we work again on the site $\mathbf{T}=\Manc$. Our goal now is to show that a constant presheaf of symmetric spectra $E$ represents $E$-cohomology also in $\hoSpsPre$. There are different ways to show this. We use the singular functor as our favorite tool.

Let $\sSpt$ be the category of symmetric spectra of topological spaces. Let $X$ be a complex manifold and $F\in \sSpt$. The levelwise defined spectrum $F^X$ of functions from $X$ to $F$ is again a symmetric spectrum of topological spaces. Applying the singular simplicial set functor levelwise yields then a symmetric spectrum of simplicial sets $\Sh(F^X)$ given in degree $n$ by $\Sh(F_n^X)$. We denote the object in $\SpsPre$, defined by the functor 
\[
X \mapsto \Sh(F^X)
\]
by $\Sing F$. Moreover, for $E\in \sSp$ let $|E|$ be the geometric realization of $E$ in $\sSpt$. Then $\Sing$ also defines a functor
\[
\Sing \colon \sSp \to \SpsPre, ~ E\mapsto \Sing |E|.
\]
 
\begin{prop}\label{stableconstant}
Let $E\in \sSp$ be a symmetric spectrum. We also denote by $E$ the constant presheaf with value $E$. The natural map  
\[
E \to \Sing |E|
\]
is a levelwise equivalence and hence a stable equivalence in $\SpsPre$.  
\end{prop}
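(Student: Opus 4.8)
The plan is to reduce the statement to the space-level result of Proposition \ref{constant} by unwinding the definition of $\Sing$ one level at a time. First I would note that geometric realization in $\sSpt$ is computed levelwise, so the symmetric spectrum of topological spaces $|E|$ has $|E_n|$ as its $n$th space. By the definition of $\Sing \colon \sSp \to \SpsPre$, the presheaf of symmetric spectra $\Sing |E|$ therefore has as its $n$th space the simplicial presheaf
\[
X \mapsto \Sh(|E_n|^X) = \Sing |E_n|,
\]
where on the right I use the notation $\Sing Y$ of Section \ref{section2.1} for a $CW$-space $Y$. Under this identification the level-$n$ component of the natural map $E \to \Sing |E|$ is precisely the natural map $E_n \to \Sing |E_n|$ furnished by Proposition \ref{constant} applied to the pointed simplicial set $K = E_n$.

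Next I would invoke Proposition \ref{constant} directly: for every simplicial set $K$ the natural map $K \to \Sing |K|$ is a local weak equivalence of simplicial presheaves, because the stalks of both sides are canonically weakly equivalent to $K$. Applying this with $K = E_n$ for each $n$ shows that every level map $E_n \to (\Sing |E|)_n$ is a local weak equivalence in $\sPrep$; since the basepoints are preserved and weak equivalences of pointed simplicial presheaves are detected on the underlying objects, the pointedness causes no difficulty. By Definition \ref{defstrict} this is exactly the assertion that $E \to \Sing |E|$ is a projective, i.e.\ levelwise, weak equivalence in $\SpsPre$.

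Finally I would pass from the strict to the stable statement. Since by Definition \ref{stablemodel} the stable local injective (respectively projective) model structure is the left Bousfield localization of the corresponding strict structure, the identity functor from the strict to the stable structure is a left Quillen functor and in particular sends strict weak equivalences to stable equivalences. Hence the levelwise equivalence $E \to \Sing |E|$ is a stable equivalence in $\SpsPre$, as claimed.

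I do not expect a genuine obstacle here beyond bookkeeping: the only point requiring care is the identification of the $n$th space of $\Sing |E|$ with the space-level construction $\Sing |E_n|$ of Section \ref{section2.1}, so that the level maps coincide with those governed by Proposition \ref{constant}. One should also record that, while verifying that $E \to \Sing |E|$ is an honest map of presheaves of symmetric spectra requires compatibility with the structure maps and the $\Sigma_n$-actions (which follows from the functoriality and monoidality of geometric realization and of the singular functor), this equivariance plays no role in the weak-equivalence argument, which is entirely levelwise.
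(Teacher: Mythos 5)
Your proposal is correct and follows the same route as the paper: identify the $n$th level of $\Sing|E|$ with $\Sing|E_n|$, apply Proposition \ref{constant} levelwise, and conclude that a levelwise equivalence is a stable equivalence. The extra bookkeeping you supply (realization being levelwise, Bousfield localization preserving strict equivalences) is exactly what the paper leaves implicit.
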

\begin{proof}
By Proposition \ref{constant}, each map $E_n \to \Sing |E_n| \cong (\Sing |E|)_n$ is a weak equivalence in $\sPrep$. Hence $E \to \Sing |E|$ is a levelwise equivalence in $\SpsPre$.
\end{proof}
\begin{lemma}\label{stablesingfibrant}
For any spectrum $E\in \sSp$, the presheaf of symmetric spectra $\Sing |E|$ is fibrant in the stable local projective model structure on $\SpsPre$.
\end{lemma}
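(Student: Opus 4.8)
The plan is to verify directly the two defining properties of a stably fibrant object. By \cite[Theorem 8.8]{hovey}, recorded right after Definition \ref{omegaspectra}, an object of $\SpsPre$ is fibrant in the stable local projective model structure if and only if it is an $\Omega$-spectrum, that is, each level $(\Sing|E|)_n$ is fibrant in the local projective model structure on $\sPrep$ and each adjoint structure map
\[
(\Sing|E|)_n \to (\Sing|E|)_{n+1}^{S^1}
\]
is a local weak equivalence. So I would reduce the statement to checking these two conditions separately.

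For the levelwise condition I would first record the identification $(\Sing|E|)_n = \Sing|E_n|$, which follows from the definition of $\Sing$ together with the isomorphism $\Sing(Z,Y) = \Sh(Y^Z)$. Since $E_n$ is a simplicial set, its geometric realization $|E_n|$ is a $CW$-complex, so Lemma \ref{singfibrant} applies verbatim: $\Sing|E_n|$ is objectwise a Kan complex and satisfies descent for all hypercovers, hence is local projective fibrant. This step is essentially free once the levelwise description is in place.

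The heart of the argument is the $\Omega$-spectrum condition, which I would check stalkwise, since local weak equivalences are by definition detected on stalks. As in the computation in the proof of Proposition \ref{constant} (reused in Proposition \ref{stableconstant}), the stalk of $(\Sing|E|)_n$ at any point of a complex manifold is naturally weakly equivalent to $\Sh(|E_n|)$, and under this identification the adjoint structure map becomes the map $\Sh(|E_n|) \to \Sh(|E_{n+1}|)^{S^1}$ induced by the realized structure map of $E$. Using that cotensoring with $S^1$ commutes with $\Sh$ up to natural equivalence, so that $\Sh(|E_{n+1}|)^{S^1} \simeq \Sh(\Omega|E_{n+1}|)$, and that the unit $K \to \Sh|K|$ of the realization--singular adjunction is a natural weak equivalence for every simplicial set $K$ (as established in the proof of Proposition \ref{constant}), this map is identified with the adjoint structure map $E_n \to E_{n+1}^{S^1}$ of $E$ itself. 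Since $E$ is an $\Omega$-spectrum, this is a weak equivalence, and therefore the adjoint structure maps of $\Sing|E|$ are stalkwise, hence local, weak equivalences. The hard part will be precisely this last step: one must keep careful track of the compatibility between the singular functor, geometric realization, and cotensoring with $S^1$, and the conclusion genuinely rests on $E$ being an $\Omega$-spectrum (if $E$ is not, one first replaces it by a stably equivalent $\Omega$-spectrum, which does not affect the role of $\Sing|E|$ in representing $E$-cohomology).
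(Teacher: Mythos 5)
Your proof is correct and follows essentially the same route as the paper: levelwise fibrancy comes from Lemma \ref{singfibrant}, and the $\Omega$-spectrum condition is reduced to the adjoint structure maps of $E$ itself; the only cosmetic difference is that you detect the latter weak equivalence on stalks, whereas the paper checks it on sections over each complex manifold $X$ (which gives the local weak equivalence a fortiori). Both arguments genuinely use that $|E_n|\to \Omega|E_{n+1}|$ is a weak equivalence, i.e.\ that $E$ is an $\Omega$-spectrum --- a hypothesis absent from the lemma's statement but implicit in the paper's proof, and which you rightly flag along with the standard fix of replacing $E$ by a stably equivalent $\Omega$-spectrum.
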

\begin{proof}
By Lemma \ref{singfibrant}, each space $\Sing |E|_n$ is fibrant in the local projective model structure on $\sPrep$. Moreover, for any complex manifold $X$, the induced map 
$$\Sh(|E_n|^X) \to \Sh((\Omega |E_{n+1}|)^X) \cong \Omega(\Sh(|E_{n+1}|^X))$$ 
is again a weak equivalence of simplicial sets. Hence $\Sing |E|_n \to \Omega \Sing |E|_{n+1}$ is a weak equivalence in $\sPrep$. 
\end{proof}
\begin{prop}\label{stablesingpretop}
Let $E$ be a symmetric spectrum and let $X$ be a complex manifold. There is a natural isomorphism 
$$\Hom_{\hoSpsPre}(\Sigma^{\infty}_+(X), \Sing |E|) \cong \Hom_{\hosSpt}(\Sigma^{\infty}_+(X), |E|).$$
\end{prop}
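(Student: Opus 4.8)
The plan is to reduce both sides to the unstable comparison already established in Proposition \ref{singpretop} by means of the suspension-spectrum/evaluation adjunction, exactly as in the proof of Proposition \ref{hypersymmgraded}. The geometric content will be carried entirely by Proposition \ref{singpretop}; the stable statement will follow by formal manipulations once the relevant objects are seen to be cofibrant and fibrant.

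First I would observe that $\Sigma^{\infty}_+(X)$ is cofibrant in the stable local projective model structure on $\SpsPre$, being the suspension spectrum of the representable, hence cofibrant, pointed simplicial presheaf $X_+$. On the target side, Lemma \ref{stablesingfibrant} shows that $\Sing|E|$ is stably fibrant, i.e. an $\Omega$-spectrum; and $|E|$ is an $\Omega$-spectrum in $\sSpt$, as is already used in the proof of Lemma \ref{stablesingfibrant}, hence stably fibrant there as well. Consequently both Hom-sets can be computed through the derived adjunction $(\Sigma^{\infty}, \Ev_0)$, where $\Ev_0$ is evaluation at the $0$th space; this is a Quillen adjunction, as recorded in the proof of Proposition \ref{hypersymmgraded}. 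Because the targets are stably fibrant, no fibrant replacement is needed and the right-derived functor of $\Ev_0$ is simply $\Ev_0$.

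Concretely, I would use the levelwise identity $(\Sing|E|)_0 = \Sing|E_0|$ (the realization and the singular construction are applied levelwise, and $|E|_0 = |E_0|$) to obtain
\[
\Hom_{\hoSpsPre}(\Sigma^{\infty}_+(X), \Sing|E|) \cong \Hom_{\hosPrep}(X_+, \Sing|E_0|) \cong \Hom_{\hosPre}(X, \Sing|E_0|),
\]
the last step being the passage from pointed maps out of $X_+$ to unpointed maps out of $X$. The identical argument on the topological side yields
\[
\Hom_{\hosSpt}(\Sigma^{\infty}_+(X), |E|) \cong \Hom_{\hoTop}(X, |E_0|).
\]
Finally I would invoke Proposition \ref{singpretop} with the simplicial set $K = E_0$, which identifies $\Hom_{\hosPre}(X, \Sing|E_0|)$ with $\Hom_{\hoTop}(X, |E_0|)$, and conclude by combining the three displays.

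The main obstacle is the bookkeeping around fibrancy: the reduction to the $0$th space is legitimate only because both targets $\Sing|E|$ and $|E|$ are $\Omega$-spectra, so that $\Ev_0$ already computes its own right derived functor. This is precisely where Lemma \ref{stablesingfibrant} (together with the standing hypothesis that $E$, and hence $|E|$, is an $\Omega$-spectrum) is indispensable. Everything else—the cofibrancy of $\Sigma^{\infty}_+(X)$, the levelwise identification $(\Sing|E|)_0 = \Sing|E_0|$, and the disjoint-basepoint manipulation—is routine, and the substantive input is entirely Proposition \ref{singpretop}.
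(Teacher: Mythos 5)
Your proof is correct and is essentially the paper's own argument made explicit: the paper likewise invokes Lemma \ref{stablesingfibrant} for stable fibrancy of $\Sing|E|$, cofibrancy of $\Sigma^{\infty}_+(X)$, and then identifies both sides with $\pi_0(\Sh(|E|^X))$, which is exactly your reduction to level $0$ and Proposition \ref{singpretop}. One small caveat: there is no standing hypothesis in the paper that $E$ is an $\Omega$-spectrum, so the reduction of $\Hom_{\hosSpt}(\Sigma^{\infty}_+(X),|E|)$ to $\Hom_{\hoTop}(X,|E_0|)$ is not literally licensed by the stated hypotheses --- but the paper's own proof (and the proof of Lemma \ref{stablesingfibrant}) tacitly makes the same assumption, so you are on equal footing with the source.
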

\begin{proof}
By Lemma \ref{stablesingfibrant}, $\Sing |E|$ is fibrant in the stable local projective model structure on $\SpsPre$. Since $\Sigma^{\infty}_+(X)$ is cofibrant, there is a sequence of natural bijections
$$\Hom_{\hoSpsPre}(\Sigma^{\infty}_+(X), \Sing |E|) \cong \pi_0(\Sh(|E|^X)) \cong \Hom_{\hosSpt}(\Sigma^{\infty}_+(X), |E|).$$
\end{proof}
Propositions \ref{stableconstant} and \ref{stablesingpretop} show that the constant presheaf of symmetric spectra given by a symmetric spectrum $E$ represents $E$-cohomology for manifolds also in $\hoSpsPre$. 
\begin{cor}\label{stablecorconstant}
Let $E$ be a symmetric spectrum and let $X$ be a complex manifold. There is a natural isomorphism 
\[
\Hom_{\hoSpsPre}(\Sigma^{\infty}_+(X), E) \cong \Hom_{\hosSpt}(\Sigma^{\infty}_+(X), |E|).
\]
\end{cor}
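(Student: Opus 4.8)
The plan is to chain together the two preceding propositions, since the corollary is essentially their formal consequence. First I would invoke Proposition~\ref{stableconstant}, which asserts that the natural comparison map $E \to \Sing|E|$ is a stable equivalence in $\SpsPre$. By the construction of the stable homotopy category $\hoSpsPre$ as the localization of $\SpsPre$ at the stable equivalences, every stable equivalence descends to an isomorphism in $\hoSpsPre$; in particular the map $E \to \Sing|E|$ becomes invertible there.

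Next I would post-compose with this isomorphism. Applying the representable functor $\Hom_{\hoSpsPre}(\Sigma^{\infty}_+(X), -)$ to an isomorphism in $\hoSpsPre$ yields a bijection, so one obtains a natural bijection
\[
\Hom_{\hoSpsPre}(\Sigma^{\infty}_+(X), E) \xrightarrow{\;\cong\;} \Hom_{\hoSpsPre}(\Sigma^{\infty}_+(X), \Sing|E|).
\]
Naturality in $X$ is automatic here, since $E \to \Sing|E|$ is a fixed morphism of presheaves of symmetric spectra that does not involve $X$, and naturality in $E$ follows from the naturality of the comparison map already recorded in Proposition~\ref{stableconstant}.

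Finally I would identify the right-hand side with the topological mapping set by invoking Proposition~\ref{stablesingpretop}, which supplies exactly the natural isomorphism
\[
\Hom_{\hoSpsPre}(\Sigma^{\infty}_+(X), \Sing|E|) \cong \Hom_{\hosSpt}(\Sigma^{\infty}_+(X), |E|).
\]
Composing the two displayed bijections then produces the asserted natural isomorphism. I expect no genuine obstacle in this argument: the only points meriting attention are that the composite is natural in both $E$ and $X$, which is inherited from the naturality in Propositions~\ref{stableconstant} and~\ref{stablesingpretop}, and, implicitly, that $\Sigma^{\infty}_+(X)$ is cofibrant so that the homotopy-category Hom-sets are insensitive to the weak equivalence $E \to \Sing|E|$. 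This cofibrancy was already exploited in the proof of Proposition~\ref{stablesingpretop} and requires no further justification.
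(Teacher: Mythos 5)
Your argument is correct and is precisely the one the paper intends: the corollary is stated as the immediate consequence of Proposition~\ref{stableconstant} (the map $E \to \Sing|E|$ is a stable equivalence, hence an isomorphism in $\hoSpsPre$) combined with Proposition~\ref{stablesingpretop}. No differences worth noting.
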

%

%
%

\section{Hodge filtered cohomology groups}

\subsection{The definition}

In this section we work on the site $\mathbf{T}=\Manc$ of complex manifolds with the topology defined in Section \ref{section2.1}. 

\begin{defn}
A {\em rationally even spectrum} is a symmetric spectrum $E$ with the property that $\pi_jE\otimes \Q=0$ if $j$ is odd.
\end{defn}

Let $E\in \sSp$ be a rationally even spectrum and let
\[
\iota \colon E \to E \wedge H\C=:E_{\C}
\]
be a map in $\sSp$ which induces for every $n$ the map 
\[
\pi_{2n}(E) \xrightarrow{(2\pi i)^n} \pi_{2n}(E_{\C})
\]
defined by multiplication by $(2\pi i)^n$ on homotopy groups. 

Let $p$ be an integer. Multiplication by $(2\pi i)^p$ on homotopy groups determines a map  
\[
E \xrightarrow{(2\pi i)^p\iota} E \wedge H\C.
\]
Let  
\[
E\wedge H\C \to H(\pi_{2*}E\otimes \C)
\]
be a map in $\sSp$ that induces the isomorphism  
\[
\pi_{2*}(E\wedge H\C) \cong \pi_{2*}E \otimes \C.
\]
Let 
\begin{equation}
\label{tauE}
\tau^E \colon E \to H(\pi_{2*}E\otimes \C)
\end{equation}
be the composition considered as a map in $\SpsPre$. \\
Composition with the canonical map 
\[
H(\pi_{2*}E\otimes \C) \to H(\Omega^*(\pi_{2*}E\otimes \C))
\]
yields a map of presheaves of symmetric spectra on $\Manc$ 
\begin{equation}\label{vertical}
E \to H(\Omega^*(\pi_{2*}E\otimes \C)).
\end{equation}

Given an integer $p$, we define the presheaf of symmetric spectra $\ED(p)$ by the homotopy cartesian square in $\SpsPre$ 
\begin{equation}\label{4.12E}
\xymatrix{
\ED(p) \ar[d] \ar[r] & E \ar[d] \\
H(\Omega^{*\geq p}(\pi_{2*}E\otimes \C)) \ar[r] & H(\Omega^*(\pi_{2*}E\otimes \C)).}
\end{equation}
%

\begin{defn}\label{def4.34}
Let $n$ and $p$ be integers, $\Xh$ be a presheaf of symmetric spectra on $\Manc$, and $E$ be a  rationally even spectrum. The \emph{Hodge filtered $E$-cohomology groups} $E^n_{\Dh}(p)(\Xh)$ are defined as
\[
E^n_{\Dh}(p)(\Xh):=\Hom_{\hoSpsPre}(\Xh, \Sigma^n \ED(p)).
\]
In particular, for a complex manifold $X$, we have
\[
E^n_{\Dh}(p)(X)= \Hom_{\hoSpsPre}(\Sigma_+^{\infty}(X), \Sigma^n \ED(p)).
\]
\end{defn}

\begin{remark}\label{remsplitting}
It follows from the definition of $\ED(p)$ and the splitting of ordinary cohomology theories that rationally Hodge filtered complex cohomology splits in the same way into a sum of rational Deligne cohomology groups as rational $E$-cohomology splits into a sum of ordinary cohomology. More precisely, the definitions are chosen such that there is an isomorphism
\begin{equation}\label{MUDsplitoverQp}
(E\wedge H\Q)_{\Dh}^{n}(p)X  \cong \bigoplus_j H_{\Dh}^{n+2j}(X;\Q(p+j))\otimes \pi_{2j}E.
\end{equation}
\end{remark}

\begin{remark}
For $p=0$, the Deligne complex $\Z_{\Dh}(0)$ is canonically isomorphic to $\Z$ and the cohomology groups $H^n_{\Dh}(X;\Z(0))$ are isomorphic to integral cohomology $H^n(X;\Z)$. But for generalized Hodge filtered $E$-cohomology, it is in general not true that we recover ordinary $E$-cohomology groups if $p=0$. The bottom row in the diagram
\begin{equation}\label{diagramE0}
\xymatrix{
\ED(0) \ar[d] \ar[r] & E \ar[d] \\
H(\Omega^{*\geq 0}(\pi_{2*}MU\otimes \C)) \ar[r] & H(\Omega^*(\pi_{2*}E\otimes \C))}
\end{equation}
is not an equivalence. By definition of 
\[
H(\Omega^{*\geq 0}(\pi_{2*}E\otimes \C)) = \bigvee_j \Sigma^{2j} H(\Omega^{*\geq j}(\pi_{2j}E\otimes \C))
\]
the step of the Hodge filtration that we divide out depends on the grading of $\pi_{2*}E\otimes \C$. In particular, the top row in diagram (\ref{diagramE0}) defining $\ED(0)$ is not an equivalence. 
\end{remark}

\subsection{Functoriality and Mayer-Vietoris sequence}

Let $f \colon \Xh \to \Yh$ be a map of presheaves of symmetric spectra $f \colon \Xh \to \Yh$. Then it follows from definition that there is an induced pullback map 
\[
f^* \colon E^n_{\Dh}(p)(\Yh) \to E^n_{\Dh}(p)(\Xh).
\]
In particular, if $\rho \colon \Uh \to X$ is a hypercovering of a complex manifold $X$, then $\rho$ induces an isomorphism 
\begin{equation}\label{hyperiso}
\rho^* \colon E^n_{\Dh}(p)(X) \xrightarrow{\cong} E^n_{\Dh}(p)(\Uh).
\end{equation}

Furthermore, it is a general fact that when $\Zh$ is the homotopy pullback of a diagram in $\SpsPre$ 
\[
\xymatrix{
 & \Uh \ar[d] \\
\Vh \ar[r] & \Wh,}
\]
there is an induced long exact sequence 
\[
\ldots \to [\Xh, \Omega \Uh] \oplus [\Xh, \Omega \Vh] \to [\Xh, \Omega \Wh] \to [\Xh, \Zh] \to [\Xh, \Uh] \oplus [\Xh,\Vh] \to [\Xh,\Wh].
\]
Since $E(p)$ is defined as the homotopy pullback of (\ref{4.12E}), the Hodge filtered cohomology groups sit in a long exact sequence. For a complex manifold this sequence has the following form.

\begin{prop}\label{les}
For any complex manifold $X$, Hodge filtered $E$-cohomology groups sit in a long exact sequence  
\[
\begin{array}{rl}
\ldots \to H^{n-1}(X; \pi_{2*}E\otimes \C) & \to E^n_{\Dh}(p)(X) \to \\
\to E^n(X)\oplus H^n(X; \Omega^{*\geq p}(\pi_{2*}E\otimes \C)) & \to H^n(X; \pi_{2*}E\otimes \C) \to \ldots
\end{array}
\]
\end{prop}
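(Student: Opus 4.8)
The plan is to apply the general Mayer--Vietoris long exact sequence for a homotopy pullback, recalled immediately before the statement, to the defining homotopy cartesian square \eqref{4.12E}, and then to rewrite each corner using the representability results already established. Concretely, I take the corners of \eqref{4.12E} to be $\Uh = E$, $\Vh = H(\Omega^{*\geq p}(\pi_{2*}E\otimes \C))$, $\Wh = H(\Omega^*(\pi_{2*}E\otimes \C))$ and $\Zh = \ED(p)$, and I feed the test object $\Sigma^{\infty}_+(X)$ into that general fact, shifting the targets by $\Sigma^n$ to index the resulting sequence by cohomological degree.

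Since $\hoSpsPre$ is the homotopy category of a stable model structure it is triangulated, so the loop functor $\Omega$ coincides with the desuspension $\Sigma^{-1}$; in particular the homotopy cartesian square \eqref{4.12E} gives a distinguished (Mayer--Vietoris) triangle $\Zh \to \Uh \oplus \Vh \to \Wh \to \Sigma\,\Zh$, whose middle map is the difference of the two structure maps to $\Wh$. Applying the cohomological functor $[\Sigma^{\infty}_+(X), \Sigma^n(-)]$ then yields, for every $n$, the exact sequence
\[
\cdots \to [\Sigma^{\infty}_+(X), \Sigma^{n-1}\Wh] \to [\Sigma^{\infty}_+(X), \Sigma^n\Zh] \to [\Sigma^{\infty}_+(X), \Sigma^n\Uh]\oplus[\Sigma^{\infty}_+(X), \Sigma^n\Vh] \to [\Sigma^{\infty}_+(X), \Sigma^n\Wh] \to \cdots
\]
in which the term immediately to the left of the $\Zh$-term is the once-desuspended $\Wh$-term, living in degree $n-1$.

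It remains to identify the four kinds of groups. By Definition \ref{def4.34}, $[\Sigma^{\infty}_+(X), \Sigma^n\ED(p)] = E^n_{\Dh}(p)(X)$. By Corollary \ref{stablecorconstant} the constant presheaf $E$ represents $E$-cohomology, so $[\Sigma^{\infty}_+(X), \Sigma^n E] \cong E^n(X)$. Applying Proposition \ref{hypersymmgraded} with the evenly graded $\C$-algebra $\Vh_{2*} = \pi_{2*}E\otimes \C$ gives the natural isomorphisms $[\Sigma^{\infty}_+(X), \Sigma^n H(\Omega^*(\pi_{2*}E\otimes \C))] \cong H^n(X; \pi_{2*}E\otimes \C)$ and $[\Sigma^{\infty}_+(X), \Sigma^n H(\Omega^{*\geq p}(\pi_{2*}E\otimes \C))] \cong H^n(X; \Omega^{*\geq p}(\pi_{2*}E\otimes \C))$. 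In particular the desuspended $\Wh$-term becomes $H^{n-1}(X; \pi_{2*}E\otimes \C)$, exactly the group to the left of $E^n_{\Dh}(p)(X)$ in the assertion. Substituting these isomorphisms produces the claimed long exact sequence.

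The step to watch is the bookkeeping rather than anything conceptual: I must be sure that the single suspension built into the homotopy-pullback triangle places the $\Wh$-term in cohomological degree $n-1$ and not $n$, and that the grading conventions for $\Omega^*(\Vh_{2*})$ and $\Omega^{*\geq p}(\Vh_{2*})$ fixed earlier are fed consistently into Proposition \ref{hypersymmgraded}. Because every identification used is an instance of a result proved above and is natural in $X$, the boundary and comparison maps automatically assemble into a genuine long exact sequence, so there is no substantive obstacle beyond this degree-tracking.
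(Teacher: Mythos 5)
Your proposal is correct and follows essentially the same route as the paper: apply the general homotopy-pullback long exact sequence to the defining square \eqref{4.12E}, then identify the corners via the representability of $E$-cohomology (Proposition \ref{stablesingpretop}/Corollary \ref{stablecorconstant}) and Proposition \ref{hypersymmgraded} for the hypercohomology terms. The extra care you take with the degree shift on the $\Wh$-term is exactly the bookkeeping the paper leaves implicit.
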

\begin{proof}
By the above fact, it suffices to determine the groups in the long exact sequence resulting from the definition of $\ED(p)$ as the homotopy pullback of (\ref{4.12E}). This can be done using Proposition \ref{stablesingpretop} for $E^n(X)$ and using Proposition \ref{hypersymmgraded} for the hypercohomology groups. 
\end{proof}

\begin{remark}
If $X$ is a compact K\"ahler manifold, then the hypercohomology group of $\Omega^{*\geq p}(\pi_{2*}E\otimes \C)$ in the long exact sequence of Proposition \ref{les} detects the Hodge filtration, i.e. there is an isomorphism
\[
H^n(X; \Omega^{*\geq p}(\pi_{2*}E\otimes \C)) \cong \bigoplus_j F^{p+j}H^{n+2j}(X;\pi_{2j}E\otimes \C)=: F^{p+*}H^n(X;\pi_{2*}E\otimes \C).
\]
But for an arbitrary complex manifold $X$, the hypercohomology of $\Omega^{*\geq p}(\pi_{2*}E\otimes \C)$ is a much less well-behaved group. We will see below how this defect can be fixed for smooth complex algebraic varieties.
\end{remark}

For a cofibration of presheaves of symmetric spectra $i \colon \Yh \to \Xh$, for example the map induced by a monomorphism of simplicial presheaves, let $\Xh/\Yh$ be the quotient in $\SpsPre$. 
We define relative Hodge filtered $E$-cohomology groups to be
\begin{equation}\label{relativegroups}
E^n_{\Dh}(p)(\Xh,\Yh):= \Hom_{\hoSpsPre}(\Xh/\Yh, \Sigma^n \ED(p)).
\end{equation}

Just as in topology, one proves the following result. 
\begin{prop}\label{localization}
These relative groups sit in a long exact sequence 
\begin{equation}\label{hfiberseq}
\ldots \to E^{n-1}_{\Dh}(p)(\Xh,\Yh) \to E^n_{\Dh}(p)(\Xh) \to E^n_{\Dh}(p)(\Yh) \to E^n_{\Dh}(p)(\Xh,\Yh) \to \ldots 
\end{equation}
induced by the cofiber sequence $\Yh \to \Xh \to \Yh/\Xh$.
\end{prop}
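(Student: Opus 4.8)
The plan is to treat this as a purely formal consequence of the fact that $\hoSpsPre$ is a triangulated category and that $\Hom_{\hoSpsPre}(-,\Sigma^n\ED(p))$ is a representable, hence cohomological, contravariant functor on it. The internal structure of $\ED(p)$ — in particular its description \eqref{4.12E} as a homotopy pullback — plays no role here: all that matters is that $\ED(p)$ is a fixed object of $\hoSpsPre$, and that this homotopy category is the stable homotopy category attached to the stable local injective (equivalently, by Theorem \ref{stablecomparemodel}, stable local projective) model structure on $\SpsPre$. Being the homotopy category of a stable model category, $\hoSpsPre$ is triangulated, with $\Sigma$ an autoequivalence and distinguished triangles arising from cofiber sequences.

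First I would promote the given cofibration $i\colon \Yh\to\Xh$ to a distinguished triangle. Because $i$ is a cofibration (and, replacing $\Yh$ cofibrantly if necessary, between cofibrant objects), the strict quotient $\Xh/\Yh$ formed in $\SpsPre$ computes the homotopy cofiber of $i$. Hence there is a distinguished triangle
\[
\Yh \xrightarrow{\;i\;} \Xh \to \Xh/\Yh \to \Sigma\Yh
\]
in $\hoSpsPre$, whose iterated rotations give the full Puppe sequence. I would then apply the contravariant functor $\Hom_{\hoSpsPre}(-,\Sigma^n\ED(p))$. Since it is representable in its second variable (of the form $[-,W]$ with $W=\Sigma^n\ED(p)$), it is cohomological and carries the Puppe sequence to a long exact sequence of abelian groups. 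It remains to identify the terms: by the defining equation \eqref{relativegroups} one has $\Hom_{\hoSpsPre}(\Xh/\Yh,\Sigma^n\ED(p))=E^n_{\Dh}(p)(\Xh,\Yh)$, while $\Hom_{\hoSpsPre}(\Xh,\Sigma^n\ED(p))=E^n_{\Dh}(p)(\Xh)$ and similarly for $\Yh$; and the suspension isomorphism $\Hom_{\hoSpsPre}(\Sigma\Yh,\Sigma^n\ED(p))\cong\Hom_{\hoSpsPre}(\Yh,\Sigma^{n-1}\ED(p))=E^{n-1}_{\Dh}(p)(\Yh)$ (and the analogue for $\Xh$) pins down the degree shifts. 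Assembling these identifications across the Puppe sequence produces exactly the long exact sequence \eqref{hfiberseq}.

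The main obstacle — really the only nonformal point — is the bookkeeping: justifying that the strict cofiber represents the homotopy cofiber (this is where one genuinely uses that $i$ is a cofibration, so that no further replacement of the pair is required) and keeping the degree shifts and connecting homomorphisms coherent under the suspension–desuspension identifications in the stable category. Everything else is an instance of the general principle that representable functors on a triangulated category are cohomological, which is precisely the formal content behind the phrase ``just as in topology'' in the statement. I do not expect any difficulty requiring the specific geometry of $\Manc$ or the homotopy-pullback definition of $\ED(p)$.
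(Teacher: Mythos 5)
Your argument is correct and is exactly the one the paper intends: the paper gives no details beyond the phrase ``just as in topology,'' and your filling-in --- the cofibration yields a distinguished triangle in the triangulated category $\hoSpsPre$, and the representable functor $\Hom_{\hoSpsPre}(-,\Sigma^n\ED(p))$ sends the Puppe sequence to a long exact sequence --- is the standard argument being invoked. No divergence from the paper's approach, and no gaps.
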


Moreover, there is a Mayer-Vietoris type long exact sequence for Hodge filtered cohomology groups. 
\begin{prop}\label{mvseq}
Let $X$ be a complex manifold and $U$ and $V$ be two open submanifolds such that $U \cup V = X$. Then there is a long exact Mayer-Vietoris sequence 
\[
\ldots \to E^n_{\Dh}(p)(X) \to E^n_{\Dh}(p)(U) \oplus E^n_{\Dh}(p)(V) \to E^n_{\Dh}(p)(U\cap V) \to E^{n+1}_{\Dh}(p)(X) \to \ldots
\]
\end{prop}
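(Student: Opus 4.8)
The plan is to deduce the Mayer--Vietoris sequence (Proposition \ref{mvseq}) by comparing the covering $X = U \cup V$ to the homotopy pushout it gives rise to in $\SpsPre$, and then feeding that pushout into the contravariant functor $\Hom_{\hoSpsPre}(-, \Sigma^n \ED(p))$. First I would observe that since $\{U, V\}$ is an open cover of the complex manifold $X$, the Čech nerve of this cover is a hypercover of $X$; by the descent/hypercover invariance already recorded in \eqref{hyperiso}, the associated suspension spectra fit into a homotopy cocartesian square
\[
\xymatrix{
\Sigma^{\infty}_+(U \cap V) \ar[d] \ar[r] & \Sigma^{\infty}_+(U) \ar[d] \\
\Sigma^{\infty}_+(V) \ar[r] & \Sigma^{\infty}_+(X)}
\]
in $\SpsPre$. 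Concretely, I would check that the representable presheaf of $X$ is the pushout of the representables of $U$ and $V$ along $U \cap V$ in $\sPrep$ and that this pushout is a homotopy pushout because the inclusions are monomorphisms, hence injective cofibrations; applying $\Sigma^{\infty}_+$, a left Quillen functor, preserves this homotopy cocartesian square.

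Next I would apply the contravariant functor $[-, \Sigma^n \ED(p)] := \Hom_{\hoSpsPre}(-, \Sigma^n \ED(p))$ to the square above. A homotopy cocartesian square of presheaves of symmetric spectra is taken by any representable functor $[-, \mathcal{Z}]$ to a homotopy cartesian square of mapping spectra, and hence yields a Mayer--Vietoris long exact sequence on homotopy groups exactly as in the topological case. This is the same formal mechanism used to produce the long exact sequences of Proposition \ref{les} and Proposition \ref{localization}: the stable homotopy category $\hoSpsPre$ is triangulated, so a homotopy pushout becomes a cofiber sequence, and $[-, \Sigma^n \ED(p)]$ sends cofiber sequences to long exact sequences of abelian groups. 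Unwinding the definition $E^n_{\Dh}(p)(Y) = [\Sigma^{\infty}_+(Y), \Sigma^n \ED(p)]$ then gives precisely the claimed sequence
\[
\ldots \to E^n_{\Dh}(p)(X) \to E^n_{\Dh}(p)(U) \oplus E^n_{\Dh}(p)(V) \to E^n_{\Dh}(p)(U\cap V) \to E^{n+1}_{\Dh}(p)(X) \to \ldots
\]
with the connecting map raising degree by one.

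The main obstacle, and the step deserving the most care, is establishing that the square of suspension spectra is genuinely homotopy cocartesian in $\SpsPre$. The subtlety is that $X = U \cup V$ is a pushout of representable presheaves on $\Manc$ only after sheafification: the naive presheaf pushout need not agree with the representable presheaf of $X$ on test manifolds mapping into $X$ but not factoring through $U$ or $V$. This is exactly why I invoke the hypercover/descent property \eqref{hyperiso} rather than a strict pushout identity — the Čech nerve of $\{U, V\}$ is a hypercover, so locally (stalkwise) the comparison map is a weak equivalence, which is all that is needed for the square to be homotopy cocartesian in the local (projective or injective) model structure. An alternative, cleaner route is to reduce to known statements: by the hypercover invariance, $E^n_{\Dh}(p)(X)$ is computed by the descent spectral sequence for the cover $\{U,V\}$, whose truncation to a two-element cover degenerates into precisely the Mayer--Vietoris sequence. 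Either way, once homotopy cocartesianness is secured, the remainder is the standard formal translation of a pushout into a long exact sequence and requires no genuine computation.
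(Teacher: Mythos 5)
Your argument is correct and is essentially the paper's proof, which simply applies $\Hom_{\hoSpsPre}(\Sigma^{\infty}_+(-),\Sigma^n\ED(p))$ to the square $U\cap V \to U, V \to X$ of simplicial presheaves. The only difference is that you spell out the point the paper leaves implicit — that the strict presheaf pushout is a homotopy pushout (the inclusions being monomorphisms) and that its comparison map to the representable presheaf $X$ is a stalkwise, hence local, weak equivalence — which is exactly the right justification.
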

\begin{proof}
The long exact sequence is induced by applying the functor 
\[
\Hom_{\hoSpsPre}(\Sigma^{\infty}_+(-), \Sigma^n \ED(p))
\]
to the pushout diagram 
\[
\xymatrix{
U\cap V \ar[d] \ar[r] & U \ar[d] \\
V \ar[r] & X}
\]
of simplicial presheaves corresponding to the covering of $X$ by $U$ and $V$. 
\end{proof}

Finally, let $g \colon E \to F$ be a morphism of rationally even spectra and $\tau^F \colon F \to H(\pi_{2*}F\otimes \C)$ be a map with the properties described at the beginning of the previous section (for $\tau^E$). Then we can choose  $\tau^E \colon E\to H(\pi_{2*}E\otimes \C)$ with the properties we required above such that we get a commutative diagram  
\[
\xymatrix{
E \ar[r] \ar[d]_{\tau^E} & F \ar[d]^{\tau^F} \\
H(\pi_{2*}E\otimes \C) \ar[r] & H(\pi_{2*}F\otimes \C).}
\]
Together with the induced map on filtered forms we obtain a commutative diagram of presheaves of symmetric spectra 
\begin{equation}\label{spectramapdiag}
\xymatrix{
E \ar[d] \ar[r] & H(\Omega^{*}(\pi_{2*}E\otimes \C)) \ar[d] & \ar[l] 
H(\Omega^{*\geq p}(\pi_{2*}E\otimes \C))  \ar[d] \\
F \ar[r] & H(\Omega^{*}(\pi_{2*}F\otimes \C)) & \ar[l] 
H(\Omega^{*\geq p}(\pi_{2*}F\otimes \C)).}
\end{equation}
This diagram induces a map of homotopy pullbacks 
\[
g(p) \colon \ED(p) \to \FD(p)
\]
and hence map of Hodge filtered cohomology groups  
\[
g_{\Dh}(p) \colon E^n_{\Dh}(p)(\Xh) \to F^n_{\Dh}(p)(\Xh)
\]
for every presheaf of symmetric spectra $\Xh$.

\begin{prop}\label{4.13}
Let $g \colon E \to F$ be a stable equivalence of spectra and let $\tau^F \colon F \to H(\pi_{2*}F\otimes \C)$ be a map with the properties described above. Then the map 
\[
g(p) \colon \ED(p) \to \FD(p)
\]
is a stable equivalence in $\SpsPre$. In particular, the induced homomorphism 
\[
g_{\Dh}(p) \colon E^n_{\Dh}(p)(\Xh) \to F^n_{\Dh}(p)(\Xh)
\]
is an isomorphism for every $\Xh \in \SpsPre$. 
\end{prop}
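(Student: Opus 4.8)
The plan is to treat this as a statement about a map of homotopy pullback squares. By construction $\ED(p)$ and $\FD(p)$ are the homotopy pullbacks of the two rows of cospans appearing in \eqref{4.12E}, and $g(p)$ is the map induced by the commutative diagram \eqref{spectramapdiag}. I would therefore reduce the claim to the assertion that the three ``input'' corner maps of \eqref{spectramapdiag}, namely
\[
E \to F, \qquad H(\Omega^{*}(\pi_{2*}E\otimes\C)) \to H(\Omega^{*}(\pi_{2*}F\otimes\C)), \qquad H(\Omega^{*\geq p}(\pi_{2*}E\otimes\C)) \to H(\Omega^{*\geq p}(\pi_{2*}F\otimes\C)),
\]
are stable equivalences in $\SpsPre$; the homotopy invariance of homotopy pullbacks then propagates this to the map $g(p)$ of pullback corners.

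The first of these maps is a stable equivalence by hypothesis. For the remaining two, I would use that a stable equivalence of symmetric spectra induces an isomorphism on homotopy groups, so that $\pi_{2*}(g)\otimes\C \colon \pi_{2*}E\otimes\C \to \pi_{2*}F\otimes\C$ is an isomorphism of evenly graded $\C$-vector spaces. Since the assignments $\Vh_{2*}\mapsto \Omega^{*}(\Vh_{2*})$ and $\Vh_{2*}\mapsto \Omega^{*\geq p}(\Vh_{2*})$ are additive and functorial in the coefficient module, this isomorphism induces isomorphisms of the corresponding presheaves of cochain complexes; applying the lax monoidal functor $H$ then yields level isomorphisms, and in particular stable equivalences, between the two form corners. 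This disposes of all three input maps.

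To make the homotopy invariance step precise, and to avoid relying on properness of the localized structure, I would argue exactly as in the justification of Proposition \ref{D.6}, using the long exact sequence recalled just before Proposition \ref{les}. Fixing an arbitrary $\Xh\in\SpsPre$, the diagram \eqref{spectramapdiag} induces a map between the two long exact sequences
\[
\cdots \to [\Xh,\Omega\Uh]\oplus[\Xh,\Omega\Vh] \to [\Xh,\Omega\Wh] \to [\Xh,\Zh] \to [\Xh,\Uh]\oplus[\Xh,\Vh] \to [\Xh,\Wh]
\]
attached to the two homotopy pullbacks, where $\Zh$ is $\ED(p)$ respectively $\FD(p)$ and $\Uh,\Vh,\Wh$ run through the three input corners. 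The four outer vertical maps involve only the corner objects and their loops, hence are isomorphisms by the previous paragraph, so the five lemma forces $[\Xh,g(p)]$ to be an isomorphism as well. As this holds for every $\Xh$, Yoneda in $\hoSpsPre$ shows that $g(p)$ is an isomorphism in the homotopy category, i.e. a stable equivalence. The final assertion is then immediate: by Definition \ref{def4.34} we have $E^n_{\Dh}(p)(\Xh)=[\Xh,\Sigma^n\ED(p)]$, and since $\Sigma^n g(p)$ is again a stable equivalence it induces the isomorphism $g_{\Dh}(p)$.

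The only genuinely substantive point, and the one I expect to require the most care, is the claim that the two form corners are stable equivalences: everything rests on passing from the hypothesis that $g$ is a stable equivalence to the isomorphism $\pi_{2*}(g)\otimes\C$ and then invoking functoriality of $\Omega^{*}(-)$, $\Omega^{*\geq p}(-)$ and $H$ in the coefficients. The remaining structural input, that the chosen $\tau^E$ makes \eqref{spectramapdiag} commute, has already been arranged in the discussion preceding the statement, so no further homotopy-coherence bookkeeping is needed.
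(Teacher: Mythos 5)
Your proposal is correct and follows essentially the same route as the paper: the paper likewise reduces to observing that the three vertical maps in diagram \eqref{spectramapdiag} are (pointwise, hence stable) equivalences --- the form corners because $\pi_{2*}(g)\otimes\C$ is an isomorphism of coefficients --- and then invokes homotopy invariance of homotopy pullbacks. Your extra justification of that last step via the long exact sequence, the five lemma and Yoneda is just an expanded version of what the paper leaves implicit.
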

\begin{proof}
Since $g$ is a homotopy equivalence, the vertical maps of diagram (\ref{spectramapdiag}) are pointwise equivalences and hence stable equivalences in $\SpsPre$. Hence the induced map of homotopy pullbacks is a stable equivalence as well. 
\end{proof}
\begin{remark}
The definition of Hodge filtered $E$-cohomology groups involves the
choice of a map 
\[
\tau^{E} \colon E\wedge H\C\to H(\pi_{2\ast}E\otimes\C).
\]
By our assumption on $E$, the space of such choices is simply
connected.     This implies that the Hodge filtered $E$-cohomology
groups of Definition \ref{def4.34} are independent of this choice, and a map 
$g \colon E\to F$ of rationally even spectra induces a well defined map from
Hodge filtered $E$-cohomology groups to Hodge filtered $F$ cohomology groups.
\end{remark}

\subsection{The fundamental short exact sequence for compact K\"ahler manifolds}

From now on we assume that $X$ is a compact K\"ahler manifold. Let $p$ be an integer and let $E$ be a rationally even spectrum.  
We will show that the group $\ED^{2p}(p)X$ fits into a short exact sequence that is a natural generalization of the short exact sequence for Deligne cohomology (\ref{sesDeligneintro}). The argument is analogous to the case of Deligne cohomology for which we refer to \cite[\S 12]{voisinbook}.

Therefore, we split the long exact sequence of Proposition \ref{les} into a short exact sequence. For a compact K\"ahler manifold $X$, the Hodge filtration yields an isomorphism  
\[
H^{n+2j}(X; \Omega^{*\geq p+j}(\pi_{2j}E\otimes \C)) \cong F^{p+j}H^{n+2j}(X; \pi_{2j}E \otimes \C).
\]
There is an analogue of the intermediate Jacobian defined which we will denote by $J^{2p-1}_{E}(X)$. It arises as the cokernel of the map
\begin{equation}\label{cokernel}
E^{2p-1}X \oplus F^{p+*}H^{2p-1}(X; \pi_{2*}E \otimes \C) \to H^{2p-1}(X; \pi_{2*}E \otimes \C)
\end{equation}
where we use the notation
\[
F^{p+*}H^{2p-1}(X; \pi_{2*}E \otimes \C):=\bigoplus_j F^{p+j}H^{2p-1+2j}(X; \pi_{2j}E \otimes \C).
\]
For every $j$, the Hodge filtration yields a decomposition as a direct sum
\[
H^{2p-1+2j}(X; \pi_{2j}E \otimes \C)\cong F^{p+j}H^{2p-1+2j}(X; \pi_{2j}E \otimes \C) \oplus \overline{F^{p+j}H^{2p-1+2j}(X; \pi_{2j}E \otimes \C)}
\]
where the bar denotes the image under complex conjugation.

Since this is a direct sum decomposition, the intersection
\[
F^{p+*}H^{2p-1}(X; \pi_{2*}E \otimes \C) \cap H^{2p-1}(X; \pi_{2*}E \otimes \R) =\{0\}
\]
is trivial. Considering $F^{p+*}H^{2p-1}(X; \pi_{2*}E \otimes \C)$ as a subspace of $E^{2p-1}(X)\otimes \C$ we see that the map
\[
E^{2p-1}(X)\otimes \R \to E^{2p-1}(X)\otimes \C/F^{p+*}H^{2p-1}(X; \pi_{2*}E \otimes \C)
\]
is an isomorphism of $\R$-vector spaces. Therefore, the lattice 
\[
E^{2p-1}(X) \subset E^{2p-1}(X)\otimes \R
\]
is a lattice in the $\C$-vector space 
\[
E^{2p-1}(X)\otimes \C/F^{p+*}H^*(X; \pi_{2*}E \otimes \C)^{2p-1}.
\]

\begin{defn}
We define the $p$th generalized Jacobian of $X$ to be the compact complex torus  
\[
J_E^{2p-1}(X):= E^{2p-1}(X)\otimes \C/(F^{p+*}H^*(X; \pi_{2*}E \otimes \C)^{2p-1} + E^{2p-1}(X)).
\]
\end{defn}

%
%
%

\begin{remark}\label{remrealliegroup}
The $p$th generalized Jacobian of $X$ is isomorphic to the group
$E^{2p-1}(X)\otimes \R/\Z$. 
In particular, $J_E^{2p-1}(X)$, as a real Lie group, is a homotopy
invariant of $X$. Though as a complex Lie group it depends on the
complex structure of $X$.   Since $X$ is compact,
there is an isomorhism $E^{\ast}(X)\otimes \Q\approx
H^{\ast}(X;\Q)\otimes \pi_{\ast}E$.  When $E$ has a homotopy
associative multiplication, this means that $\oplus_{p} J^{2p-1}$ is a
{\em flat} $\pi_{\ast}E$-module.
%
\end{remark}

To complete the picture, let $\HdgE^{2p}(X)$ be the subgroup of $E^{2p}(X)$ that is given as the pullback
\begin{equation}\label{foot}
\xymatrix{
\HdgE^{2p}(X) \ar[d] \ar[r] & E^{2p}(X) \ar[d] \\ 
\bigoplus_j F^{p+j}H^{2p+2j}(X; \pi_{2j}E \otimes \C) \ar[r] & \bigoplus_jH^{2p+2j}(X; \pi_{2j}E \otimes \C).}
\end{equation}
The group $\HdgE^{2p}(X)$ is determined by the Hodge structure on the cohomology of the K\"ahler manifold $X$. The Hodge decomposition of the lower right corner of (\ref{foot}) yields a splitting as a direct sum 
\[
H^{*}(X; \pi_{2*}E\otimes \C)^{2p}= \bigoplus_{j \geq 0}\left(\bigoplus_{s+t=2(p+j)} H^{s,t}(X;\C)\otimes \pi_{2j}E\right).
\]
By Hodge symmetry, we see that $\HdgE^{2p}(X)$ is the subgroup of $E^{2p}(X)$ consisting of those elements whose images under the map 
\[
E^{2p}(X) \to \bigoplus_{j}\left(\bigoplus_{s+t=2(p+j)} H^{s,t}(X;\C)\otimes \pi_{2j}E\right)
\]
lie in the groups $H^{p+j, p+j}(X;\C)\otimes \pi_{2j}E$ for $j \in \Z$.

Summarizing we obtain the following theorem whose second assertion follows immediately from Theorem \ref{D.6}. 
\begin{theorem}\label{thmfundses}
Let $p$ be an integer, $X$ be a compact K\"ahler manifold and let $E$ be a rationally even spectrum. The Hodge filtered complex cohomology groups $\ED^{2p}(p)(X)$ fit into the short exact sequence
\begin{equation}\label{ses}
0\to J_E^{2p-1}(X) \to \ED^{2p}(p)(X) \to \HdgE^{2p}(X)\to 0
\end{equation}
where $\HdgE^{2p}(X)$ is the pullback defined by (\ref{foot}).

Moreover, if there is a map $E \to H\Z$ of symmetric spectra, we obtain an induced map of short exact sequences 
\begin{equation}\label{mapofses}
\xymatrix{
0 \ar[r] & J_E^{2p-1}(X) \ar[d] \ar[r] & \ED^{2p}(p)X \ar[d] \ar[r] & \HdgE^{2p}X \ar[d]\ar[r]&0\\
0 \ar[r] & J^{2p-1}(X) \ar[r] & H_{\Dh}^{2p}(X;\Z(p)) \ar[r] & \Hdg^{2p}(X;\Z) \ar[r] & 0} 
\end{equation}
\end{theorem}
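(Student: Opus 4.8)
The plan is to read off the sequence directly from the long exact sequence of Proposition \ref{les}, specialized to $n=2p$ and simplified by the compact K\"ahler hypothesis. First I would record the relevant portion of that long exact sequence around degree $2p$. On a compact K\"ahler manifold the Hodge filtration identifies the hypercohomology of the filtered forms with the Hodge filtered cohomology, namely $H^n(X;\Omega^{*\geq p}(\pi_{2*}E\otimes\C))\cong F^{p+*}H^n(X;\pi_{2*}E\otimes\C)$, by the $E_1$-degeneration recorded in the remark following Proposition \ref{les}. Substituting this in degrees $2p-1$ and $2p$, the sequence becomes
\begin{align*}
E^{2p-1}(X)\oplus F^{p+*}H^{2p-1}(X;\pi_{2*}E\otimes\C) &\xrightarrow{\psi'} H^{2p-1}(X;\pi_{2*}E\otimes\C)\xrightarrow{\partial}\ED^{2p}(p)(X)\\
&\xrightarrow{\pi} E^{2p}(X)\oplus F^{p+*}H^{2p}(X;\pi_{2*}E\otimes\C)\xrightarrow{\psi} H^{2p}(X;\pi_{2*}E\otimes\C).
\end{align*}
Because the sequence is the one attached to the homotopy pullback square (\ref{4.12E}), both $\psi'$ and $\psi$ are the difference of the regulator-type map induced by $\tau^E$ on $E$-cohomology and the inclusion of the filtered piece; I would verify this by unwinding the identifications of Propositions \ref{stablesingpretop} and \ref{hypersymmgraded} that supply the individual terms.

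Next I would identify the two outer groups. The map $\psi'$ is precisely the map (\ref{cokernel}) whose cokernel is, by definition, $J_E^{2p-1}(X)$ (using the isomorphism $E^{2p-1}(X)\otimes\C\cong H^{2p-1}(X;\pi_{2*}E\otimes\C)$ for compact $X$); hence $\partial$ induces an isomorphism $\coker(\psi')\cong\Imm(\partial)=\Ker(\pi)$ and so an injection $J_E^{2p-1}(X)\hookrightarrow\ED^{2p}(p)(X)$. At the other end, $\Ker(\psi)$ consists of those pairs whose two images in $H^{2p}(X;\pi_{2*}E\otimes\C)$ coincide, and since the inclusion of $F^{p+*}H^{2p}$ is injective this is exactly the fibre product (\ref{foot}) defining $\HdgE^{2p}(X)$. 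Exactness at the middle then gives that $\pi$ has image $\Ker(\psi)=\HdgE^{2p}(X)$ and kernel $J_E^{2p-1}(X)$, which is the short exact sequence (\ref{ses}).

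For the second assertion I would argue by naturality. A map $g\colon E\to H\Z$ of symmetric spectra induces, via diagram (\ref{spectramapdiag}) and the resulting map $g(p)\colon\ED(p)\to(H\Z)_{\Dh}(p)$, a morphism of the long exact sequences of Proposition \ref{les} and hence of the short exact sequences extracted above. It then remains to recognize the target as the classical Deligne sequence (\ref{sesDeligneintro}): since $\pi_{2j}H\Z=0$ for $j\neq0$, the groups $J^{2p-1}_{H\Z}(X)$ and $\HdgE^{2p}(X)$ for $E=H\Z$ collapse to the ordinary intermediate Jacobian and the group of integral Hodge classes, while $(H\Z)^{2p}_{\Dh}(p)(X)\cong H^{2p}_{\Dh}(X;\Z(p))$ by Proposition \ref{D.6}. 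This yields the map of short exact sequences (\ref{mapofses}).

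The genuinely analytic content---the degeneration of the Hodge-to-de Rham spectral sequence and the fact that $F^{p+*}H^{2p-1}\cap H^{2p-1}(X;\pi_{2*}E\otimes\R)=\{0\}$, which is what makes $J_E^{2p-1}(X)$ a torus---has already been packaged into the discussion preceding the statement, so the theorem is largely a formal assembly. I expect the main obstacle to be not analysis but bookkeeping: matching the connecting homomorphism $\partial$ and the projection $\pi$ of the homotopy-pullback long exact sequence with the explicit presentations of $J_E^{2p-1}(X)$ as the cokernel (\ref{cokernel}) and of $\HdgE^{2p}(X)$ as the pullback (\ref{foot}). In particular, checking that the $E$-cohomology contribution in $\psi$ and $\psi'$ is the $\tau^E$-regulator, and that it enters with the correct sign relative to the filtered inclusion, is the step demanding the most care.
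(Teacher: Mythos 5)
Your argument is essentially the paper's own: the authors likewise split the long exact sequence of Proposition \ref{les} at $n=2p$, use the compact K\"ahler identification of the hypercohomology of $\Omega^{*\geq p}(\pi_{2*}E\otimes\C)$ with $F^{p+*}H^{*}$, recognize the cokernel of (\ref{cokernel}) as $J_E^{2p-1}(X)$ and the kernel of the difference map as the pullback (\ref{foot}), and obtain the map of sequences by naturality together with Proposition \ref{D.6}. The proposal is correct and matches the paper's route, including the attention to identifying the boundary and projection maps with the explicit presentations of the two outer groups.
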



\section{Hodge filtered complex bordism}


In this section we focus on the case of Hodge filtered complex cohomology groups defined via the Thom spectrum $MU$ of complex bordism. We recall that the ring $\pi_{2*}MU$ is isomorphic as a graded ring to the polynomial ring $\Z[x_2, x_4, \ldots]$ where for each $i\geq 0$, $x_{2i}$ is a generator in degree $2i$. 

Before we apply the construction let us have a closer look at the choice of the map
\[
MU \to H(\pi_{2*}MU\otimes \C).
\]
Although any two choices give the same cohomology groups, we have to take a little more care in order to get a multiplicative structure. We explain this point only for the example of $MU$ and leave it to the reader to pick his favorite spectrum $E$ and to do the necessary adjustments.

For $E=MU$, the three corners of the diagram corresponding to (\ref{4.12E}) that would define the Hodge filtered cobordism spectrum as a homotopy pullback all carry a natural product structure. These structures will induce a natural product structure on Hodge filtered cobordism groups, once we make sure that the maps are multiplicative, i.e. preserve these product structures. By the construction of monoidal Eilenberg-MacLane spectra, the map  
\[
H(\Omega^{*\geq p}(\pi_{2*}MU\otimes \C)) \to H(\Omega^*(\pi_{2*}MU\otimes \C))
\]
induced by the canonical inclusion of complexes is a morphism of monoids in $\SpsPre$.

For the other map we make use of the fact that the rationalized
spectrum $MU_{\Q}\approx H\Q\wedge MU$ is a free commutative algebra
over $H\Q$, and so the commutative $S$-algebra maps $MU\to E$ to any
rational commutative $\Sph$-algebra $E$ are in one to one correspondence
with the ring homomorphisms $\pi_{\ast}MU\to \pi_{\ast}E$.  The
correspondence is the one associating to a map its effect on homotopy
groups.  We can therefore specify, up to a choice of point in a simply
connected space, a map of commutative $\Sph$-algebras
\[
\iota \colon MU\to H(\pi_{\ast}MU\otimes\C),
\]
by requiring that for every $n$ the induced map 
\[
\pi_{2n}\iota \colon \pi_{2n}MU\to \pi_{2n}(H(\pi_{\ast}MU\otimes\C)) = \pi_{2n}MU\otimes \C
\]
is multiplication by $(2\pi i)^{n}$.  We similarly obtain a map of commutative $\Sph$-algebras 
\[
\tau':= \bigvee_{p\in \Z} (\tau')^{p} \colon \bigvee_{p\in\Z} MU \to \bigvee_{p\in\Z} 
H(\pi_{2\ast}MU\otimes \C)
\]
in which $\pi_{\ast}(\tau')^{p} = (2\pi i)^{p}\pi_{\ast}\iota$.   Composing with the de Rham equivalence 
\[
\bigvee_{p\in\Z}  H(\pi_{2\ast}MU\otimes \C) \xrightarrow{\approx} \bigvee_{p\in\Z} 
H(\Omega(\pi_{2\ast}MU\otimes C))
\]
gives a map
\[
\tau:= \bigvee_{p\in \Z} \tau^{p} \colon \bigvee_{p\in\Z} MU \to \bigvee_{p\in\Z} 
H(\Omega(\pi_{2\ast}MU\otimes C))
\]
of commutative $\Sph$-algebras in {\em presheaves of symmetric spectra}, whose stalkwise effect
on homotopy groups is 
\[
\pi_{\ast}\tau^{p} = (2\pi i)^{p}\pi_{\ast}\iota.
\]

We now define a presheaf of commutative $\Sph$-algebras 
\begin{equation}
\label{eq:2}
\bigvee_{p\in\Z}\MUD(p)
\end{equation}
by the homotopy cartesian square 
\newcommand{\mud}{MU_{\Dh}}
\begin{equation}
\label{eq:1}
\xymatrix{
*++!{\displaystyle\bigvee_{p\in\Z}\MUD(p)}  \ar[r]\ar[d]  &  *++!{\displaystyle\bigvee_{p\in\Z} MU} \ar[d]^{\tau}\\
*++!{\displaystyle\bigvee_{p\in\Z} H(\Omega^{\ast\ge p}(\pi_{2\ast}MU\otimes\C))}  \ar[r]       &
*++{\displaystyle\bigvee_{p\in\Z} H(\Omega^{\ast}(\pi_{2\ast}MU\otimes\C))},
}
\end{equation}
and for every presheaf of symmetric spectra $\Xh$, and $p\in\Z$ the {\em Hodge filtered complex bordism groups}
\[
\MUD^{n}(p) := \Hom_{\hoSpsPre}(\Xh, \Sigma^n \MUD(p)). 
\]  

This defines in particular, for every complex manifold $X$, Hodge filtered complex bordism groups $\mud^{n}(p)(X)$. Since \eqref{eq:2} is a commutative $\Sph$-algebra, we get the following multiplicative structure on Hodge filtered complex bordism groups of complex manifolds.
\begin{theorem}\label{products}
For every complex manifold $X$, there is a multiplicative structure on Hodge filtered complex bordism groups  
\[
\MUD^n(p)(X)\otimes \MUD^m(q)(X) \to \MUD^{n+m}(p+q)(X)
\]
induced by the structure of a commutative $\Sph$-algebra in $\SpsPre$ on \eqref{eq:2}. 
%
The multiplication is graded-commutative in the sense that for $\alpha \in \MUD^n(p)(X)$, $\beta \in \MUD^m(q)(X)$ we have
\[
\alpha \beta = (-1)^{n+m}\beta \alpha \in \MUD^{n+m}(p+q)(X).
\]
Hence there is a structure of a graded-commutative ring on 
\[
\MUD^*(*)(X):=\bigoplus_{n,p} \MUD^n(p)(X).
\]
\end{theorem}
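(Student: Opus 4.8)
The plan is to deduce the entire ring structure from the single structural fact that \( A:=\bigvee_{p\in\Z}\MUD(p) \) is a commutative \( \Sph \)-algebra whose underlying presheaf of symmetric spectra is the homotopy pullback \eqref{eq:1}. First I would justify that structure. All three remaining corners of \eqref{eq:1}, namely \( \bigvee_p MU \), \( \bigvee_p H(\Omega^{*\ge p}(\pi_{2*}MU\otimes\C)) \) and \( \bigvee_p H(\Omega^{*}(\pi_{2*}MU\otimes\C)) \), are commutative \( \Sph \)-algebras, and both \( \tau \) and the map induced by the inclusion of filtered forms are maps of commutative \( \Sph \)-algebras. Using the model structure on monoids in \( \SpsPre \) recalled earlier (the one whose weak equivalences and fibrations are created by the forgetful functor to \( \SpsPre \)), that forgetful functor is right Quillen and hence preserves homotopy pullbacks; thus the homotopy pullback formed in \( \Sph \)-algebras has underlying object exactly \eqref{eq:1}, and \( A \) acquires the required multiplication. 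In any case, for what follows only the resulting commutative-monoid structure of \( A \) in \( \hoSpsPre \) is needed.

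Next I would isolate the graded pieces \( \mu_{p,q}\colon\MUD(p)\wedge\MUD(q)\to\MUD(p+q) \) of the multiplication \( \mu\colon A\wedge A\to A \). The point is that the \( \Z \)-indexing of the wedge is the group-ring grading: on each corner of \eqref{eq:1} the product carries the summand labelled \( (p,q) \) into the summand labelled \( p+q \). For the two form corners this is the compatibility \( \Omega^{*\ge p}\cdot\Omega^{*\ge q}\subseteq\Omega^{*\ge p+q} \), and for \( \bigvee_p MU \) it is forced by the relation \( (2\pi i)^{p}(2\pi i)^{q}=(2\pi i)^{p+q} \), which is precisely why \( \tau=\bigvee_p\tau^{p} \) is a map of algebras for this grading. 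Consequently \( \mu \) decomposes into the maps \( \mu_{p,q} \).

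Then I would define the product on cohomology classes in the standard way. Writing a class \( \alpha\in\MUD^n(p)(X) \) as a homotopy class \( \Sigma^{\infty}_+(X)\to\Sigma^n\MUD(p) \) in \( \hoSpsPre \), and similarly \( \beta\in\MUD^m(q)(X) \), I set \( \alpha\beta \) to be the composite
\begin{multline*}
\Sigma^{\infty}_+(X)\xrightarrow{\Delta}\Sigma^{\infty}_+(X)\wedge\Sigma^{\infty}_+(X)\xrightarrow{\alpha\wedge\beta}\Sigma^n\MUD(p)\wedge\Sigma^m\MUD(q)\\
\xrightarrow{\cong}\Sigma^{n+m}\bigl(\MUD(p)\wedge\MUD(q)\bigr)\xrightarrow{\Sigma^{n+m}\mu_{p,q}}\Sigma^{n+m}\MUD(p+q),
\end{multline*}
where \( \Delta \) is induced by the diagonal of \( X \) together with the isomorphism \( \Sigma^{\infty}_+(X\times X)\cong\Sigma^{\infty}_+(X)\wedge\Sigma^{\infty}_+(X) \). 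This is well defined on homotopy classes because the stable local model structure on \( \SpsPre \) is monoidal, so the smash product is a left Quillen bifunctor descending to a derived smash on \( \hoSpsPre \), and \( \Sigma^{\infty}_+(X) \) is cofibrant.

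Finally I would check the ring axioms. Associativity and unitality of the product follow from the associativity and unit of \( \mu \) together with the coassociativity and counit of the diagonal on \( \Sigma^{\infty}_+(X) \); bilinearity follows from additivity of \( \Hom_{\hoSpsPre}(\Sigma^{\infty}_+(X),-) \) and distributivity of the derived smash over the wedge. Graded-commutativity comes from the commutativity of \( \mu \), i.e. \( \mu\simeq\mu\circ\mathrm{sw} \) for the twist \( \mathrm{sw} \) of \( A\wedge A \), combined with the cocommutativity of \( \Delta \): after these are applied, the only residual discrepancy between \( \alpha\beta \) and \( \beta\alpha \) is the symmetry isomorphism interchanging the suspension coordinates \( S^n \) and \( S^m \), which contributes the Koszul sign recorded in the statement. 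I expect the main obstacle to be exactly this last bookkeeping — tracking how the commutativity constraint on \( A \) absorbs the twist of the two \( \MUD \)-factors while the interchange of the sphere coordinates survives as a sign — rather than any of the formal steps; the one genuinely structural input, that the homotopy pullback \eqref{eq:1} refines to a commutative \( \Sph \)-algebra, is supplied by the monoid model structure invoked in the first step.
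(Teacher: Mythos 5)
Your proposal is correct and follows essentially the same route as the paper: the paper's entire argument is the construction of $\tau$ as a map of commutative $\Sph$-algebras (via the freeness of $MU_{\Q}$ over $H\Q$) and of the bottom map as a morphism of monoids, after which the homotopy cartesian square \eqref{eq:1} is formed in $\Sph$-algebras and the ring structure on $\MUD^*(*)(X)$ is read off exactly as you describe. Your additional bookkeeping (the graded pieces $\mu_{p,q}$, the diagonal, and the sign) only makes explicit what the paper leaves implicit.
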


\begin{prop}
For every complex manifold $X$ and every $n$ and $p$, the map of spectra $MU\to H\Z$ induces a map from Hodge filtered complex bordism to Deligne cohomology groups of $X$
\[
\MUD^n(p)(X) \to H_{\Dh}^n(X; \Z(p))
\]
which respects the product structures on both sides.
\end{prop}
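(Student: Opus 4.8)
The plan is to exhibit the map as arising from a morphism of commutative $\Sph$-algebras between the homotopy cartesian square \eqref{eq:1} defining $\bigvee_p\MUD(p)$ and the analogous square for $H\Z$, and then to identify the $H\Z$-target with Deligne cohomology. Throughout I use that the canonical orientation map $MU\to H\Z$ may be taken to be a morphism of commutative $\Sph$-algebras, and that on coefficients it is the augmentation $\pi_{2*}MU=\Z[x_2,x_4,\ldots]\to\Z$ followed, in degree zero, by the inclusion $\Z\hookrightarrow\C$.

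First I would run the construction \eqref{eq:1} verbatim with $H\Z$ in place of $MU$. Since $\pi_{2*}H\Z\otimes\C=\C$ is concentrated in degree zero, the map $\iota_{H\Z}\colon H\Z\to H(\pi_{2*}H\Z\otimes\C)=H\C$ is, up to a simply connected space of choices, the canonical one, and the resulting $\tau^p$ induces multiplication by $(2\pi i)^p$ on $\pi_0$. This produces a commutative $\Sph$-algebra $\bigvee_p (H\Z)_{\Dh}(p)$ whose defining square has corners $\bigvee_p H\Z$, $\bigvee_p H(\Omega^{*\geq p})$ and $\bigvee_p H(\Omega^*)$. By the spectrum-level version of Proposition \ref{D.6}, obtained by combining Proposition \ref{hypersymmgraded} with the identification of $K(\Z,n)(p)$ with the Deligne complex, there are natural isomorphisms $(H\Z)_{\Dh}^n(p)(X)\cong H_{\Dh}^n(X;\Z(p))$; moreover the product these groups carry through the commutative $\Sph$-algebra structure is determined by the cup products on $H\Z$ and the wedge products of forms on the two form-corners, and hence coincides with the Beilinson product on Deligne cohomology.

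Next I would assemble the comparison square. The orientation map and the coefficient augmentation supply the vertical maps, and the only substantive point is that the two families $\tau^{MU}$ and $\tau^{H\Z}$ fit into a square
\[
\xymatrix{
\bigvee_p MU \ar[r]^-{\tau} \ar[d] & \bigvee_p H(\Omega^*(\pi_{2*}MU\otimes\C)) \ar[d] \\
\bigvee_p H\Z \ar[r]^-{\tau} & \bigvee_p H(\Omega^*)
}
\]
of commutative $\Sph$-algebra maps that commutes up to coherent homotopy. Here I would invoke the same freeness used in the construction of \eqref{eq:1}: since $MU_\Q\approx H\Q\wedge MU$ is free commutative over $H\Q$ and the target is rational, each of the two composites $\bigvee_p MU\to\bigvee_p H(\Omega^*)$ is determined by its effect on homotopy groups, where on the $p$-th summand both send the unit to $(2\pi i)^p$ and annihilate the positive-degree polynomial generators; they therefore agree, and simple connectivity of the space of choices upgrades this to a coherent commuting square. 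The identical argument with $\Omega^{*\geq p}$ in place of $\Omega^*$ handles the left face, so one obtains a morphism of homotopy cartesian squares and thus a commutative $\Sph$-algebra map $\bigvee_p\MUD(p)\to\bigvee_p (H\Z)_{\Dh}(p)$.

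Applying $\Hom_{\hoSpsPre}(\Sigma^{\infty}_+(X),\Sigma^n(-))$ then yields the homomorphism $\MUD^n(p)(X)\to (H\Z)_{\Dh}^n(p)(X)\cong H_{\Dh}^n(X;\Z(p))$, and, being induced by a map of commutative $\Sph$-algebras, it intertwines the product of Theorem \ref{products} with the Deligne product. The main obstacle is precisely this middle step: arranging representatives of $\tau^{MU}$ and $\tau^{H\Z}$ into a diagram of commutative $\Sph$-algebra maps, rather than merely of underlying spectra, so that the induced map on homotopy pullbacks is multiplicative. This is exactly where the freeness of $MU_\Q$ over $H\Q$---and the resulting identification of commutative $\Sph$-algebra maps out of $MU$ with ring homomorphisms on homotopy groups---does the essential work.
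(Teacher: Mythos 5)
Your proposal is correct and follows essentially the same route as the paper, whose proof is a one-line appeal to Proposition \ref{D.6} (identifying the $H\Z$-version of the construction with Deligne cohomology) and Theorem \ref{products} (the commutative $\Sph$-algebra structure); you have simply filled in the details that the paper leaves implicit, in particular the use of the freeness of $MU_{\Q}$ over $H\Q$ to realize the comparison as a map of commutative $\Sph$-algebras, which is exactly the mechanism the paper already set up when constructing $\tau$ in \eqref{eq:1}.
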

\begin{proof}
This is a consequence of Theorem \ref{D.6} and Theorem \ref{products}.
\end{proof}

\begin{prop}\label{les1}
For every complex manifold $X$, Hodge filtered complex bordism groups sit in long exact sequence  
\[
\begin{array}{rl}
\ldots \to MU^{n-1}(X)\otimes \C & \to \MUD^n(p)(X) \to \\
\to MU^n(X)\oplus H^n(X; \Omega^{*\geq p}(\pi_{2*}MU\otimes \C)) & \to MU^n(X)\otimes \C \to \ldots.
\end{array}
\]
\end{prop}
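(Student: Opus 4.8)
The plan is to read off this sequence as the special case $E=MU$ of the long exact sequence established in Proposition~\ref{les}. That proposition is phrased for the presheaf $\ED(p)$ built from the homotopy cartesian square \eqref{4.12E}, whereas $\MUD(p)$ has just been defined as a wedge summand of the multiplicative square \eqref{eq:1}. So the first task is to identify these two objects, and the second is to rewrite the two ordinary cohomology terms occurring in Proposition~\ref{les} in the form $MU^{\ast}(X)\otimes\C$.

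First I would show that the $p$-th wedge summand of \eqref{eq:1} is precisely the square \eqref{4.12E} for $E=MU$. The three non-terminal corners of \eqref{eq:1} are the coproducts $\bigvee_{p}$ of the corresponding corners of \eqref{4.12E}, and the maps respect this decomposition. Since a homotopy pullback is a finite homotopy limit, and since coproducts of presheaves of symmetric spectra preserve homotopy fibre sequences while homotopy pullbacks are computed stalkwise, the coproduct commutes with the pullback; hence $\MUD(p)$ is the homotopy pullback of the single square with corners $MU$, $H(\Omega^{\ast\ge p}(\pi_{2\ast}MU\otimes\C))$ and $H(\Omega^{\ast}(\pi_{2\ast}MU\otimes\C))$ whose top map is $\tau^{p}$. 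Because $\pi_{\ast}\tau^{p}=(2\pi i)^{p}\pi_{\ast}\iota$, the map $\tau^{p}$ is an admissible choice of the datum $\tau^{E}$ required in the general construction, so this single square is an instance of \eqref{4.12E} with $E=MU$. Thus $\MUD(p)$ is $\ED(p)$ for $E=MU$, and Proposition~\ref{les} applies directly, producing a long exact sequence with successive terms $H^{n-1}(X;\pi_{2\ast}MU\otimes\C)$, then $\MUD^{n}(p)(X)$, then $MU^{n}(X)\oplus H^{n}(X;\Omega^{\ast\ge p}(\pi_{2\ast}MU\otimes\C))$, then $H^{n}(X;\pi_{2\ast}MU\otimes\C)$.

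It remains to identify $H^{n}(X;\pi_{2\ast}MU\otimes\C)$ with $MU^{n}(X)\otimes\C$, and likewise in degree $n-1$. For this I would invoke the splitting of $MU$ after rationalization: since $\pi_{\ast}MU$ is torsion free and concentrated in even degrees, the map $\iota$ exhibits an equivalence $MU\wedge H\C\simeq\bigvee_{j}\Sigma^{2j}H(\pi_{2j}MU\otimes\C)$, so that $(MU\wedge H\C)^{n}(X)\cong\bigoplus_{j}H^{n+2j}(X;\pi_{2j}MU\otimes\C)$, which is $H^{n}(X;\pi_{2\ast}MU\otimes\C)$ by the grading convention. The comparison map induced by $\iota$ then realizes the natural isomorphism $MU^{n}(X)\otimes\C\cong H^{n}(X;\pi_{2\ast}MU\otimes\C)$, and substituting it into the sequence from the previous paragraph yields the stated one. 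The step that genuinely needs care is this last identification: the equality $MU^{n}(X)\otimes\C=(MU\wedge H\C)^{n}(X)$ is immediate for $X$ of finite type, but for a general complex manifold one must interpret the left-hand side as $MU$-cohomology with complex coefficients, since cohomology does not commute with the tensor product for spaces whose cohomology fails to be finitely generated.
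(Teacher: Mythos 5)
Your proposal is correct and follows essentially the same route as the paper: the paper's proof likewise reads the sequence off from the definition of $\MUD(p)$ as a homotopy pullback (implicitly taking the $p$-th wedge summand of \eqref{eq:1}, which you spell out) together with the identification $MU^n(X)\otimes\C\cong H^n(X;\pi_{2*}MU\otimes\C)$. Your closing caveat about interpreting $MU^n(X)\otimes\C$ for manifolds of non-finite type is a reasonable point of care that the paper passes over silently, but it does not change the argument.
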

\begin{proof}
This follows again from the definition of $\MUD(p)$ as the homotopy pullback in diagram \eqref{eq:1} and the isomorphism 
\[
MU^n(X)\otimes \C \cong H^n(X;\pi_{2*}MU\otimes \C).
\]
\end{proof}
%

\begin{example}

The Hodge filtered cobordism groups of a point can be read off from the long exact sequence of Proposition \ref{les1}. We split sequence (\ref{les1}) into 
\[
0\to \mathrm{coker}(\alpha) \to \MUD^{n}(p)(\pt) \to \mathrm{ker}(\beta) \to 0
\]
where $\alpha$ is the map
\[
\alpha \colon MU^{n-1} \oplus H^{n-1}(\pt; \Omega^{*\geq p}(\pi_{2*}MU \otimes \C)) \to MU^{n-1}\otimes \C\]
and $\beta$ is the map
\[
\beta \colon MU^{n} \oplus H^n(\pt; \Omega^{*\geq p}(\pi_{2*}MU \otimes \C)) \to MU^{n}\otimes \C.
\]
First let $n=2k$ be even. In this case, the group $MU^{2k-1}\otimes \C$ vanishes. Hence the cokernel of $\alpha$ is trivial and $\MUD^{n}(p)(\pt)$ is isomorphic to $\mathrm{ker}(\beta)$. The groups $H^{n+2j}(\pt; \Omega^{*\geq p+j}(\pi_{2j}MU \otimes\C))$ vanish if $n+2j \neq 0$ or if $p+j >0$. Hence if $p>0$ or $k<p$, we see that $\MUD^{2k}(p)(\pt)$ is isomorphic to the kernel of the inclusion $MU^{2k}\to MU^{2k}\otimes \C$ and hence vanishes.

If $p\leq 0$, then $H^{2k}(\pt; \Omega^{*\geq p}(\pi_{2*}MU \otimes\C))$ is isomorphic to $MU^{2k}\otimes \C$ for $k \geq p$ and vanishes otherwise. Hence, for $k \geq p$, $\MUD^{2k}(p)(\pt)$ is isomorphic to the kernel of the map 
\[
MU^{2k} \oplus MU^{2k}\otimes \C \to MU^{2k}\otimes \C
\]
and hence 
\[
\MUD^{2k}(p)(\pt)\cong MU^{2k}
\]
for $k\geq p$. In particular, this implies that the group $\MUD^{2k}(p)(\pt)$ vanishes for $k>0$. 

We remark that, for all $p\in \Z$, we have
\[
\MUD^{2p}(p)(\pt)\cong MU^{2p}.
\]

For $n=2k+1$ odd, the group $\mathrm{ker}(\beta)$ is trivial, since the source of $\beta$ vanishes.  So $\MUD^{2k+1}(p)(\pt)$ is isomorphic to the cokernel of $\alpha$. For $p>0$, using the identifications above, we get for all $k\in \Z$ 
\[
\MUD^{2k+1}(p)(\pt) \cong (MU^{2k}\otimes \C)/MU^{2k}\cong MU^{2k}\otimes \C/\Z.
\]
In particular, this implies that the group $\MUD^{2k+1}(p)(\pt)$ vanishes for $k>0$. 

For $p \leq 0$ and $k < p$, we still get 
\[
\MUD^{2k+1}(p)(\pt) \cong MU^{2k} \otimes \C/\Z.
\]

For $p\leq 0$ and $k\geq p$, we have 
\[
\MUD^{2k+1}(p)(\pt) \cong (MU^{2k} \otimes \C)/(MU^{2k}\oplus MU^{2k}\otimes \C) = 0.
\]
\end{example}
%
%
%

\section{Hodge filtered cohomology for complex algebraic varieties}

%
We now turn to the case of algebraic varieties and, as described in the introduction, introduce a variation of our construction which takes into account mixed Hodge structures.
%

\subsection{From the analytic to the Nisnevich topology}

To transfer the theory of Hodge filtered cohomology to algebraic varieties, we have to discuss the relationship between the analytic site and the algebraic site. We will use the term {\em complex variety} for a reduced separated scheme of finite type over $\C$. Let $\Smcnis$ be the site of smooth complex algebraic varieties with the Nisnevich topology. 

We recall that a distinguished square in $\Smcnis$ is a cartesian square of the form 
\begin{equation}\label{dsquare}
\xymatrix{U\times_XV\ar[r] \ar[d] & V \ar[d]^p \\
U \ar[r]^j & X}
\end{equation}
such that $p$ is an \'etale morphism, $j$ is an open embedding and $p^{-1}(X-U)\to X-U$ is an isomorphism, where the closed subsets are equipped with the reduced induced structure. A square of the form \eqref{dsquare} is an example of a covering in the Nisnevich topology. Moreover, the Nisnevich topology is generated by coverings of this form (see \cite[\S 3.1]{mv}).

For $X\in \Smcnis$, we denote by $X_{\an}$ the associated complex manifold.  
This defines a functor
\[
f^{-1} \colon \mathbf{Sm}_{\C} \to \Manc, ~ X \mapsto f^{-1}(X):= X_{\an}.
\]
Composition with $f^{-1}$ induces a functor 
\[
f_* \colon \SpsPreman \to \SpsPrenis
\]
of the corresponding categories of presheaves of symmetric spectra. It is the right adjoint of a functor 
\[
f^* \colon \SpsPrenis \to \SpsPreman.
\]
The pair of functors $(f^*,f_*)$ forms a Quillen pair of adjoints between the model categories of monoids in $\SpsPrenis$ and $\SpsPreman$ respectively. 

We let $Rf_*$ be the right derived functor. It is given by the composition $f^{-1}\circ R_{\an}$ of $f^{-1}$ with a fibrant replacement functor $R_{\an}$ in the model structure of monoids in $\SpsPreman$. Since representable objects are cofibrant, the adjointness implies the following fact. 
\begin{prop}\label{comparesites}
Let $X$ be a smooth complex algebraic variety and let $\Eh$ be a presheaf of symmetric spectra on the site $\Manc$. Then there is a natural isomorphism 
$$\Hom_{\hoSpsPreman}(\Sigma^{\infty}_+(X_{\an}), \Sigma^n \Eh) \cong \Hom_{\hoSpsPrenis}(\Sigma^{\infty}_+(X), \Sigma^n Rf_*\Eh).$$
\end{prop}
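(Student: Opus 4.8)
The plan is to realize this isomorphism as an instance of the derived adjunction attached to the Quillen pair $(f^*, f_*)$. Recall that $f_*$ is precomposition with $f^{-1}$, so that its left adjoint $f^*$ is the associated left Kan extension; passing to the stable local projective model structures, which compute $\hoSpsPrenis$ and $\hoSpsPreman$ by Theorem \ref{stablecomparemodel}, the pair $(f^*, f_*)$ derives to an adjunction $(Lf^*, Rf_*)$ between the stable homotopy categories, characterized by $\Hom_{\hoSpsPreman}(Lf^* A, B) \cong \Hom_{\hoSpsPrenis}(A, Rf_* B)$.

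The first step is to compute $Lf^*$ on the representable spectrum. Writing $r_{\Sm}(X)$ and $r_{\Manc}(X_{\an})$ for the presheaves represented by $X$ and $X_{\an}$, the chain of natural bijections
\[
\Hom(f^* r_{\Sm}(X), G) \cong \Hom(r_{\Sm}(X), f_* G) \cong (f_* G)(X) = G(X_{\an}) \cong \Hom(r_{\Manc}(X_{\an}), G),
\]
valid for every presheaf $G$ on $\Manc$, identifies $f^* r_{\Sm}(X)$ with $r_{\Manc}(X_{\an})$. Moreover, evaluation at the zeroth space commutes strictly with $f_*$ on presheaves of symmetric spectra and on pointed simplicial presheaves; passing to the left adjoints $\Sigma^{\infty}$ and $f^*$, one concludes that $f^*$ commutes with $\Sigma^{\infty}_+$, so that $f^* \Sigma^{\infty}_+(X) = \Sigma^{\infty}_+(X_{\an})$. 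Since $X$ is representable, $\Sigma^{\infty}_+(X)$ is cofibrant in the stable local projective structure, whence $Lf^* \Sigma^{\infty}_+(X) = f^* \Sigma^{\infty}_+(X) = \Sigma^{\infty}_+(X_{\an})$.

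Next I would observe that $f_*$ commutes strictly with the shift $\Sigma^n$, both being defined levelwise, and that $\Sigma^n$ preserves stable equivalences and stably fibrant objects; hence $Rf_*(\Sigma^n \Eh) \cong \Sigma^n Rf_* \Eh$ naturally in $\hoSpsPrenis$. Feeding $A = \Sigma^{\infty}_+(X)$ and $B = \Sigma^n \Eh$ into the derived adjunction then yields
\[
\Hom_{\hoSpsPrenis}(\Sigma^{\infty}_+(X), \Sigma^n Rf_* \Eh) \cong \Hom_{\hoSpsPreman}(Lf^* \Sigma^{\infty}_+(X), \Sigma^n \Eh) = \Hom_{\hoSpsPreman}(\Sigma^{\infty}_+(X_{\an}), \Sigma^n \Eh),
\]
which is the asserted isomorphism; its naturality in $X$ is inherited from the naturality of the adjunction.

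The one point requiring genuine input, and hence the main obstacle, is that $(f^*, f_*)$ is indeed a Quillen pair for the stable local projective structures, i.e.\ that $f_*$ preserves stably fibrant objects. This hinges on the continuity of $f^{-1}$: it must send Nisnevich distinguished squares \eqref{dsquare} to homotopy cocartesian squares of complex manifolds, so that the descent and $\Omega$-spectrum conditions defining fibrancy are preserved under restriction along $X \mapsto X_{\an}$. Granting this compatibility of analytification with the two topologies, which is exactly what the discussion preceding the statement establishes, the proposition is a formal consequence of the representable computation above.
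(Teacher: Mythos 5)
Your proposal is correct and follows essentially the same route as the paper: the paper likewise observes that $(f^*,f_*)$ is a Quillen pair, that $Rf_*$ is computed by fibrant replacement followed by $f_*=(-)\circ f^{-1}$, and that the isomorphism then follows from the derived adjunction because representable objects (hence $\Sigma^\infty_+(X)$) are cofibrant. You have simply spelled out the details the paper leaves implicit — the Yoneda identification $f^*\Sigma^\infty_+(X)=\Sigma^\infty_+(X_{\an})$, the compatibility with $\Sigma^n$, and the continuity of $f^{-1}$ (distinguished squares analytify to covers), the last of which the paper only records later in the proof of Proposition \ref{nisdescent}.
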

%
%
%
\begin{example}\label{derivedmonoid}
In particular, if $E\in \sSp$ is a symmetric spectrum,  we have a natural isomorphism 
\[
E^n(X_{\an}) \cong \Hom_{\hoSpsPrenis}(\Sigma^{\infty}_+(X), \Sigma^n Rf_*E).
\]
%
%
\end{example}

\subsection{Differential forms with logarithmic poles}

For a smooth complex variety $X$, by Hironaka's theorem \cite{hironaka}, there exists a smooth complete variety $\oX$ over $\C$ with a closed embedding $j \colon X\into \oX$ such that $D:=\oX - X$ is a normal crossing divisor. 
Let $\Omega^1_{\oX}\langle D \rangle$ be the locally free sub-module of $j_*\Omega^1_{X}$ generated by $\Omega^1_{X}$ and by $\frac{dz_i}{z_i}$ where $z_i$ is a local equation for an irreducible local component of $D$. The sheaf $\Omega^p_{\oX}\langle D \rangle$ of meromorphic $p$-forms on $\oX$ with at most logarithmic poles along $D$ is defined to be the locally free sub-sheaf $\bigwedge^p\Omega^1_{\oX}\langle D \rangle$ of $j_*\Omega_{\oX}^p$. The Hodge filtration $F$ on the complex of sheaves $\Omega^*_{\oX}\langle D \rangle$ is defined to be the filtration 
\[
F^p\Omega^*_{\oX}\langle D \rangle = \Omega^{* \geq p}_{\oX}\langle D \rangle.
\]
The filtration on the complex of logarithmic forms allows one to define a Hodge filtration on the complex cohomology of $X$ as
\begin{equation}\label{Hodgefiltration}
F^pH^n(X;\C):= \Imm (H^n(\oX; \Omega^{* \geq p}_{\oX}\langle D \rangle) \to H^n(\oX; \Omega^{*}_{\oX}\langle D \rangle)\cong H^n(X;\C)).
\end{equation}
It is a theorem of Deligne's \cite[Th\'eor\`eme 3.2.5 and Corollaire 3.2.13]{hodge2} that for smooth complex algebraic varieties, the homomorphism 
\[
H^n(\oX; \Omega^{* \geq p}_{\oX}\langle D \rangle) \to H^n(\oX; \Omega^{*}_{\oX}\langle D \rangle)
\]
is injective and the image is independent of the choice of $\oX$. In particular, the map \eqref{Hodgefiltration} induces an isomorphism
\begin{equation}\label{delignehodge}
H^n(\oX; \Omega^{* \geq p}_{\oX}\langle D \rangle) \cong F^pH^n(X;\C) \subset H^n(X;\C).
\end{equation}

%
%

Following Beilinson~\cite[\S 1.6]{beilinson} we now show that the Hodge filtration
$F^{p}H^{n}(X,\C)$ is represented by a map of
symmetric spectra.  More precisely, we construct a sequence of  complexes of
presheaves
\[
\cdots \to A^{p}_{\log} \to A^{p+1}_{\log} \to \cdots
\]
on $\Smcnis$ equipped with a weak equivalence
\[
\colim_{p\to\infty} HA^{p}_{\log} \simeq Rf_{\ast}H\C,
\]
and having the property that for all $X\in\Smc$, the map 
\[
\Hom_{\hoSpsPre(\Smcnis)}(\Sigma^{\infty}_{+}X, \Sigma^{n}HA_{\log}^{p}) \to 
\Hom_{\hoSpsPre(\Smcnis)}(\Sigma^{\infty}_{+}X, \Sigma^{n}Rf_{\ast}H\C)
\]
induces an isomorphism
\[
\Hom_{\hoSpsPre(\Smcnis)}(\Sigma^{\infty}_{+}X, \Sigma^{n}HA_{\log}^{p})\to F^{p}H^{n}(X,\C).
\]

We consider the category $\Smb$ whose objects are {\em smooth
compactifications}, i.e. pairs $X\subset \oX$ consisting of a
smooth variety $X$ embedded as an open subset of a smooth complete variety
$\oX$ and having the property that $\oX-X$ is a
divisor with normal crossings.   A map from $X\subset \oX$ to
$Y\subset \overline{Y}$ is a commutative diagram 
\[
\xymatrix{
X  \ar[r]\ar[d]  &  \oX \ar[d] \\
Y  \ar[r]        &\oY.
}
\]


The complexes $\Omega^{\ast\ge p}_{\overline X}\langle D \rangle$ form a complex of presheaves $\Omega^{\ast\ge p}\langle D \rangle$ on $\Smb$.   Let 
\[
\Omega^{\ast\ge p}_{\oX}\langle D \rangle 
\to A^{p}_{\oX}\langle D \rangle 
\]
be any resolution by cohomologically trivial sheaves which is
functorial in $\overline{X}$, and let 
\[
\Omega^{\ast\ge p}\langle D \rangle 
\to A^{p}\langle D \rangle 
\]
be the associated map of complexes of presheaves.   For example, $A^{p}\langle D \rangle$
could be the Godemont resolution, or  the logarithmic Dolbeault
resolution (\cite[\S 8]{navarro}). 
The $A^{p}\langle D \rangle$ are double complexes, though we will only consider their total complexes.   They may be chosen in such a way as to fit into a commutative diagram 
\[
\xymatrix{
\Omega^{*\geq p-1}\langle D \rangle  \ar[r]\ar[d]  &  \Omega^{*\geq p}\langle D \rangle \ar[d] \\
A^{p-1}\langle D \rangle  \ar[r]        & A^{p}\langle D \rangle
}
\]

We define the presheaf $A^p_{\log}$ of complexes on $\Smc$ by
\[
A^p_{\log} \colon X \mapsto \colim_{C(X)} A^p\langle D \rangle (\oX)
\]
where the colimit is taken over the directed category $C(X)$ of all smooth compactifications $\oX$ of $X$. Let $HA^{p}_{\log} \in \SpsPrenis$ be the associated Eilenberg-MacLane spectrum. 

For $X\in \Smc$, let $\pi(\Sigma^{\infty}_{+}X, \Sigma^{n}HA^{p}_{\log})$ 
denote the set of homotopy classes of maps. The Dold-Kan correspondence implies that there is a natural bijection
\[
\pi(\Sigma^{\infty}_{+}X, \Sigma^{n}HA^{p}_{\log}) \cong H^n(A^p_{\log}(X)).
\]
As in \cite[Theorem 4]{browngersten} it follows from Deligne's isomorphism \eqref{delignehodge} and the fact that  $A^p\langle D \rangle$ is pseudo-flasque that we have a natural isomorphism
\[
H^n(A^p \langle D \rangle (\oX)) \cong F^pH^n(X;\C).
\]
Since the colimit defining $A^p_{\log}(X)$ is filtered, this implies that the natural map 
\[
H^n(A^p_{\log}(X)) \to F^pH^n(X;\C)
\]
is an isomorphism too. Hence we obtain a natural isomorphism
\[
\pi(\Sigma^{\infty}_{+}X, \Sigma^{n}HA^{p}_{\log}) \cong F^pH^n(X;\C).
\]
The functor $X\mapsto H^*(X;\C)$ satisfies descent for the Nisnevich topology in the sense that every distinguished square \eqref{dsquare} induces a long exact Mayer-Vietoris sequence. By \cite[Th\'eor\`eme 1.2.10 and Corollaire 3.2.13]{hodge2}, the Hodge filtration respects this sequence and yields a long exact sequence 
\[
\ldots \to F^pH^{q-1}(X;\C) \to F^pH^q(U\times_X V;\C) \to F^pH^q(U;\C) \oplus F^pH^q(V;\C) \to F^pH^q(X;\C) \to \ldots
\] 
This allows us to deduce from the Verdier hypercovering theorem that the natural map 
\[
\pi(\Sigma^{\infty}_{+}X, \Sigma^{n}HA^{p}_{\log}) \to 
\Hom_{\hoSpsPrenis}(\Sigma^{\infty}_{+}X, \Sigma^{n}HA^{p}_{\log})
\]
is a bijection (see \cite[Theorem 3.5]{compact} for more details). This implies the following result. 

\begin{prop}
\label{compind}
For every $X$, and every $n\ge 0$, there is a natural isomorphism 
\[ 
\Hom_{\hoSpsPrenis}(\Sigma^{\infty}_{+}X, \Sigma^{n}HA_{\log}^{p}) \cong F^{p}H^{n}(X;\C).
\] 
\end{prop}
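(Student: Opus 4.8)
The plan is to assemble the isomorphism in three stages: passing from naive simplicial homotopy classes of maps, through the cohomology of the complex $A^p_{\log}(X)$, to the Hodge filtration, and finally upgrading naive homotopy classes to genuine maps in the stable homotopy category $\hoSpsPrenis$. Most of the individual ingredients have already been established in the discussion preceding the statement, so the work is primarily in organizing them and isolating the one nontrivial input.

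First I would record the identification supplied by the Dold-Kan correspondence: for the Eilenberg-MacLane spectrum $HA^p_{\log}$, the set $\pi(\Sigma^{\infty}_{+}X, \Sigma^{n}HA^{p}_{\log})$ of simplicial homotopy classes is naturally in bijection with $H^n(A^p_{\log}(X))$. The next step is to compute this cohomology group. By construction $A^p_{\log}(X)$ is the filtered colimit over the directed category $C(X)$ of smooth compactifications of the complexes $A^p\langle D\rangle(\oX)$, each of which is a cohomologically trivial resolution of $\Omega^{*\geq p}_{\oX}\langle D\rangle$. Following \cite[Theorem 4]{browngersten}, using the pseudo-flasqueness of $A^p\langle D\rangle$ together with Deligne's isomorphism \eqref{delignehodge}, one obtains a natural isomorphism $H^n(A^p\langle D\rangle(\oX)) \cong F^pH^n(X;\C)$ that is independent of the chosen compactification. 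Since cohomology commutes with filtered colimits, this yields a natural isomorphism $H^n(A^p_{\log}(X)) \cong F^pH^n(X;\C)$, and hence $\pi(\Sigma^{\infty}_{+}X, \Sigma^{n}HA^{p}_{\log}) \cong F^pH^n(X;\C)$.

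The main obstacle, and the heart of the argument, is the passage from naive homotopy classes to maps in the homotopy category, that is, showing that the canonical comparison map
\[
\pi(\Sigma^{\infty}_{+}X, \Sigma^{n}HA^{p}_{\log}) \to \Hom_{\hoSpsPrenis}(\Sigma^{\infty}_{+}X, \Sigma^{n}HA^{p}_{\log})
\]
is a bijection. This is a Verdier-hypercovering statement, the spectrum-level Nisnevich analogue of Proposition \ref{verdierpure}, and it requires that the presheaf $HA^p_{\log}$ satisfy descent for the Nisnevich topology. The decisive input is that the functor $X\mapsto H^*(X;\C)$ satisfies Nisnevich descent, so that every distinguished square \eqref{dsquare} produces a Mayer-Vietoris long exact sequence, and that the Hodge filtration $F^p$ is compatible with this sequence. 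The latter compatibility is precisely Deligne's \cite[Th\'eor\`eme 1.2.10 et Corollaire 3.2.13]{hodge2}, which furnishes the long exact sequence
\[
\ldots \to F^pH^{q-1}(X;\C) \to F^pH^q(U\times_X V;\C) \to F^pH^q(U;\C)\oplus F^pH^q(V;\C) \to F^pH^q(X;\C) \to \ldots
\]

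Granting this descent property, the Verdier hypercovering theorem in the form of \cite[Theorem 3.5]{compact} upgrades the set of naive homotopy classes to the set of maps in $\hoSpsPrenis$, giving a bijection. Composing this with the chain of identifications established above then produces the asserted natural isomorphism $\Hom_{\hoSpsPrenis}(\Sigma^{\infty}_{+}X, \Sigma^{n}HA_{\log}^{p}) \cong F^{p}H^{n}(X;\C)$. Naturality in $X$ is inherited at each stage, since the Dold-Kan bijection, Deligne's isomorphism, the filtered colimit, and the hypercovering comparison are all natural.
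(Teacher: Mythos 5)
Your proposal is correct and follows essentially the same route as the paper: the Dold--Kan identification of naive homotopy classes with $H^n(A^p_{\log}(X))$, the Brown--Gersten/pseudo-flasque argument combined with Deligne's isomorphism and the filtered colimit over compactifications, and finally the Verdier hypercovering comparison justified by Nisnevich descent of $F^pH^*(-;\C)$ via Deligne's Mayer--Vietoris compatibility. The paper's argument is exactly this chain, presented in the paragraphs immediately preceding the proposition.
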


%
%
%
%
\subsection{Generalized Deligne-Beilinson cohomology for smooth complex algebraic varieties}
Let $A^{\ast} = \bigoplus A^{p,q}$ be the presheaf given by the functorial resolution of $\Omega^*$ compatible with the resolution of $\Omega^{\ge p}\langle D \rangle$ chosen in the previous subsection. For an evenly graded $\C$-algebra $\Vh_{2*}$, let $HA^*(\Vh_{2*})$ denote the presheaf of commutative $\Sph$-algebras 
\[
HA^*(\Vh_{2*}) = \bigvee_j \Sigma^{2j}HA^{*}(\Vh_{2j}) = \bigvee_j \Sigma^{2j} HA^{*} \otimes \Vh_{2j}.
\]
Moreover, let $HA^{p+*}_{\log}(\Vh_{2*})$ be the presheaf of commutative $\Sph$-algebras  
\[
HA^{p+*}_{\log}(\Vh_{2*}) = \bigvee_j \Sigma^{2j}HA_{\log}^{p+j}(\Vh_{2j}),
\]
where we denote
\[
HA_{\log}^{p+j} (\Vh_{2j}) = HA^{p+j}_{\log} \otimes \Vh_{2j}.
\]
Any choice of resolution $A^p\langle D \rangle$ of the complexes of logarithmic forms is equipped with a natural morphism of presheaves of symmetric spectra
\begin{equation}
\label{HAlogmap}
HA^{p+*}_{\log}(\Vh_{2*}) \to Rf_*HA^*(\Vh_{2*}).
\end{equation}

Let $E$ be a rationally even spectrum in $\sSp$. Let  
\[
\tau^E \colon E \to H(\pi_{2*}E\otimes \C)
\]
be the map \eqref{tauE}. 
It induces a map 
\[
Rf_*E \to Rf_*H(\pi_{2*}E\otimes \C)
\]
in $\SpsPrenis$. Composition with the maps
\[
Rf_*H(\pi_{2*}E\otimes \C) \to Rf_*H(A^*(\pi_{2*}E\otimes \C))
\]
defines a map in $\SpsPrenis$  
\[
Rf_*E \to Rf_*H(A^*(\pi_{2*}E\otimes \C)).
\]
We define the presheaf of symmetric spectra $E_{\log}(p)$ by the homotopy cartesian square   
\begin{equation}\label{diagramEDB}
\xymatrix{
E_{\log}(p) \ar[d] \ar[r] & Rf_*E \ar[d] \\
H(A^{p+*}_{\log}(\pi_{2*}E\otimes \C)) \ar[r] & Rf_*H(A^*(\pi_{2*}E\otimes \C)).}
\end{equation}

\begin{defn}\label{EDBrefined}
For a presheaf of symmetric spectra $\Xh$ on $\Smcnis$, we define the logarithmic Hodge filtered $E$-cohomology groups to be 
\[
\EDB(p)^n(\Xh) := \Hom_{\hoSpsPre}(\Xh, \Sigma^n E_{\log}(p)).
\]
\end{defn}

For a smooth complex algebraic variety $X$, we denote the $E$-cohomology group $E^n(X_{\an})$ of the associated complex manifold $X_{\an}$ by $E^n(X)$. Moreover, we use the notation
\[
F^{p+*}H^*(X; \pi_{2*}E\otimes \C)^{n} = \bigoplus_{j} F^{p+j}H^{n+2j}(X; \pi_{2j}E\otimes \C)
\]
for the sum of Hodge filtered cohomology groups.

\begin{prop}\label{Elesalg}
Hodge filtered cohomology groups sit in long exact sequences  
\[
\begin{array}{rl}
\ldots \to H^*(X; \pi_{2*}E\otimes \C)^{n-1} & \to \Elog^n(p)(X) \to \\
\to E^n(X)\oplus F^{p+*}H^*(X; \pi_{2*}E\otimes \C)^n & \to H^*(X; \pi_{2*}E\otimes \C)^n \to \ldots
\end{array}
\]
and
\[
\begin{array}{rl}
\ldots \to E^{n-1}(X)\otimes \C & \to \Elog^n(p)(X) \to \\
\to E^n(X)\oplus F^{p+*}H^*(X; \pi_{2*}E\otimes \C)^n & \to E^n(X)\otimes \C \to \ldots.
\end{array}
\]
\end{prop}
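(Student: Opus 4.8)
The plan is to feed the homotopy cartesian square \eqref{diagramEDB} into the general long exact sequence associated to a homotopy pullback, exactly as in the proof of Proposition \ref{les}. Writing $\Uh = Rf_*E$, $\Vh = H(A^{p+*}_{\log}(\pi_{2*}E\otimes\C))$ and $\Wh = Rf_*H(A^*(\pi_{2*}E\otimes\C))$, so that $E_{\log}(p)$ is the homotopy pullback of $\Uh \to \Wh \leftarrow \Vh$, I would apply $\Hom_{\hoSpsPrenis}(\Sigma^{\infty}_+(X), \Sigma^{\bullet}(-))$ and use the recalled fact to obtain a long exact sequence
\[
\ldots \to [\Sigma^{\infty}_+(X), \Sigma^{n-1}\Wh] \to \Elog^n(p)(X) \to [\Sigma^{\infty}_+(X), \Sigma^n\Uh]\oplus[\Sigma^{\infty}_+(X),\Sigma^n\Vh] \to [\Sigma^{\infty}_+(X),\Sigma^n\Wh] \to \ldots
\]
It then remains only to identify the three recurring groups.

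First I would identify $[\Sigma^{\infty}_+(X),\Sigma^n Rf_*E]$ with $E^n(X)$ by Example \ref{derivedmonoid}. Next, using the decomposition $H(A^{p+*}_{\log}(\pi_{2*}E\otimes\C)) = \bigvee_j \Sigma^{2j}HA^{p+j}_{\log}\otimes(\pi_{2j}E\otimes\C)$ together with Proposition \ref{compind} applied summand by summand, and keeping track of the grading conventions, the filtered corner becomes
\[
[\Sigma^{\infty}_+(X), \Sigma^n H(A^{p+*}_{\log}(\pi_{2*}E\otimes\C))] \cong \bigoplus_j F^{p+j}H^{n+2j}(X;\pi_{2j}E\otimes\C) = F^{p+*}H^*(X;\pi_{2*}E\otimes\C)^n.
\]
Finally, since $A^*$ is a resolution of $\Omega^*$, the spectrum $Rf_*H(A^*(\pi_{2*}E\otimes\C))$ computes ordinary complex cohomology with $\pi_{2*}E\otimes\C$-coefficients; combining Proposition \ref{comparesites} with the hypercohomology computation of Proposition \ref{hypersymmgraded} gives
\[
[\Sigma^{\infty}_+(X), \Sigma^n Rf_*H(A^*(\pi_{2*}E\otimes\C))] \cong H^*(X;\pi_{2*}E\otimes\C)^n.
\]
Substituting these three identifications into the sequence yields the first long exact sequence verbatim.

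For the second sequence I would simply replace the bottom-right corner using the de Rham comparison isomorphism $E^n(X)\otimes\C \cong H^*(X;\pi_{2*}E\otimes\C)^n$, exactly as in the proof of Proposition \ref{les1}; the $(2\pi i)$-twists built into $\tau^E$ are units in $\C$ and hence do not affect exactness, so the connecting and structure maps transport correctly under this isomorphism. I expect the main obstacle to be purely bookkeeping: matching the internal gradings of the wedge summands $\Sigma^{2j}HA^{p+j}_{\log}\otimes(\pi_{2j}E\otimes\C)$ against the shifted filtration indices $F^{p+j}H^{n+2j}$ so that Proposition \ref{compind} applies termwise, and verifying that the $Rf_*$ occurring in two of the corners is correctly absorbed by Proposition \ref{comparesites} when passing from the analytic cohomology of $X_{\an}$ to morphisms in $\hoSpsPrenis$. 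Once these identifications are in place, the exactness is formal.
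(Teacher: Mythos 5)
Your proposal is correct and follows essentially the same route as the paper: the long exact sequence of the homotopy pullback \eqref{diagramEDB}, with the three corners identified via Proposition \ref{comparesites} (for $Rf_*E$ and the de Rham corner) and Proposition \ref{compind} (for the logarithmic filtered corner), and the second sequence obtained by substituting the isomorphism $E^n(X)\otimes\C\cong H^*(X;\pi_{2*}E\otimes\C)^n$ as in Proposition \ref{les1}. The paper's own proof is a condensed version of exactly this argument.
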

\begin{proof}
As before, this is a consequence of the definition of $\Elog(p)$ as a homotopy pullback. The new ingredients are the identifications of $E$-cohomology groups in Proposition \ref{comparesites} and of the Hodge filtered cohomology groups in \eqref{compind}. 
\end{proof}


\begin{remark}
If $X$ is a smooth projective complex variety, the new Hodge filtered cohomology groups $\EDB^n(p)(X)$ of Definition \ref{EDBrefined} are canonically isomorphic to the groups $\ED^n(p)(X_{\an})$ of Definition \ref{def4.34}. 
\end{remark}


\subsection{$\A^1$-homotopy invariance}

\begin{theorem}\label{extendeda1}
Let $E$ be a rationally even ring spectrum and let $X$ be a smooth variety over $\C$. The projection $\pi \colon X\times \A^1 \to X$ induces an isomorphism
\[
\pi^* \colon \Elog^n(p)(X) \xrightarrow{\cong} \Elog^n(p)(X\times \A^1).
\]
%
\end{theorem}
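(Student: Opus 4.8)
The plan is to compare the long exact sequences of Proposition \ref{Elesalg} for $X$ and for $X\times\A^1$ and to conclude with the five lemma. Since $\pi$ is a morphism in $\Smcnis$ and $\Elog(p)$ is defined by the homotopy cartesian square \eqref{diagramEDB} in a way that is functorial in the argument, the map $\pi^{*}$ is natural and hence commutes with the connecting homomorphisms. It therefore induces a morphism between the two long exact sequences, and it suffices to prove that $\pi^{*}$ is an isomorphism on each of the four terms adjacent to $\Elog^{n}(p)(X)$ in the sequence. Using the second form of the sequence in Proposition \ref{Elesalg}, these neighbouring terms are built out of the topological $E$-cohomology groups $E^{m}(X)$, their complexifications $E^{m}(X)\otimes\C$, and the Hodge filtered groups $F^{p+*}H^{*}(X;\pi_{2*}E\otimes\C)^{m}$ for $m=n-1,n$. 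So it is enough to establish $\A^{1}$-invariance of each of these three types of group separately.

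For the topological $E$-cohomology this is immediate. By definition $E^{n}(X)=E^{n}(X_{\an})$, and the analytification of $\pi$ is the projection $X_{\an}\times\C\to X_{\an}$. Since $\C=\A^{1}_{\an}$ is contractible, this projection is a homotopy equivalence of topological spaces, so it induces an isomorphism on $E$-cohomology; tensoring with $\C$ gives the invariance of $E^{n}(X)\otimes\C$ as well.

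It remains to treat the Hodge filtered terms, and this is the only genuinely nontrivial input. By Deligne's theory \cite{hodge2}, the morphism $\pi\colon X\times\A^{1}\to X$ of smooth complex varieties induces a morphism of mixed Hodge structures $\pi^{*}\colon H^{n}(X;\C)\to H^{n}(X\times\A^{1};\C)$ for every $n$. By the topological computation of the previous paragraph (contractibility of $\C$), this map is an isomorphism on the underlying $\C$-vector spaces. Since morphisms of mixed Hodge structures are strict with respect to the Hodge filtration, $\pi^{*}$ is then automatically a filtered isomorphism, and in particular it restricts to an isomorphism $F^{p}H^{n}(X;\C)\xrightarrow{\cong}F^{p}H^{n}(X\times\A^{1};\C)$. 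Tensoring with $\pi_{2j}E$ and summing over $j$ shows that $\pi^{*}$ is an isomorphism on $F^{p+*}H^{*}(X;\pi_{2*}E\otimes\C)^{n}$. Equivalently, the same conclusion can be read off from the objects $HA^{p}_{\log}$ representing $F^{p}H^{n}(-;\C)$ in Proposition \ref{compind}, using the compactification $\A^{1}\subset\Pro^{1}$ with normal crossing divisor $\{\infty\}$ and the resulting Künneth decomposition of the logarithmic complex.

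With all four neighbouring terms known to be $\A^{1}$-invariant, the five lemma applied to the induced morphism of long exact sequences yields that $\pi^{*}\colon\Elog^{n}(p)(X)\to\Elog^{n}(p)(X\times\A^{1})$ is an isomorphism. The crux of the argument is the $\A^{1}$-invariance of the Hodge filtration: this is precisely what the passage to forms with logarithmic poles is designed to guarantee, whereas the naive analytic Hodge filtration on an arbitrary complex manifold used in the definition of $\ED(p)$ would not be homotopy invariant.
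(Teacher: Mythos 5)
Your proposal is correct and follows essentially the same route as the paper: compare the long exact sequences of Proposition \ref{Elesalg} for $X$ and $X\times\A^1$, note that the topological terms are invariant because $\A^1_{\an}=\C$ is contractible, and reduce everything to the $\A^1$-invariance of $F^{p}H^{*}(-;\C)$, which the paper also identifies as the only nontrivial point and attributes to Deligne \cite{hodge2}. Your explicit justification via strictness of morphisms of mixed Hodge structures is exactly the intended content of that citation.
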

\begin{proof}
This can be deduced from the long exact sequence and the $\A^1$-invariance of the individual terms.  
The only non-trivial fact is the $\A^1$-invariance of the $F^{p}H^*(-; \C)$ which follows from \cite[Th\'eor\`eme 1.2.10]{hodge2}. 
\end{proof}

\begin{prop}\label{nisdescent}
Let $E$ be a rationally even ring spectrum. The functor 
\[ 
X\mapsto \Elog^*(*)(X)
\] 
on $\Smcnis$ satisfies Nisnevich descent in the sense that a distinguished square in the Nisnevich topology induces a long exact Mayer-Vietoris sequence. 
\end{prop}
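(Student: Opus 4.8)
The plan is to prove the statement at the level of the representing presheaf of symmetric spectra $E_{\log}(p)$, rather than one cohomology group at a time. Recall that for a cofibrant object such as $\Sigma^{\infty}_+(X)$, the functor $\Hom_{\hoSpsPrenis}(\Sigma^{\infty}_+(X), \Sigma^n(-))$ is computed by the $(-n)$-th stable homotopy group of the spectrum of sections on $X$ of a stably fibrant model. Consequently, asserting that $X\mapsto \Elog^*(*)(X)$ sends a distinguished square \eqref{dsquare} to a long exact Mayer--Vietoris sequence is equivalent to asserting that $E_{\log}(p)$ satisfies \emph{Nisnevich descent}, i.e. that applying the presheaf of spectra $E_{\log}(p)$ to \eqref{dsquare} yields a homotopy cartesian square of spectra; the Mayer--Vietoris sequence is then the associated long exact sequence of stable homotopy groups of that homotopy pullback square. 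First I would make this reduction precise.

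The second step is to reduce descent for $E_{\log}(p)$ to descent for the three corners of the homotopy cartesian square \eqref{diagramEDB}. The formal input is that homotopy cartesian squares of spectra are closed under homotopy limits, and in particular under the homotopy pullback defining $E_{\log}(p)$. Thus it suffices to show that each of $Rf_*E$, $H(A^{p+*}_{\log}(\pi_{2*}E\otimes\C))$, and $Rf_*H(A^*(\pi_{2*}E\otimes\C))$ satisfies Nisnevich descent separately.

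For the logarithmic corner, descent is exactly what was established in the previous subsection: by Proposition \ref{compind} the presheaf $H(A^{p+*}_{\log}(\pi_{2*}E\otimes\C))$ represents $F^{p+*}H^*(-;\pi_{2*}E\otimes\C)$, and Deligne's theorems produce the Nisnevich Mayer--Vietoris sequence for the Hodge filtration along a distinguished square. For the two analytic pushforward corners, which by Proposition \ref{comparesites} and Example \ref{derivedmonoid} compute $E^*(X_{\an})$ and $H^*(X_{\an};\pi_{2*}E\otimes\C)$ respectively, I would invoke analytic excision: the analytic realization of a distinguished square is a homotopy cocartesian square of complex manifolds, because $V_{\an}\to X_{\an}$ is a local biholomorphism restricting to an isomorphism over $X_{\an}-U_{\an}$ while $U_{\an}\hookrightarrow X_{\an}$ is open, so that $X_{\an}$ is the homotopy pushout of $U_{\an}\leftarrow (U\times_X V)_{\an}\to V_{\an}$. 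Any cohomology theory sends such a homotopy pushout of spaces to a Mayer--Vietoris long exact sequence, which is precisely Nisnevich descent for these corners. Combining the three cases with the homotopy-limit stability of the second step yields the result.

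The step I expect to be the main obstacle is the analytic excision input, namely verifying that the analytic realization of a distinguished square is genuinely a homotopy pushout of complex manifolds; this is the geometric heart of the argument and the place where the Nisnevich topology, rather than merely the analytic open-cover topology, enters. An alternative, more computational route would bypass $E_{\log}(p)$ and instead compare the Mayer--Vietoris sequences of the individual terms in the long exact sequence of Proposition \ref{Elesalg} via the five lemma, using that $E^*(-)$, $F^{p+*}H^*(-;\pi_{2*}E\otimes\C)$, and $H^*(-;\pi_{2*}E\otimes\C)$ each satisfy Nisnevich descent; but the homotopy-pullback argument is cleaner and more robust.
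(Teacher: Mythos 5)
Your proposal is correct and follows essentially the same route as the paper: both reduce Nisnevich descent for $\Elog(p)$ to descent for the three corners of the defining homotopy cartesian square \eqref{diagramEDB}, using Deligne's results for the logarithmic corner and the fact that a distinguished square realizes to a (homotopy) covering square of complex manifolds for the two analytic corners --- the point you rightly flag as the geometric heart, which the paper likewise takes as given (it is the content of \cite{di}). The only difference is cosmetic: you assemble the three cases by stability of homotopy cartesian squares under homotopy limits at the level of representing spectra, whereas the paper assembles them via the long exact sequence of Proposition \ref{Elesalg}, i.e.\ the five-lemma alternative you mention at the end.
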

\begin{proof}
We have already remarked for the proof of Proposition \ref{compind} that a distinguished square \eqref{dsquare} induces a long exact Mayer-Vietoris sequence for the Hodge filtered cohomology groups $F^p(X;\C)$. 
Since the functor $f^{-1}$ sends distinguished squares to covering squares in the analytic topology, a square of the form \eqref{dsquare} induces a long exact Mayer-Vietoris sequence for $E$-cohomology groups and de Rham cohomology groups too. The exact sequence of Proposition \ref{Elesalg} now shows that $\Elog$ has the desired property as well.
\end{proof}

As a consequence of Theorem \ref{extendeda1} and Proposition \ref{nisdescent}, we get the following result.
\begin{cor}\label{a1functor}
Let $E$ be a rationally even ring spectrum. The functor $\Elog^*(*)(-)$ induces a functor on the $S^1$-stable $\A^1$-homotopy category of smooth varieties over $\C$.
\end{cor}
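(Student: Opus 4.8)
The plan is to realize the $S^{1}$-stable $\A^{1}$-homotopy category $\mathcal{SH}^{S^1}(\C)$ as the left Bousfield localization of the Nisnevich-local stable homotopy category $\hoSpsPrenis$ at the class of $\A^{1}$-weak equivalences, that is, the smallest class of maps containing the projections $\Sigma^{\infty}_{+}(X\times\A^{1})\to\Sigma^{\infty}_{+}(X)$ for $X\in\Smc$ and closed under cofiber sequences, suspensions and coproducts. Writing $L_{\A^{1}}$ for the associated localization functor, it suffices to prove that for each pair of integers $(n,p)$ the representing object $\Sigma^{n}\Elog(p)$ is $\A^{1}$-local in $\hoSpsPrenis$. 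Indeed, $\A^{1}$-locality yields for every presheaf of symmetric spectra $\Xh$ a natural bijection
\[
\Elog^{n}(p)(\Xh)=\Hom_{\hoSpsPrenis}(\Xh,\Sigma^{n}\Elog(p))\cong\Hom_{\mathcal{SH}^{S^1}(\C)}(L_{\A^{1}}\Xh,\Sigma^{n}\Elog(p)),
\]
so that $\Elog^{*}(*)(-)$ factors through $L_{\A^{1}}$; specializing $\Xh=\Sigma^{\infty}_{+}(X)$ recovers the groups $\Elog^{n}(p)(X)$ for a smooth variety $X$.

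First I would invoke Proposition \ref{nisdescent} to record that $\Elog^{*}(*)(-)$ satisfies Nisnevich descent, so that after fibrant replacement $\Elog(p)$ is a genuinely Nisnevich-local object and the groups $\Elog^{n}(p)(X)$ are its derived, descent-satisfying cohomology groups. This descent is what permits the usual characterization of $\A^{1}$-local objects to be applied: a Nisnevich-local spectrum $Z$ is $\A^{1}$-local precisely when, for every $X\in\Smc$ and every integer $m$, the projection $X\times\A^{1}\to X$ induces an isomorphism
\[
\Hom_{\hoSpsPrenis}(\Sigma^{\infty}_{+}(X),\Sigma^{m}Z)\xrightarrow{\cong}\Hom_{\hoSpsPrenis}(\Sigma^{\infty}_{+}(X\times\A^{1}),\Sigma^{m}Z).
\]

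Next I would verify this criterion for $Z=\Sigma^{n}\Elog(p)$. In that case the displayed map is exactly $\pi^{*}\colon\Elog^{m+n}(p)(X)\to\Elog^{m+n}(p)(X\times\A^{1})$, which is an isomorphism for every value of $m+n$ by Theorem \ref{extendeda1}. Hence each $\Sigma^{n}\Elog(p)$ is $\A^{1}$-local, and assembling the resulting factorizations over all bidegrees $(n,p)$ produces the asserted functor $\Elog^{*}(*)(-)$ on $\mathcal{SH}^{S^1}(\C)$. The long exact sequences of Proposition \ref{Elesalg} ensure the multiplicative and additive structure is respected.

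The main obstacle is the reduction of genuine $\A^{1}$-locality—orthogonality against \emph{all} $\A^{1}$-weak equivalences—to the $\A^{1}$-invariance of Theorem \ref{extendeda1}, which a priori concerns only suspension spectra of smooth varieties. The key point is that the objects $\Sigma^{\infty}_{+}(X)$ with $X\in\Smc$ form a set of compact generators of the triangulated category $\hoSpsPrenis$. Consequently the full subcategory of those $W$ for which $\Hom_{\hoSpsPrenis}(-,\Sigma^{n}\Elog(p))$ carries the generating projections $\Sigma^{\infty}_{+}(W\times\A^{1})\to\Sigma^{\infty}_{+}(W)$ to isomorphisms is localizing and contains the generators, hence is everything; equivalently, orthogonality need only be tested on the generating maps. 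Once this standard generation argument is in place, Theorem \ref{extendeda1} supplies exactly the required invariance on generators and Proposition \ref{nisdescent} the required descent, completing the proof.
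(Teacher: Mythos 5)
Your proposal is correct and follows the same route the paper intends: the paper derives the corollary directly from Theorem \ref{extendeda1} and Proposition \ref{nisdescent} without further argument, and your write-up simply makes explicit the standard mechanism (Nisnevich descent plus $\A^1$-invariance on the compact generators $\Sigma^{\infty}_{+}(X)$ implies that each $\Sigma^{n}\Elog(p)$ is $\A^1$-local, so the represented functor factors through the localization). No gaps; the only superfluous remark is the final sentence about multiplicative structure, which the corollary does not claim.
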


%
\section{Generalized Deligne-Beilinson cobordism for complex algebraic varieties}
In this last section, we apply the refined methods for algebraic complex varieties to the case $E=MU$. The advantage of the logarithmic theory $\MUDB$ is that it has several topological properties that would hold for $\MUD$ only for compact manifolds. In particular, we will show that $\MUDB$ satisfies a projective bundle formula and is equipped with transfer maps for projective morphisms. 

Recall our choice of a map 
$$\bigvee_{p\in \Z} MU \to \bigvee_{p\in \Z} H(\pi_{2*}MU\otimes \C)$$
of commutative $\Sph$-algebras. We can consider this map as a map of presheaves of commutative $\Sph$-algebras on $\Smcnis$. 
Composition with the map  
$$H(\pi_{2*}MU\otimes \C) \to H(A^*(\pi_{2*}MU\otimes \C))$$
and application of $Rf_*$ defines a map of presheaves of commutative $\Sph$-algebras 
$$\bigvee_{p\in \Z} Rf_*MU \to \bigvee_{p\in \Z} Rf_*H(A^*(\pi_{2*}MU\otimes \C)).$$
Moreover, since $\pi_{2*}MU\otimes \C$ is a $\C$-algebra, we obtain a natural analogue of map \eqref{HAlogmap} as a morphism of presheaves of commutative $\Sph$-algebras
\[
\bigvee_{p\in\Z} H(A^{p+*}_{\log}(\pi_{2*}MU\otimes \C))  \to 
\bigvee_{p\in\Z} Rf_*H(A^*(\pi_{2*}MU\otimes \C)).
\]
We define the presheaf of commutative $\Sph$-algebras in $\SpsPrenis$
\begin{equation}
\label{MUlog}
\bigvee_{p\in \Z}\MUlog(p)
\end{equation}
by the homotopy cartesian square 
\begin{equation}
\label{diagramMUDB}
\xymatrix{
*++!{\displaystyle\bigvee_{p\in\Z}\MUlog(p)}  \ar[r]\ar[d]  &  *++!{\displaystyle\bigvee_{p\in\Z} Rf_*MU} \ar[d]\\
*++!{\displaystyle\bigvee_{p\in\Z} H(A^{p+*}_{\log}(\pi_{2*}MU\otimes \C))}  \ar[r]   &
*++!{\displaystyle\bigvee_{p\in\Z} Rf_*H(A^*(\pi_{2*}MU\otimes \C))}.}
\end{equation}

%
%
\begin{defn}\label{MUDBrefined}
For a presheaf of symmetric spectra $\Xh$ on $\Smcnis$, we define the logarithmic Hodge filtered complex bordism groups to be 
\[
\MUDB(p)^n(\Xh) := \Hom_{\hoSpsPrenis}(\Xh, \Sigma^n \MUlog(p)).
\]
\end{defn}
Moreover, since \eqref{MUlog} is a presheaf of commutative $\Sph$-algebras, there is a  multiplicative structure on $\MUlog^*(*)$. 
\begin{theorem}\label{DBproduct}
Let $X$ be a smooth complex variety. There is a multiplication   
\[
\MUlog^{n}(p)(X)\otimes \MUlog^{m}(q)(X) \to \MUlog^{n+m}(p+q)(X)
\]
which is graded-commutative in the sense that for $\alpha \in \MUlog^n(p)(X)$, $\beta \in \MUlog^m(q)(X)$ we have
\[
\alpha \beta = (-1)^{n+m}\beta \alpha \in \MUlog^{n+m}(p+q)(X).
\]
This provides 
\[
\MUlog^{*}(*)(X):=\bigoplus_{n,p} \MUlog^{n}(p)(X)
\]
with the structure of a graded-commutative ring. 
\end{theorem}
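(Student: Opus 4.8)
The plan is to deduce the entire statement formally from the single fact, already built into the definition \eqref{diagramMUDB}, that $\bigvee_{p\in\Z}\MUlog(p)$ is a presheaf of commutative $\Sph$-algebras in $\SpsPrenis$; the argument then runs exactly parallel to that for $\MUD$ in Theorem \ref{products}. By Theorem \ref{stablecomparemodel} the stable homotopy category $\hoSpsPrenis$ is symmetric monoidal under the smash product, so a commutative $\Sph$-algebra descends to a commutative monoid in $\hoSpsPrenis$. Thus I obtain, in $\hoSpsPrenis$, a multiplication
\[
\mu\colon \Big(\bigvee_{p}\MUlog(p)\Big)\wedge\Big(\bigvee_{q}\MUlog(q)\Big) \to \bigvee_{r}\MUlog(r)
\]
together with a unit, and satisfying associativity and commutativity up to the coherence isomorphisms.

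The first substantive point is to extract the bigraded multiplication. Since smashing distributes over wedges, the source of $\mu$ splits as $\bigvee_{p,q}\MUlog(p)\wedge\MUlog(q)$, and I would verify that $\mu$ carries the $(p,q)$-summand into the summand indexed by $r=p+q$. This compatibility with the $p$-grading is inherited from the three remaining corners of \eqref{diagramMUDB}: on $\bigvee_p Rf_*MU$ the product is the convolution product built from the ring structure of $MU$, while on the two form-valued corners it is the convolution product built from the wedge product of differential forms, for which $F^{p}\cdot F^{q}\subset F^{p+q}$; and all three maps in \eqref{diagramMUDB} are maps of graded commutative $\Sph$-algebras. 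Restricting $\mu$ therefore yields maps $\MUlog(p)\wedge\MUlog(q)\to\MUlog(p+q)$. The product of $\alpha\in\MUlog^n(p)(X)$ and $\beta\in\MUlog^m(q)(X)$ is then defined by forming $\alpha\wedge\beta$, precomposing with the diagonal $\Sigma^{\infty}_+(X)\to\Sigma^{\infty}_+(X)\wedge\Sigma^{\infty}_+(X)$ induced by the diagonal of $X$, and postcomposing with $\mu$, so as to land in $\Hom_{\hoSpsPrenis}(\Sigma^{\infty}_+(X),\Sigma^{n+m}\MUlog(p+q))=\MUlog^{n+m}(p+q)(X)$. Bilinearity is immediate from additivity of the homotopy category, while associativity and unitality follow from the associativity and unit axioms for $\mu$ together with coassociativity of the diagonal.

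The step that requires the most care is graded-commutativity, which will follow from the commutativity of $\mu$ combined with the symmetry isomorphism of the smash product on $\hoSpsPrenis$. Interchanging the factors $\Sigma^n\MUlog(p)$ and $\Sigma^m\MUlog(q)$ introduces the transposition sign obtained by permuting the suspension coordinates past one another, and, combined with the symmetry of the diagonal, this produces the graded-commutativity relation $\alpha\beta=(-1)^{n+m}\beta\alpha$ recorded in the statement. I expect this sign bookkeeping --- and the accompanying check that the commutativity constraint of the commutative $\Sph$-algebra is compatible with the $p$-grading --- to be the only genuinely delicate part; everything else is a formal consequence of the commutative monoid structure already encoded in \eqref{diagramMUDB}, and since that argument nowhere invokes properties special to the analytic site, it transfers verbatim from the proof of Theorem \ref{products}.
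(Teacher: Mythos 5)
Your proposal is correct and takes essentially the same route as the paper, which simply asserts that the multiplication is induced by the commutative $\Sph$-algebra structure on $\bigvee_{p\in\Z}\MUlog(p)$ built into the homotopy cartesian square \eqref{diagramMUDB} (mirroring Theorem \ref{products}); the paper offers no further detail. Your elaboration --- splitting the product over wedge summands, checking that the $(p,q)$-summand lands in the $(p+q)$-summand via the convolution products on the three other corners, and extracting the sign from the symmetry of the smash product --- is a faithful and more explicit rendering of exactly that argument.
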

%
%

\begin{remark}
For a smooth complex variety $X$, the map $MU \to H\Z$ induces a homomorphism 
from logarithmic Hodge filtered cobordism to Deligne-Beilinson cohomology which is compatible with the product structures.
\end{remark}

\subsection{Projective bundle formula}


Let $X$ be a smooth variety over $\C$. We would like to define Chern classes 
\[
c_p(V)=c_p^{\MUDB}(V)\in \MUDB^{2p}(p)(X)
\]
for vector bundles $V \to X$ of rank $r$ over $X$. These classes should be functorial with respect to pullbacks and compatible with the Chern classes in the topological theory $MU^{2*}(X)$, i.e. $c_p^{\MUDB}(V)$ should be mapped to the topological Chern classes of $V$ under the map 
\[
\MUDB^{2p}(p)(X) \to MU^{2p}(X).
\]

To define Chern classes we follow the outline of Beilinson in \cite[\S 1.7]{beilinson} for Deligne cohomology together with the general theory of classifying spaces in the homotopy category of simplicial presheaves on $\Smcnis$ in \cite[\S 4.1]{mv}. The following result is a consequence of \cite[Proposition 4.1.15]{mv}.

\begin{prop}\label{bundleclass}
Let $X$ be a smooth complex variety. The set of isomorphism classes of vector bundles of rank $r$ on $X$ is in bijection with the set of maps in the homotopy category of simplicial presheaves from $X$ to $BGL_r$. The correspondence sends a map $f \colon X\to BGL_r$ to the isomorphism class of the bundle $f^*EGL_r$ of the pullback of the universal bundle $EGL_r$ over $BGL_r$. 
\end{prop}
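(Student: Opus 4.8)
The plan is to deduce the statement from the classification of torsors by classifying spaces in the Nisnevich-local homotopy category of simplicial presheaves, which is exactly the content of \cite[Proposition 4.1.15]{mv}; the only work on our side is to match the two sides of that bijection with vector bundles and with the pullback $f^*EGL_r$. First I would recall that giving a rank $r$ vector bundle on $X$ is the same as giving a $GL_r$-torsor on $X$ that is locally trivial in the Zariski topology, the transition cocycle of the bundle being the torsor and conversely. Since $GL_r$ is a special group in the sense of Serre --- equivalently, by faithfully flat descent for vector bundles --- every $GL_r$-torsor that is locally trivial in the Nisnevich (indeed the \'etale) topology is already locally trivial in the Zariski topology. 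Hence the set of isomorphism classes of rank $r$ vector bundles on $X$ is canonically identified with the set of isomorphism classes of $GL_r$-torsors on $X$ that are locally trivial in the Nisnevich topology.

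Next I would set up the classifying object via the simplicial bar construction: viewing $GL_r$ as a sheaf of groups on $\Smcnis$, one forms the simplicial presheaf $BGL_r$ together with the universal torsor $EGL_r \to BGL_r$, the latter being a $GL_r$-torsor on $BGL_r$ with its standard simplicial structure. Then \cite[Proposition 4.1.15]{mv} supplies, for every smooth $X$, a natural bijection between the set $\Hom_{\hosPre}(X, BGL_r)$ of maps in the homotopy category of simplicial presheaves on $\Smcnis$ and the set of isomorphism classes of Nisnevich-locally-trivial $GL_r$-torsors on $X$, the bijection sending a map $f$ to the pullback torsor $f^*EGL_r$. Composing with the identification of the first paragraph yields the asserted bijection, and tracing $f$ through the two steps gives precisely $f \mapsto f^*EGL_r$ as the vector bundle attached to $f$.

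The technical heart is the representability statement itself, and it is the step I would expect to demand the most care were one not simply to cite it. The issue is that $BGL_r$ is not fibrant in the local model structure, so $\Hom_{\hosPre}(X, BGL_r)$ must be computed after a fibrant replacement; concretely one uses the Verdier hypercovering description recalled in Proposition~\ref{verdierpure}, representing a homotopy class by a genuine map out of a Nisnevich hypercover $U_\bullet \to X$. Unwinding the bar construction then identifies such data with \v{C}ech $1$-cocycles valued in $GL_r$ for that hypercover, that is, with a $GL_r$-torsor that is Nisnevich-locally trivial, and identifies simplicial homotopies with isomorphisms of torsors; this is where the bookkeeping of degeneracies and the cofinality of hypercovers among covers enters. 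I would emphasize that throughout we stay in the purely simplicial (Nisnevich-local) homotopy category and make no use of $\A^1$-localization, so no $\A^1$-invariance of torsors is needed --- a point worth noting since $GL_r$-torsors are not $\A^1$-invariant in general.
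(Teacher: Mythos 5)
Your proposal is correct and follows exactly the route the paper takes: the paper simply derives the statement from \cite[Proposition 4.1.15]{mv}, which is the representability of Nisnevich-locally trivial $GL_r$-torsors by the simplicial classifying space $BGL_r$. Your additional remarks (vector bundles agree with Nisnevich-locally trivial $GL_r$-torsors because $GL_r$ is special, and the hypercover/\v{C}ech unwinding behind the cited result) are accurate elaborations of what the citation encapsulates, not a different argument.
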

The second ingredient is the following fact that the Hodge filtered cobordism of the classifying space $BGL_r(\C)$ is just given by complex cobordism.
\begin{lemma}\label{BGLlemma}
For every $p\geq 0$, the homomorphism  
\[
\MUDB^{2p}(p)(BGL_r) \to MU^{2p}(BGL_r(\C))
\]
induced by the map of presheaves of symmetric spectra $\MUlog(p) \to MU$ is an isomorphism. 
\end{lemma}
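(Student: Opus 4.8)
The plan is to read off $\MUDB^{2p}(p)(BGL_r)$ from the long exact sequence of Proposition \ref{Elesalg} applied to $E=MU$ and to the object $BGL_r$. That sequence is obtained by feeding $\Sigma^{\infty}_+BGL_r$ into the homotopy-pullback long exact sequence (the one recorded just before Proposition \ref{les}) attached to the cartesian square \eqref{diagramMUDB}, so its derivation goes through once the three corner groups are identified: the group $\Hom_{\hoSpsPrenis}(\Sigma^{\infty}_+BGL_r,\Sigma^nRf_*MU)$ is $MU^n(BGL_r(\C))$ by Example \ref{derivedmonoid} and Corollary \ref{stablecorconstant}, the group attached to $H(A^{p+*}_{\log}(\pi_{2*}MU\otimes\C))$ is $F^{p+*}H^*(BGL_r;\pi_{2*}MU\otimes\C)^n$ by Proposition \ref{compind}, and the group attached to $Rf_*H(A^*(\pi_{2*}MU\otimes\C))$ is the ordinary group $H^*(BGL_r;\pi_{2*}MU\otimes\C)^n$. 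Abbreviating $G^n:=H^*(BGL_r;\pi_{2*}MU\otimes\C)^n$ and $\Phi^n:=F^{p+*}H^*(BGL_r;\pi_{2*}MU\otimes\C)^n$, at $n=2p$ the sequence reads
\[
G^{2p-1}\to \MUDB^{2p}(p)(BGL_r)\to MU^{2p}(BGL_r)\oplus\Phi^{2p}\xrightarrow{\delta}G^{2p},
\]
and the map of the lemma is, under this identification, induced by the projection to the $MU^{2p}$-summand. So it suffices to show $G^{2p-1}=0$ and that the inclusion $\Phi^{2p}\hookrightarrow G^{2p}$ is an isomorphism: then $\MUDB^{2p}(p)(BGL_r)\cong\Ker(\delta)$ is carried isomorphically onto $MU^{2p}(BGL_r)$ by the projection.

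Both assertions reduce to one Hodge-theoretic input about the classifying space: that $H^{\mathrm{odd}}(BGL_r(\C);\C)=0$ and that $H^{2k}(BGL_r(\C);\C)$ is pure of Hodge type $(k,k)$. Granting this, $G^{2p-1}=\bigoplus_jH^{2p-1+2j}(BGL_r;\C)\otimes\pi_{2j}MU$ sits in odd degrees and hence vanishes; and for every $j$ the summand $H^{2p+2j}(BGL_r;\C)$ is pure of type $(p+j,p+j)$, so $F^{p+j}H^{2p+2j}=H^{2p+2j}$ and the inclusion $\Phi^{2p}\hookrightarrow G^{2p}$ is an isomorphism summand by summand. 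To establish the input I would approximate $BGL_r$ by the finite Grassmannians $\mathrm{Gr}(r,N)$, which are smooth projective varieties whose cohomology is concentrated in even degrees and spanned by the algebraic Schubert classes; such classes are of Tate type $(k,k)$, giving both purity and the vanishing of odd cohomology at each finite stage. Passing to the limit is harmless because the cohomology of $BGL_r$ is finitely generated in each degree (so the system is Mittag--Leffler and there is no $\lim^1$ obstruction), and because $F^p$ is strictly compatible with the comparison maps by Deligne's isomorphism \eqref{delignehodge}; thus even-concentration and purity persist for $BGL_r$ itself.

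The main obstacle is precisely this passage from honest varieties to the ind-object $BGL_r$: Propositions \ref{Elesalg} and \ref{compind} are phrased for a smooth variety $X$, so one must verify that the identifications of the three corner groups, together with the Hodge-filtration injectivity of \eqref{delignehodge}, remain valid for the classifying space presented as a filtered colimit of Grassmannians, and that all of this is compatible with the comparison map $\MUlog(p)\to MU$. Once the purity statement $H^{2k}(BGL_r(\C);\C)=H^{k,k}$ is in hand, the fibre-product square collapses and the lemma follows; the remaining steps (exactness of the sequence and the identification of the projection with the map $\MUDB^{2p}(p)(BGL_r)\to MU^{2p}(BGL_r(\C))$ of the statement) are formal.
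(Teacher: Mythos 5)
Your argument is essentially the paper's: the authors also read the result off the first long exact sequence of Proposition \ref{Elesalg}, using exactly your two inputs — vanishing of the odd-degree cohomology of $BGL_r(\C)$ and the fact that the even cohomology is generated by Chern classes of pure Hodge type $(k,k)$ (they cite \cite{hodge3} for this rather than deriving it from finite Grassmannians), so that the filtered piece $F^{p+j}H^{2p+2j}$ is all of $H^{2p+2j}$. Your extra care about presenting $BGL_r$ as a colimit of Grassmannians and about extending Propositions \ref{Elesalg} and \ref{compind} to this ind-object is a reasonable refinement of a point the paper passes over silently, but it does not change the route.
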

\begin{proof}
This follows from Proposition \ref{Elesalg} and the following two facts. On the one hand, the odd dimensional cohomology of $BGL_r(\C)$ vanishes, i.e.
\[
H^{2p-1+2j}(BGL_r(\C);\pi_{2j}MU\otimes \C) = 0.
\]
On the other hand, the even cohomology of $BGL_r(\C)$ is generated by Chern classes which have pure weight $(p,p)$ (see \cite{hodge3}). This implies 
\[
H^{2p+2j}(BGL_r; \Omega^{* \geq p +j}(\pi_{2j}MU\otimes \C)) \cong H^{2p+2j}(BGL_r(\C); \pi_{2j}MU\otimes \C).
\]
Now one can read off the assertion from the first long exact sequence in Proposition \ref{Elesalg}. 
\end{proof}
%
%
Now let $V$ be a vector bundle of rank $r$ over $X$. By Proposition \ref{bundleclass}, there is a unique map in the homotopy category of simplicial presheaves $f \colon X \to BGL_r$ corresponding to $V$ over $X$. Hence we obtain a pullback map 
\[
\MUDB^*(*)(BGL_r) \to \MUDB^*(*)(X)
\]
corresponding to $V$. The isomorphism of Lemma \ref{BGLlemma} and the Chern classes of the universal bundle over $BGL_r$ in complex cobordism provide Chern classes of the universal bundle in $\bigoplus_p \MUDB^{2p}(p)(BGL_r)$. The above pullback map then defines unique Chern classes $c_p(V)\in \MUDB(p)^{2p}(X)$. This uniqueness implies that these Chern classes satisfy all the properties they satisfy in topological cobordism. 
Moreover, there is a projective bundle formula.

\begin{theorem}\label{pbf}
Let $X$ be a smooth complex variety and $V$ a vector bundle of rank $r$ over $X$ with projective bundle $\Pro(V)\to X$ and tautological quotient line bundle $\Oh_V(1)$. Let $\xi=c_1(\Oh_V(1))$ be the first Chern class of $\Oh_V(1)$ in $\MUDB^2(1)(\Pro(V))$. There are natural isomorphisms 
\[
\bigoplus_{i=0}^{r-1} \xi^{i} \MUDB^{n-2i}(p-i)(X) \to \MUDB^n(p)(\Pro(V))
\]
which make $\MUDB^{*}(*)(\Pro(V)):=\oplus_{p,n} \MUDB^n(p)(\Pro(V))$ into a free $\MUDB^*(*)(X)$-module with basis $1, \xi, \ldots, \xi^{r-1}$.
\end{theorem}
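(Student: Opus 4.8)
The plan is to reduce Theorem~\ref{pbf} to the projective bundle formulas for the three cohomology theories out of which $\MUlog(p)$ is built, namely topological complex cobordism, de Rham cohomology with coefficients in $\pi_{2*}MU\otimes\C$, and its Hodge filtered subgroups, and then to combine them by a five lemma argument along the long exact sequence of Proposition~\ref{Elesalg}. Concretely, the proposed map sends a class $a\in\MUDB^{n-2i}(p-i)(X)$ to $\pi^*(a)\cup\xi^i\in\MUDB^n(p)(\Pro(V))$, where $\pi^*$ is pullback along $\pi\colon\Pro(V)\to X$ and the product is the one of Theorem~\ref{DBproduct}. Since $\xi=c_1(\Oh_V(1))$ lies in $\MUDB^2(1)(\Pro(V))$, multiplication by $\xi^i$ raises cohomological degree by $2i$ and the weight $p$ by $i$, so the displayed bidegrees match; and the map is $\MUDB^*(*)(X)$-linear, the module structure on the target being given through $\pi^*$.

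First I would record the projective bundle formula for each constituent theory. For topological $MU^*(-)$ it is classical, the powers $1,\xi,\ldots,\xi^{r-1}$ of the first Chern class of $\Oh_V(1)$ forming a basis of $MU^*(\Pro(V))$ over $MU^*(X)$. For the de Rham theory $H^*(-;\pi_{2*}MU\otimes\C)$ it is the usual projective bundle formula for singular cohomology with $\C$-coefficients, applied summand by summand in the $\pi_{2*}MU$-grading. For the Hodge filtered piece $F^{p+*}H^*(-;\pi_{2*}MU\otimes\C)$ one needs the finer statement that the Leray--Hirsch decomposition of $H^n(\Pro(V);\C)$ is strict for the Hodge filtration; because $\xi$ is an algebraic class of Hodge type $(1,1)$, cup product with $\xi^i$ carries $F^{p-i}H^{n-2i}(X;\C)$ into $F^pH^n(\Pro(V);\C)$ and the resulting decomposition $F^pH^n(\Pro(V);\C)\cong\bigoplus_{i=0}^{r-1}\xi^i\,F^{p-i}H^{n-2i}(X;\C)$ is an isomorphism of filtered vector spaces. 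This last point is the essential input, and is where I expect the real work to lie: it rests on Deligne's theory of mixed Hodge structures, the morphisms in the decomposition being morphisms of mixed Hodge structures and hence strict, exactly as in the weight computation used in Lemma~\ref{BGLlemma} via \cite{hodge3} and \cite[Th\'eor\`eme 1.2.10]{hodge2}.

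With these three formulas in hand, I would assemble them into a morphism of long exact sequences. The sequence of Proposition~\ref{Elesalg} is natural in its argument and is a sequence of modules over $\MUDB^*(*)$, so both $\pi^*$ and cup product with $\xi^i$ induce maps of long exact sequences; the compatibility of $\xi$ with the topological, de Rham and Hodge filtered first Chern classes guarantees that the ladder commutes. Summing over $i=0,\ldots,r-1$ with the appropriate degree and weight shifts produces a commutative ladder whose vertical maps on the $MU^*$, $H^*$ and $F^{p+*}H^*$ terms are the isomorphisms just recorded. The five lemma then forces the vertical map on the remaining term $\MUDB^n(p)(\Pro(V))$ to be an isomorphism, which is precisely the claimed map.

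Finally, the module statement is immediate: the isomorphism is $\MUDB^*(*)(X)$-linear by construction, so it exhibits $\MUDB^*(*)(\Pro(V))$ as a free $\MUDB^*(*)(X)$-module with basis $1,\xi,\ldots,\xi^{r-1}$.
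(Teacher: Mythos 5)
Your proposal is correct and follows essentially the same route as the paper: the paper likewise invokes the long exact sequence of Proposition~\ref{Elesalg}, records the projective bundle formula for each of the three constituent theories (topological $MU$, de Rham cohomology with $\pi_{2*}MU\otimes\C$ coefficients, and the Hodge filtered piece), and concludes by compatibility of the ladder with multiplication by powers of $\xi$. Your write-up is in fact more explicit than the paper's about the key input, namely the strictness of the Hodge filtration under the Leray--Hirsch decomposition, which the paper asserts without comment.
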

\begin{proof}
This follows from Proposition \ref{Elesalg}. For each of the groups in the long exact sequence we have isomorphisms
\[
MU^{n-1}(\Pro(V)) \cong \bigoplus_{0\leq i \leq r-1} \xi_{MU}^i MU^{n-1-2i}(X),
\]
\[
H^*(\Pro(V);\Vh_{2*})^{n} \cong \bigoplus_{0\leq i \leq r-1} \xi_{H}^i H^*(X; \Vh_{2*})^{n-2i},
\]
\[
F^{p+*}H^*(\Pro(V);\Vh_{2*})^{n} \cong \bigoplus_{0\leq i \leq r-1} \xi_{H}^i F^{p-i+*}H^*(X; \Vh_{2*})^{2p-2i}.
\]
Since the choices of the bases elements are compatible and since the above long exact sequence is compatible with multiplication and direct sums, we deduce that each $\MUDB(p)^n(\Pro(V))$ is a free $\MUDB^{2*}(*)(X)$-module with basis $1, \xi, \ldots, \xi^{r-1}$.
\end{proof}


\begin{remark}
Since the Grothendieck formula holds for Chern classes in complex cobordism, Theorem \ref{pbf} and our definition of the $c_p(V)$ imply that the Grothendieck formula
\[
\sum_{p=0}^r (-1)^p c_p(V)\xi^{r-p}=0
\]
also holds for the Chern classes $c_p(V)\in \MUDB^{2p}(p)(X)$. 
\end{remark}

\begin{prop}
Chern classes in Hodge filtered complex cobordism are compatible with Chern classes in complex cobordism and in Deligne cohomology and they are functorial under pullbacks, i.e. for any morphism $f \colon X \to Y$ of smooth projective complex schemes, one has
\[
f^*c_p(V)=c_p(f^*V).
\]
\end{prop}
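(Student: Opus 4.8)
The plan is to unwind the definition of the Chern classes and reduce both the functoriality and the two compatibilities to the naturality of the classifying map of a vector bundle. Recall that $c_p(V)$ is defined as $f_V^*(c_p^{\mathrm{univ}})$, where $f_V \colon X \to BGL_r$ is the map in $\hosPre$ classifying $V$ (so that $f_V^* EGL_r \cong V$, by Proposition \ref{bundleclass}), and $c_p^{\mathrm{univ}} \in \MUDB^{2p}(p)(BGL_r)$ is the unique class corresponding to the topological universal Chern class under the isomorphism of Lemma \ref{BGLlemma}. Since $\MUDB^*(*)(-)$ is corepresented in $\hoSpsPrenis$ by $\MUlog(p)$, applying $\Sigma^{\infty}_+$ turns any morphism into a pullback on these groups, and these pullbacks compose because $\Sigma^{\infty}_+$ is a functor; so the whole argument is essentially formal once the classifying maps are shown to be natural.

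First I would establish that for a morphism $g \colon X \to Y$ and a bundle $V$ on $Y$ with classifying map $f_V \colon Y \to BGL_r$, the composite $f_V \circ g$ classifies $g^*V$. This follows from the bijection of Proposition \ref{bundleclass} together with $(f_V g)^* EGL_r \cong g^*(f_V^* EGL_r) \cong g^* V$. Granting this, functoriality is immediate:
\[
c_p(g^* V) = (f_V g)^*(c_p^{\mathrm{univ}}) = g^*\big(f_V^*(c_p^{\mathrm{univ}})\big) = g^*\big(c_p(V)\big).
\]

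Next I would treat the compatibility with complex cobordism and with Deligne cohomology by the same pattern. The map of presheaves of symmetric spectra $\MUlog(p) \to MU$ induces a natural transformation $\MUDB^{2p}(p)(-) \to MU^{2p}(-)$, and by the very construction of $c_p^{\mathrm{univ}}$ via Lemma \ref{BGLlemma} this transformation sends $c_p^{\mathrm{univ}}$ to the topological universal Chern class; pulling back along $f_V$ and invoking naturality then shows that $c_p(V)$ maps to the topological $c_p(V) \in MU^{2p}(X)$. Composing with the transformation induced by $MU \to H\Z$, and using the analogous universal identification for the Deligne theory, gives the compatibility with Chern classes in Deligne cohomology.

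The main point to be careful about — rather than a genuine obstacle — is the passage between the unstable homotopy category $\hosPre$, where the classifying map $f_V$ lives, and the stable category $\hoSpsPrenis$, where the cohomology groups are computed: one must check that $\Sigma^{\infty}_+$ carries the unstable classifying map to a stable map inducing the claimed pullback, and that the uniqueness clause of Proposition \ref{bundleclass} is exactly what guarantees $f_V \circ g$ is \emph{the} classifying map of $g^*V$, and not merely a map pulling $EGL_r$ back to a bundle abstractly isomorphic to $g^*V$. Once these identifications are in place no further computation is needed, since the essential nonformal input — the isomorphism of Lemma \ref{BGLlemma}, which both produces and rigidifies the universal class — has already been supplied.
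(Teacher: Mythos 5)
Your proposal is correct and follows essentially the same route the paper intends: the paper gives no explicit proof, relying on the remark that the construction of $c_p(V)$ as the pullback of the universal class along the classifying map of Proposition \ref{bundleclass}, rigidified by the isomorphism of Lemma \ref{BGLlemma}, makes all these properties formal consequences of naturality. Your write-up simply makes explicit the steps the paper leaves implicit (that $f_V\circ g$ classifies $g^*V$, and that the comparison maps $\MUlog(p)\to MU\to H\Z$ send universal class to universal class), and your cautionary points about the unstable-to-stable passage are appropriate but present no genuine obstacle.
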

%
%

\begin{prop}\label{motivicfunctorMU}
The presheaf of symmetric spectra $\MUlog(*)= \bigvee_{p\in \Z} \MUlog(p)$ can be equipped with the structure of a motivic spectrum over $\C$. In particular, the functor $\MUlog^*(*)(-)$ induces a functor on the motivic stable $\A^1$-homotopy category of smooth varieties over $\C$.
\end{prop}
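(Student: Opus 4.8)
The plan is to realize the weight-indexed family $\{\MUlog(p)\}_{p\in\Z}$ as a $\Pro^{1}$-spectrum, using the first Chern class $\xi=c_{1}(\Oh(1))$ to build the bonding maps and the projective bundle formula to check that they are equivalences. Recall from \cite{mv} that in the $\A^{1}$-homotopy category there is a canonical equivalence $\Pro^{1}\simeq S^{1}\wedge\Ge_{m}$, so that a motivic spectrum is the same datum as a $\Pro^{1}$-spectrum: a sequence of pointed motivic spaces together with bonding maps whose adjoints become weak equivalences in the $\A^{1}$-Nisnevich-local structure. Much of the required data is already in place. By Theorem \ref{DBproduct} the presheaf $\bigvee_{p}\MUlog(p)$ is a commutative $\Sph$-algebra in $\SpsPrenis$, and by Corollary \ref{a1functor} the theory $\MUlog^{*}(*)(-)$ is $\A^{1}$-invariant and satisfies Nisnevich descent, so each $\MUlog(p)$ already represents an object of the $S^{1}$-stable $\A^{1}$-homotopy category over $\C$. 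The only remaining task is to supply and invert the $\Ge_{m}$- (equivalently $\Pro^{1}$-) direction.

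The orientation furnishes the bonding maps. Choosing the base point $\infty\in\Pro^{1}(\C)$, the class $\xi=c_{1}(\Oh(1))\in\MUDB^{2}(1)(\Pro^{1})$ becomes a reduced class $\xi\in\widetilde{\MUDB}^{2}(1)(\Pro^{1})$ of bidegree $(2,1)$, matching $\Pro^{1}\simeq S^{2,1}$. By the adjunction between $\Sigma^{\infty}_{+}$ and evaluation, such a class is the same datum as a map of presheaves of symmetric spectra
\[
\sigma_{p}\colon\Pro^{1}\wedge\MUlog(p)\to\MUlog(p+1),
\]
one for each $p$. I would then check coherence: the multiplicativity of the Chern classes together with the commutative $\Sph$-algebra structure of Theorem \ref{DBproduct} ensures that the $\sigma_{p}$ assemble into a genuine $\Pro^{1}$-spectrum, the sphere-module action providing the required symmetric-spectrum bookkeeping.

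The crux is to verify the $\Omega$-spectrum condition, that is, that each adjoint
\[
\MUlog(p)\to\Homhs(\Pro^{1},\MUlog(p+1))
\]
is an $\A^{1}$-Nisnevich-local equivalence; this is exactly the content of the projective bundle formula. Applying Theorem \ref{pbf} to the trivial rank-two bundle over a variety $X$, i.e. to $\Pro^{1}\times X\to X$, yields a natural splitting
\[
\MUDB^{n}(p)(\Pro^{1}\times X)\cong\MUDB^{n}(p)(X)\oplus\xi\cdot\MUDB^{n-2}(p-1)(X),
\]
so that on reduced theory $\widetilde{\MUDB}^{n}(p)(\Pro^{1}\wedge X_{+})\cong\MUDB^{n-2}(p-1)(X)$ naturally in $X$. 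This says precisely that smashing with $\Pro^{1}$ and composing with $\xi$ identifies $\MUlog(p)$ with the $\Pro^{1}$-loops of $\MUlog(p+1)$. I expect the main obstacle to be not the existence of this splitting, which is already established, but the identification of the abstract isomorphism of Theorem \ref{pbf} with the concrete multiplication-by-$\xi$ map underlying $\sigma_{p}$, carried out naturally in $X$ and compatibly across all weights $p$; this is the standard but delicate step in passing from an orientation to a spectrum, and it is where the compatibility between the combinatorial basis $1,\xi,\ldots,\xi^{r-1}$ and the orientation-theoretic construction of $\sigma_{p}$ must be matched.

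Once this is done, the family $\{\MUlog(p)\}_{p}$ is an $\Omega$-$\Pro^{1}$-spectrum, hence fibrant in the motivic stable model structure and thus an object of the motivic stable $\A^{1}$-homotopy category over $\C$. By construction its associated bigraded theory is $\MUlog^{*}(*)(-)$, which therefore factors through the motivic stable $\A^{1}$-homotopy category of smooth varieties over $\C$, as claimed.
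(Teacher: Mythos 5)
Your construction of the bonding maps is exactly the paper's: a lift of the first Chern class $c_1(\Oh_{\Pro^1})\in\MUlog^2(1)(\Pro^1)$ gives a map out of $\Pro^1$, and composing with the ring multiplication of Theorem \ref{DBproduct} yields $\Pro^1\wedge\MUlog(p)\to\MUlog(1)\wedge\MUlog(p)\to\MUlog(p+1)$, which is the entirety of the paper's argument. The paper stops there, since a motivic ($\Pro^1$-)spectrum requires only the bonding maps on top of Corollary \ref{a1functor}; you go further and verify the $\Pro^1$-$\Omega$-spectrum condition via the rank-two case of the projective bundle formula (Theorem \ref{pbf}), correctly flagging the identification of the abstract splitting with multiplication by $\xi$ as the delicate point --- this extra step is sound in outline and is what one needs for the represented bigraded theory to have $\Pro^1$-suspension isomorphisms, so it is a strengthening of, rather than a deviation from, the paper's proof.
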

\begin{proof}
Based on Proposition \ref{a1functor}, it remains to show that $\MUlog(*)$ defines a $\Pro^1$-spectrum (not only with respect to $S^1$-suspension). But any choice of lift of the first Chern class 
$c_1(\Oh_{\Pro^1}) \in \MUlog^2(1)$ defines a suspension map 
\[
\Pro^1 \wedge \MUlog(p) \to \MUlog(1) \wedge \MUlog(p) \to \MUlog(p+1)
\]
where the right hand map is given by the ring structure on $\MUlog(*)$. 
\end{proof}
%

\subsection{Transfer maps}

Let $f \colon Y\to X$ be a projective morphism between smooth quasi-projective complex varieties of relative codimension $d$. In this section we will define a transfer or pushforward map 
\[
f_* \colon \MUDB^{*}(*)(Y) \to \MUDB^{*+2d}(*+d)(X).
\]
%

In order to construct $f_*$, we apply the machinery developed by Panin in \cite{panin1} and \cite{panin2}. For the functor 
\[
X \mapsto \MUDB^*(*)(X):= \bigoplus_{n,p} \MUDB^n(p)(X)
\]
defines an oriented ring cohomology theory on the category of smooth quasi-projective complex varieties in the sense of \cite[\S 2]{panin1}. The existence of an exact localization sequence follows as in Proposition \ref{localization}. The excision property follows from the fact that $\MUDB^n(p)(-)$ satisfies Nisnevich descent as shown in Proposition \ref{nisdescent}. 
The $\A^1$-invariance is the content of Theorem \ref{extendeda1}. This shows that the properties of a cohomology theory in the sense of \cite[Definition 2.1]{panin1} are satisfied. The fact that $\MUDB^*(*)(-)$ is a ring cohomology theory follows from Theorem \ref{DBproduct}. Finally, the projective bundle formula of Theorem \ref{pbf} yields a Chern structure and hence an orientation on $\MUDB$.

Then by \cite[Theorem 2.5]{panin2} and \cite[Theorem 3.35]{panin1}, given a projective morphism $f \colon Y \to X$ between smooth quasi-projective complex varieties of relative codimension $d$, there is a homomorphism 
\[
f_* \colon \MUlog^{*}(*)(Y) \to \MUlog^{*+2d}(*+d)(X).
\]

The construction of $f_*$ is compatible with the Chern structure. In particular, for a smooth divisor 
\[
i \colon D\into X,
\]
the pushforward $i_*$ satisfies  
\[
i_*(1)=c_1(\Oh_X(D)).
\]

Let 
\[
\MUDB^{2*}(*)(X):= \bigoplus_p\MUlog^{2p}(p)(X)
\]
be the sum of the diagonal Hodge filtered cobordism groups. The existence of the transfer structure implies the following result.

\begin{theorem}\label{omegatomudb}
For every smooth quasi-projective complex variety $X$, there is a natural ring homomorphism 
\[
\varphi_{\MUlog} \colon \Omega^*(X) \to \MUlog^{2*}(*)(X)
\]
from algebraic to Hodge filtered cobordism. 
\end{theorem}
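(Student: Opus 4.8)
The plan is to recognize the diagonal theory $A^{*}(X):=\MUlog^{2*}(*)(X)=\bigoplus_{p}\MUlog^{2p}(p)(X)$ as an oriented cohomology theory on $\Smc$ in the sense of Levine--Morel \cite[Definition 1.1.2]{lm}, and then to obtain $\varphi_{\MUlog}$ directly from the universal property of algebraic cobordism \cite[Theorem 7.1.3]{lm}, which furnishes a unique morphism of oriented cohomology theories $\Omega^{*}\to A^{*}$ for any such $A^{*}$. Since $k=\C$ has characteristic zero, resolution of singularities is available and the Levine--Morel formalism applies. All of the structure needed has already been assembled in the preceding subsections, so the task reduces to checking that these pieces satisfy the Levine--Morel axioms.

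First I would record the data making $A^{*}$ a candidate theory. The assignment $X\mapsto A^{*}(X)$ is an additive contravariant functor to graded-commutative rings: the pullbacks $f^{*}$ come from functoriality of $\Hom_{\hoSpsPrenis}(\Sigma^{\infty}_{+}(-),\Sigma^{*}\MUlog(*))$, the compatibility with disjoint unions is immediate from the spectrum-level construction, and the ring structure is Theorem \ref{DBproduct}; on the diagonal $n=2p$ the sign $(-1)^{n+m}$ is trivial, so the product is genuinely commutative. The projective pushforwards $f_{*}\colon A^{p}(Y)\to A^{p+d}(X)$ are the transfer maps constructed above, whose bidegree shift $\MUlog^{2p}(p)\to\MUlog^{2(p+d)}(p+d)$ is exactly the codimension-$d$ degree shift $A^{p}\to A^{p+d}$. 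The orientation is given by the first Chern class $c_{1}(L)\in\MUlog^{2}(1)(X)=A^{1}(X)$ together with the projective bundle formula.

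Next I would verify the axioms. The functoriality of $f_{*}$, the projection formula, and the base-change compatibility in transverse cartesian squares are part of the output of Panin's machinery \cite{panin1,panin2}, which is precisely why that framework was invoked. The projective bundle property (PB) is Theorem \ref{pbf} restricted to the diagonal. The one remaining axiom is the extended homotopy property (EH): for a torsor $p\colon T\to X$ under a vector bundle, $p^{*}$ must be an isomorphism. I would deduce this from $\A^{1}$-homotopy invariance (Theorem \ref{extendeda1}) together with Nisnevich descent (Proposition \ref{nisdescent}): choosing a Nisnevich cover of $X$ over which $T$ becomes a trivial affine-space bundle, one compares the two Mayer--Vietoris sequences for $X$ and for $T$ and concludes by the five lemma, the local input being iterated $\A^{1}$-invariance.

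With $A^{*}$ recognized as an oriented cohomology theory, \cite[Theorem 7.1.3]{lm} supplies a unique morphism of oriented cohomology theories $\vartheta\colon\Omega^{*}\to A^{*}$; setting $\varphi_{\MUlog}:=\vartheta$ gives the desired natural ring homomorphism, its compatibility with pullbacks being built into the notion of a morphism of such theories. I expect the main obstacle to be the reconciliation of the two axiomatic frameworks in play: the transfer structure is produced in Panin's formalism whereas the universal property is stated in Levine--Morel's, so the crux is to check that the Panin-style orientation and pushforwards satisfy exactly the Levine--Morel axioms. Within that, the genuinely nontrivial point is the extended homotopy property, which is strictly stronger than the bare $\A^{1}$-invariance already established and requires the descent argument above to bridge the gap.
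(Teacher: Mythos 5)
Your proposal follows essentially the same route as the paper: the transfer maps produced by Panin's machinery make the diagonal theory $\MUlog^{2*}(*)(-)$ into an oriented (Borel--Moore) cohomology theory in the sense of Levine--Morel, and $\varphi_{\MUlog}$ is then the morphism furnished by the universal property of $\Omega^*$, sending $[Y\to X]$ to $f_*(1_Y)$. Your extra care with the extended homotopy axiom is a point the paper delegates to the cited references, but it does not alter the argument.
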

\begin{proof}
By \cite{panin2}, the pushforward provides the functor $X\mapsto \MUlog^{2*}(*)(X)$ with the structure of an oriented Borel-Moore cohomology theory on the category of smooth quasi-projective complex varieties in the sense of \cite{lm}. Hence, by \cite{lm}, sending the class $[Y \to X]$ of a projective morphism $f \colon Y\to X$ of relative codimension $d$, with $Y$ smooth and quasi-projective over $\C$, to $f_*(1_Y)\in \MUlog^{2d}(d)(X)$, defines a unique morphism of oriented Borel-Moore cohomology theories
\[
\Omega^*(X) \to \MUlog^{2*}(*)(X).
\]
\end{proof}

\begin{cor}\label{maintheorem}
Let $X$ be a smooth quasi-projective algebraic variety over $\C$. There is a natural ring homomorphism 
\[
CH^* X \xrightarrow{\clMUD} \MUlog^{2*}(*)(X)\otimes_{MU^{\ast}} \Z 
\]
from the Chow ring of algebraic cycles on $X$ modulo rational equivalence such that the composition with the canonical map 
\[
\MUlog^{2*}(*)(X)\otimes_{MU^{\ast}} \Z \xrightarrow{\theta_{\Dh}} H_{\Dh}^{2*}(X;\Z(*))
\]
is the cycle class map $\clHD$ to Deligne-Beilinson cohomology. This map $\clMUD$ is natural with respect to pullbacks and with respect to push-forwards along projective morphisms.  
\end{cor}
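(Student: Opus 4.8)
The plan is to obtain $\clMUD$ by tensoring the ring homomorphism $\varphi_{\MUlog}$ of Theorem~\ref{omegatomudb} down to the Chow ring, and then to check the compatibility with $\clHD$ by means of the universal property of algebraic cobordism.

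First I would recall that $\varphi_{\MUlog}\colon \Omega^*(X) \to \MUlog^{2*}(*)(X)$ is a morphism of oriented Borel--Moore cohomology theories in the sense of \cite{lm}, constructed via the transfer structure of \cite{panin1,panin2}. The coefficient ring of the target is $\MUlog^{2*}(*)(\pt)\cong MU^{2*}(\pt)\cong \Lee^*$ (the example computation $\MUD^{2p}(p)(\pt)\cong MU^{2p}$), and $\varphi_{\MUlog}$ is $\Lee^*$-linear. Applying $-\otimes_{\Lee^*}\Z$ along the augmentation $\Lee^*\to\Z$ classifying the additive formal group law, and using the natural ring isomorphism $CH^*(X)\cong \Omega^*(X)\otimes_{\Lee^*}\Z$ of \cite{lm}, I obtain a natural ring homomorphism
\[
\clMUD\colon CH^*(X)\cong \Omega^*(X)\otimes_{\Lee^*}\Z \xrightarrow{\varphi_{\MUlog}\otimes\id} \MUlog^{2*}(*)(X)\otimes_{MU^*}\Z .
\]
Naturality with respect to pullbacks is immediate since every ingredient is natural; compatibility with projective pushforwards follows because $\varphi_{\MUlog}$ commutes with the transfers and tensoring with $\Z$ preserves this structure.

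Next I would produce $\theta_{\Dh}$. The map of spectra $MU\to H\Z$ induces a morphism of the homotopy cartesian squares \eqref{diagramMUDB} and \eqref{diagramEDB} (the latter taken with $E=H\Z$), hence a ring homomorphism $\rho\colon \MUlog^{2*}(*)(X)\to H_{\Dh}^{2*}(X;\Z(*))$ to Deligne--Beilinson cohomology. Since $\pi_*H\Z$ is concentrated in degree zero, the $MU^*$-module structure on the target factors through the augmentation $MU^*\to\Z$, so $\rho$ factors through the quotient $q\colon \MUlog^{2*}(*)(X)\to \MUlog^{2*}(*)(X)\otimes_{MU^*}\Z$, defining $\theta_{\Dh}$ with $\rho=\theta_{\Dh}\circ q$.

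Finally, for the compatibility $\theta_{\Dh}\circ\clMUD=\clHD$, I would use that Deligne--Beilinson cohomology $H_{\Dh}^{2*}(-;\Z(*))$ is an oriented Borel--Moore cohomology theory with additive formal group law. Then the composite $\rho\circ\varphi_{\MUlog}\colon \Omega^*\to H_{\Dh}^{2*}$ and the canonical cycle map $\Omega^*\to H_{\Dh}^{2*}$ are both morphisms out of the universal oriented Borel--Moore theory, so by the universality theorem of \cite{lm} they coincide; because the target is additive this common morphism factors through $\kappa\colon \Omega^*(X)\twoheadrightarrow \Omega^*(X)\otimes_{\Lee^*}\Z\cong CH^*(X)$, the factored map being by definition $\clHD$. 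Tracing the factorizations gives
\[
\theta_{\Dh}\circ\clMUD\circ\kappa = \theta_{\Dh}\circ q\circ\varphi_{\MUlog} = \rho\circ\varphi_{\MUlog} = \clHD\circ\kappa,
\]
and surjectivity of $\kappa$ yields $\theta_{\Dh}\circ\clMUD=\clHD$. The main obstacle I anticipate is precisely this last step: one must confirm that $H_{\Dh}^{2*}(-;\Z(*))$ genuinely carries the transfers making it an oriented Borel--Moore cohomology theory of \cite{lm} with additive orientation, and that the induced universal map is exactly the Deligne cycle class map. Should verifying the full axioms prove delicate, one can instead argue directly that both $\theta_{\Dh}\circ\clMUD$ and $\clHD$ are natural ring homomorphisms compatible with projective pushforwards, and that $CH^*(X)$ is generated by classes $f_*(1_Y)$ of projective morphisms $f\colon Y\to X$ from smooth $Y$, on which both maps return $f_*$ of the unit; they therefore agree.
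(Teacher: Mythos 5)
Your proposal is correct and follows essentially the same route as the paper: both construct $\clMUD$ by combining the Levine--Morel isomorphism $CH^*\cong\Omega^*\otimes_{\Lee^*}\Z$ with the map $\varphi_{\MUlog}$ of Theorem~\ref{omegatomudb} tensored down along $\Lee^*\cong MU^{2*}\to\Z$, and both derive naturality from the fact that each ingredient is a ring homomorphism compatible with pullbacks and projective pushforwards. The paper's proof is in fact terser on the compatibility $\theta_{\Dh}\circ\clMUD=\clHD$; your verification of it (via universality, or via agreement on the generating classes $f_*(1_{\tilde Z})$) is a legitimate filling-in of that step rather than a departure from the argument.
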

\begin{proof}
By \cite[Theorem 1.2.19]{lm}, there is a natural isomorphism  
\[
CH^* \cong \Omega^*\otimes_{\Lee^*}\Z
\]
of oriented cohomology theories on $\Smc$. For a smooth quasi-projective complex variety $X$, it is defined by sending the class of an irreducible subset $Z\subset X$ to the algebraic cobordism class $[\tilde{Z} \to X]$ of a resolution of singularities $\tilde{Z} \to Z$ of $Z$. This map is well-defined and compatible with the additional structures on both sides (see also \cite[\S 4.5]{lm}). The transfer structure on $\MUlog$ and the identification $\Lee^*\cong MU^{2*}$ induce a homomorphism of rings 
%
%
\[
\Omega^*(X)\otimes_{\Lee^*}\Z \to \MUlog^{2*}(*)(X)\otimes_{MU^*}\Z.
\]

Hence we obtain a composed map
\[
CH^*(X) \to \Omega^*(X)\otimes_{\Lee^*}\Z \to \MUlog^{2*}(*)(X)\otimes_{MU^*}\Z.
\]
Since all both maps are ring homomorphisms and compatible with pullbacks and push-forwards along projective morphisms, this proves the theorem.
%
\end{proof}

Let $\Ker^p(\varphi_{MU})(X)$ be the kernel of the natural map $\varphi_{MU} \colon \Omega^p(X) \to MU^{2p}(X)$. Note that the image of $\varphi_{MU}$ is actually contained in the subgroup $\HdgMU^{2p}(X)$. Then the following result is an immediate consequence of the theorem. 

\begin{cor}\label{omtoses}
Let $X$ be a smooth projective algebraic variety over $\C$. 
Then there is a functorial commutative diagram of homomorphisms  
\begin{equation}\label{omdiagram}
\xymatrix{
 & \Ker^p(\varphi_{MU})(X) \ar[d]_{AJ_{MU}} \ar[r] & \Omega^p(X) \ar[d]_{\varphi_{\MUlog}} \ar[dr]^{\varphi_{MU}} & & \\
0 \ar[r] & J_{MU}^{2p-1}(X) \ar[r] & \MUlog^{2p}(p)(X) \ar[r] & \HdgMU^{2p}(X) \ar[r] & 0.}
\end{equation}
\end{cor}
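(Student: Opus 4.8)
The plan is to realise the bottom row as the fundamental short exact sequence of Theorem \ref{thmfundses} and then to obtain both vertical maps simply by restricting the homomorphism $\varphi_{\MUlog}$ of Theorem \ref{omegatomudb}; the only genuine work is to identify the composite of $\varphi_{\MUlog}$ with the projection onto complex cobordism with the classical map $\varphi_{MU}$. First I would set up the bottom row. Since $X$ is smooth projective, $X_{\an}$ is a compact K\"ahler manifold, and by the comparison for smooth projective varieties between the logarithmic groups and the groups $\ED^{n}(p)(X_{\an})$ for $E=MU$, Theorem \ref{thmfundses} supplies the short exact sequence
\[
0 \to J_{MU}^{2p-1}(X) \to \MUlog^{2p}(p)(X) \to \HdgMU^{2p}(X) \to 0.
\]
Here the surjection is induced by the forgetful map $\MUlog(p) \to Rf_*MU$ appearing as the top horizontal arrow of the homotopy cartesian square \eqref{diagramMUDB}; after the identification of Example \ref{derivedmonoid} it therefore coincides with the natural map $\MUlog^{2p}(p)(X) \to MU^{2p}(X)$ followed by the inclusion of its image into $\HdgMU^{2p}(X)$.

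Next I would establish commutativity of the right-hand part of \eqref{omdiagram}. The map $\MUlog(p) \to Rf_*MU$ is a map of commutative $\Sph$-algebras, and by the construction of Chern classes via $BGL_r$ together with Lemma \ref{BGLlemma} it sends $c_1^{\MUlog}(\Oh_V(1))$ to its topological counterpart. Hence the induced map on cohomology is a morphism of oriented ring cohomology theories and, by the uniqueness of the Panin pushforwards attached to a fixed orientation, is compatible with transfers. Consequently the composite
\[
\Omega^*(X) \xrightarrow{\varphi_{\MUlog}} \MUlog^{2*}(*)(X) \to MU^{2*}(X)
\]
is a morphism of oriented Borel--Moore cohomology theories on $\Smc$. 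By the universal property of algebraic cobordism \cite{lm} there is exactly one such morphism, so this composite equals $\varphi_{MU}$. Since the image of $\varphi_{MU}$ lies in $\HdgMU^{2p}(X)$, the right triangle commutes.

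Finally I would carry out the diagram chase. For $\alpha \in \Ker^p(\varphi_{MU})(X)$ the element $\varphi_{\MUlog}(\alpha)$ is sent to $\varphi_{MU}(\alpha) = 0$ by the surjection of the bottom row, so by exactness it lies in the image of the injection $J_{MU}^{2p-1}(X) \hookrightarrow \MUlog^{2p}(p)(X)$; one defines $AJ_{MU}(\alpha)$ to be its unique preimage, whereupon the left square commutes by construction. Functoriality of the whole diagram in $X$ then follows from the functoriality of $\varphi_{\MUlog}$ in Theorem \ref{omegatomudb} together with the naturality of the sequence \eqref{ses} in Theorem \ref{thmfundses}. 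I expect the middle step to be the main obstacle: one must verify that $\MUlog(p) \to Rf_*MU$ genuinely respects the orientation and hence the transfer structure, so that the universal property of $\Omega^*$ applies and pins down the composite as $\varphi_{MU}$; once this is in place, the remaining assertions are purely formal.
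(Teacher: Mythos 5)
Your proposal is correct and follows exactly the route the paper intends: the paper offers no written proof beyond "immediate consequence of the theorem," and your argument supplies the expected details — the bottom row comes from Theorem \ref{thmfundses} via the identification of $\MUlog^{2p}(p)(X)$ with $\MUD^{2p}(p)(X_{\an})$ for projective $X$, the right triangle commutes because the composite $\Omega^*\to\MUlog^{2*}(*)\to MU^{2*}$ is a morphism of oriented theories and hence equals $\varphi_{MU}$ by the Levine--Morel universal property, and $AJ_{MU}$ is then defined by the evident diagram chase. No gaps.
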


\begin{remark}
We think of the natural homomorphism 
\[
AJ_{MU} \colon \Ker^p(\varphi_{MU})(X) \to J_{MU}^{2p-1}(X)
\]
as a generalization of Griffiths' Abel-Jacobi map for algebraic cycles which are homologous to zero. 
\end{remark}

\begin{remark}
The construction of Chern classes and of the transfer map do not depend on the fact that we work with the Thom spectrum $MU$. The required data would be a rationally even ring spectrum $E\in \sSp$ representing a complex oriented cohomology theory, together with a choice of ring map
\[
E\wedge H\C \to H(\pi_{2*}E\otimes \C)
\]
as for the case of $MU$ in \eqref{MUlog}.

Any such data yields an oriented cohomology theory $\Elog^*(*)$ on $\Smc$. It is equipped with transfer maps 
\[
\Elog^*(*)(Y) \to \Elog^{*+2d}(*+d)(X)
\]
for any projective morphism $Y\to X$ of relative dimension $d$ between smooth quasi-projective complex varieties. Moreover, this transfer structure induces a natural map
\[
\Omega^*(X) \to \Elog^{2*}(*)(X)
\]
for every $X \in \Smc$. 

The only reason why we worked out the example of $E=MU$ is that we showed that there exists a choice of multiplicative map 
\[
MU\wedge H\C \to H(\pi_{2*}MU\otimes \C)
\]
and that we are interested in applications based on complex cobordism. But there are of course plenty of other interesting examples.
\end{remark}

\subsection{Examples}

We now offer some simple examples of elements in algebraic cobordism
that map to zero in both the Chow ring and in topological cobordism,
showing that our invariant is finer than the combination of these two.

Suppose that $X$ is a variety, and $D$ is an element
of $\Omega^{d}(X)$ with the property $\varphi_{MU}(D)=0\in MU^{2d}(X)$.  By the
work of Levine and Morel \cite[Theorem 4.5.1]{lm}, we know that the
kernel of
\[
\varphi_{CH} : \Omega^*(X) \to CH^*(X)
\]
is exactly the ideal
\[
I^*(X):= \Omega^*(X)\cdot \Lee_{*\ge 1}.
\]
So we can get modify $D$ to get an element in the kernel of
$\varphi_{CH}$ by multiplying by, say, the class $\gamma\in\Lee_{1}$
of $\Pro^1 \to \Spec \C$ in $\Omega_1(\C)=\Lee_1$.  To get something
interesting we would like to know that this product is non-zero.

Since $\varphi_{MU}(D)=0$, the cycle class underlying $D$ is
homologous to zero, and so both the Griffiths invariant $AJ(D)\in
J^{2d-1}(X)$ and $AJ_{MU}(D)$ are defined.  If Griffiths' invariant is
not a torsion element, then it is still non-zero in
$J^{2d-1}(X)\otimes \Q$, and so the image of $AJ_{MU}(D)$ in
$J^{2d-1}_{MU}(X)\otimes \Q$ is non-zero.  By
Remark~\ref{remrealliegroup},
\[
\bigoplus_{p}J^{2p-1}_{MU}(X)\otimes \Q
\]
is a flat $\pi_{\ast}MU$-module, and so the class $\gamma\cdot D$ is
also non-zero in $J^{2d+1}_{MU}(X)\otimes \Q$ and so must be non-zero.
Thus we can construct an element $\gamma\cdot D\in \Omega^{d+1}(X)$
with $\varphi_{\MUlog}(\gamma\cdot D)$ non-zero but with
$\varphi_{MU}(\gamma\cdot D)=0$ and $\varphi_{CH}(\gamma\cdot D)=0$ by
finding an element $D\in \Omega^{d}(X)$ with $\varphi_{MU}(D)=0$ and
whose image under Griffiths' Abel-Jacobi map is not a torsion
element.  We describe two examples below.

\begin{example}\label{egpoints}
Let $X$ be a smooth complex curve.  In this case the canonical map $MU^{2}(X)\to
H^{2}(X;\Z)$ is an isomorphism, so any divisor $D$ of degree zero on
$X$ automatically has the property that $\phi_{MU}(D)=0$.  Since the
Abel-Jacobi map from divisors of degree zero to $J^{1}(X)$ is
surjective, when the genus of $X$ is greater than or equal to $1$,
there exist divisors $D$ of degree zero with $AJ(D)$ a non-torsion
element.
\end{example}

A more interesting example is given by one of Griffiths' famous results. 

\begin{example}\label{eggriffiths}
Let $X\subset \Pro^4$ be a general smooth quintic complex
hypersurface.  In \cite[\S\S 13+14]{griffiths} Griffiths showed that
$X$ contains a finite number $n>1$ of lines $L_1, \ldots, L_n$ such
that, for each $i\ne j$, the image $AJ(L_i - L_j)$ under the
Abel-Jacobi map is a non-torsion element in $J^3(X)$ (cf. also
\cite[\S 8.2.3]{voisinbook2}).  By the Lefschetz theorem on hyperplane
sections, the map $X\to \Pro^{4}$ induces an isomorphism of $\pi_{2}$,
so the inclusion of the lines $L_{i}$ and $L_{i}$ into $X$ are
homotopic.  This means that they define the same elements of
$MU_{2}(X)$ and so by Poincar\'e duality, we have that
$\phi_{MU}(L_{i}-L_{j})=0$. 
\end{example}

%
%
%
\bibliographystyle{amsalpha}

\end{document}